\newif\ifBio
\newif\ifReview
\newcommand{\Rpp}{\R_{>0}}
\newcommand{\Np}{\N_{>0}}
\newcommand{\R}{\mathbb{R}}
\newcommand{\Rx}{\mathbb{R}^{n_x}}
\newcommand{\Rp}{\mathbb{R}^{n_p}}
\newcommand{\Ru}{\mathbb{R}^{n_u}}
\newcommand{\Z}{\mathbb{Z}}
\newcommand{\N}{\mathbb{N}}
\newcommand{\PPP}{\mathscr{Y}}
\newcommand{\F}{\mathcal{F}}
\newcommand{\J}{\mathcal{J}}
\renewcommand{\L}{\mathscr{L}}
\newcommand{\p}{\bar{p}}
\newcommand{\PP}{\mathscr{P}}
    \newcommand{\rz}[1]{\textcolor{NavyBlue}{#1}}
    \newcommand{\rz}[1]{#1}
\newcommand{\QQ}{Q}
\newcommand{\qq}{q}
\newcommand{\FF}{F}
\newcommand{\f}{\varphi}
\newcommand{\g}{\psi}
\newcommand{\ff}{\phi}
\newcommand{\GG}{G}
\renewcommand{\gg}{g}
\newcommand{\HH}{H}
\newcommand{\hh}{h}
\begin{document}
%
%
\title{BP-MPC: Optimizing the Closed-Loop Performance of MPC using BackPropagation}
\author{Riccardo Zuliani, Efe C. Balta, and John Lygeros
\thanks{Research supported by the Swiss National Science Foundation under NCCR Automation (grant agreement 51NF40\_180545). R. Zuliani and J. Lygeros are with the Automatic Control Laboratory (IfA), ETH Z\"urich, 8092 Z\"urich, Switzerland \texttt{\small$\{$rzuliani,lygeros$\}$@ethz.ch}. E. C. Balta is with Inspire AG, 8005 Z\"urich, Switzerland \& with IfA \texttt{\small efe.balta@inspire.ch}.}}
\maketitle
\begin{abstract}
Model predictive control (MPC) is pervasive in research and industry. However, designing the cost function and the constraints of the MPC to maximize closed-loop performance remains an open problem. To achieve optimal tuning, we propose a backpropagation scheme that solves a policy optimization problem with nonlinear system dynamics and MPC policies. We enforce the system dynamics using linearization and allow the MPC problem to contain elements that depend on the current system state and on past MPC solutions. Moreover, we propose a simple extension that can deal with losses of feasibility. Our approach, unlike other methods in the literature, enjoys convergence guarantees.
\end{abstract}
\begin{IEEEkeywords}
Model Predictive Control, Differentiable Optimization, Backpropagation, Conservative Jacobians.
\end{IEEEkeywords}
\section{Introduction}
%
%
Among optimization-based control schemes, \textit{model predictive control} (MPC) has recently attracted increasing attention in both industry and academia. This technique enables feedback by repeatedly solving a numerical optimization problem at every time-step, each time taking into account the current (measured or estimated) state of the system as well as process and input constraints. Because of its effectiveness in practical applications, researchers have dedicated significant effort to the task of designing MPC controllers. For example, \cite{chen1998quasi} showed that the introduction of an appropriately selected terminal cost can ensure stability and feasibility of the closed-loop. More recently, \cite{di2009model} proposed a design to ensure that the MPC behaves like a linear controller around a specified operating point, with the goal of inheriting the well-known stability and robustness properties of linear controllers. The objective function of an MPC can also be chosen to incentivize learning of an unknown model, as proposed in \cite{soloperto2020augmenting}.

MPC design can be viewed as a policy optimization problem. \textit{Policy optimization} is a well-known problem in reinforcement learning, where the goal is to obtain a control policy that minimizes some performance objective \cite{silver2014deterministic}. In common applications, the policy is parameterized by problem parameters, states, or inputs, and gradient-based techniques are used to learn the optimal parameters. In the context of MPC, the design parameters are generally the cost and the constraints of the problem. The challenge when considering model predictive control policies is that the MPC policy and resulting closed-loop performance are generally not differentiable with respect to the parameters.

Recently, differentiable optimization provided a principled way to overcome the nondifferentiability issue. Specifically, \cite{amos2017optnet} proved that, under certain conditions, the optimizer of a quadratic program (QP) is indeed continuously differentiable with respect to design parameters appearing in the cost and the constraints, and that the gradient can be retrieved by applying the implicit function theorem \cite{dontchev2009implicit} to the KKT conditions of the QP. Since MPC problems are often formulated as QPs, this approach effectively allows for the differentiation of MPC policies. This discovery led to a plethora of applications of differentiable optimization in the realm of model predictive control. For example, \cite{amos2018differentiable} considers the problem of imitation learning, where the tuning parameters are the cost and the model of the linear dynamics of an MPC problem. The idea of utilizing the KKT conditions to obtain derivatives of an optimization problem does not stop with quadratic programs. \cite{oshin2023differentiable} uses the same technique to compute gradients of a nonlinear optimal control problem, and uses this information to conduct online design of a robust model predictive controller. The goal in this case is to match the performance of a nominal controller. Similarly, \cite{cortez2023robust} introduces a predictive safety filter to ensure the safety of the closed-loop operation.

\rz{In a similar fashion, over the course of several papers \cite{gros2019data,zanon2020safe,zanon2020reinforcement}, Gros and Zanon used policy gradient methods to optimize the performance of nonlinear economic MPC. The main idea behind these works is to utilize a nonlinear MPC as a function approximator that can encode both the value and the action-value functions of a given problem. Their algorithm can produce MPC schemes that are stabilizing by construction \cite{gros2019data} and safe against additive disturbances (only in the case of affine systems) \cite{zanon2020safe}. In particular, \cite{zanon2020reinforcement} is the work that we believe is closest to ours, as it uses a linearization-based procedure to avoid solving a nonconvex problem online. However, the authors focus on infinite horizon problems and do not provide a detailed treatment of the convergence of the optimization algorithm. Additionally, the parameter update is performed online (i.e. as the controller is deployed on the system).}

These sophisticated differentiable optimization-based methods rely on the assumption that the optimizer of the MPC problem is continuously differentiable, since the gradient of the optimizer is obtained using the implicit function theorem. It is well known, however, that this may not be the case, and that the optimizer may not be everywhere differentiable even for simple projection problems \cite{bednarczuk2021lipschitz}. The continuous differentiability assumption can be relaxed thanks to the recently developed concept of conservative Jacobians \cite{bolte2021conservative}. \textit{Conservative Jacobians} are set-valued operators that extend the concept of gradients to almost-everywhere differentiable functions. \rz{Similarly to other generalized Jacobians, they obey the chain rule of differentiation and can be used to create first-order optimization schemes with convergence guarantees \cite{davis2020stochastic}. However, unlike e.g. Clarke Jacobians, conservative Jacobians satisfy a nonsmooth implicit function theorem, which is essential in our setting to obtain the sensitivity of the solution maps of the MPC problems \cite{bolte2021nonsmooth}.}

\rz{In this paper, we consider the problem of optimizing the closed-loop trajectory directly by backpropagation. Specifically, we compute the conservative Jacobian of the entire closed-loop trajectory with respect to variations of the design parameters by applying the chain rule to the conservative Jacobians of each MPC problem. We then apply a gradient-based scheme to update the value of the parameter and obtain improved closed-loop performance. This is fundamentally different than optimizing a single MPC step as it accounts for the effect of the receding horizon, where past decisions influece future ones.}

The idea of using backpropagation to improve closed-loop performance of MPC first appeared in \cite{agrawal2020learning} and \cite{agrawal2019differentiable}. These studies focused on linear dynamics without state constraints and did not provide formal convergence guarantees. Our work makes the following contributions.
\begin{enumerate}
    \item We utilize the backpropagation paradigm to solve a nonconvex closed-loop policy optimization problem where the policy is a parameterized MPC. The MPC utilizes a linearized version of the system dynamics to retain convexity.
    \item \rz{We provide conditions under which the closed-loop optimization problem is well posed by extending \cite{amos2017optnet} to the nonsmooth regime, and propose a gradient-based method with convergence guarantees (to a critical point).}
    \item We allow the MPC to have cost and constraints that depend on the current state of the system and on the solution of the MPC problem in the previous time-steps, allowing, for example, the application of the real-time iteration \cite{diehl2005real}.
    \item We propose a simple extension to deal with cases where the MPC scheme loses feasibility and provide conditions under which the closed-loop is guaranteed to converge to a safe operation.
\end{enumerate}

To compute the conservative Jacobian of each optimization problem, we adapt and extend the techniques described in \cite{bolte2022differentiating} to a control theoretic context. Additionally, we derive problem-specific sufficient conditions under which the nonsmooth implicit function theorem in \cite{bolte2022differentiating} can be applied. We finally showcase our findings through simulation on a nonlinear problem.

\rz{Our algorithm can be applied under the assumption that the initial condition of the system is known (this is the case e.g. for iterative control tasks). The work presented in this paper has been recently extended in \cite{zuliani2024closed} to uncertain systems subject to additive noise and with uncertain initial conditions.}

The remainder of this paper is structured as follows. \cref{section:PF} describes the system dynamics, the control policy, and the policy optimization problem. \cref{section:CJ} presents a short recap of conservative Jacobians, their main calculus rules, and a way to minimize such functions with a first-order scheme. \cref{section:DIFF} demonstrates how the conservative Jacobian of an MPC problem can be computed. \cref{section:CLopt} showcases our main algorithmic contribution by describing the backpropagation scheme and the main optimization algorithm. In \cref{section:EXT} we provide some useful extensions to our scheme, such as nonlinear dynamics and recovery from infeasibility. In \cref{section:SIM} we showcase our methods in simulation.

\subsubsection*{Notation}

We use $\N$, $\Z$, $\R$ to denote the set of natural, integer, and real numbers, respectively.
$\Z_{[a,b]}$ is the set of integers $z$ with $a \leq z \leq b$, for some $a \leq b$.
If $C \subset \R^n$ is a convex set, we denote with $P_C$ the orthogonal projector to the set.
Given a matrix $A\in\R^{n \times m}$, we use $\mathbf{r}(A,j)$ to denote the $j$-th row of $A$ (with $j\in\Z_{[1,n]}$). 
We use $A \succ 0 $ ($A \succeq 0$) to indicate that the symmetric matrix $A$ is positive definite (positive semi-definite). We use $A \sim B$ to indicate that $A$ is a function of $B$. $\|\cdot\|$ denotes the $2$-norm, \rz{and $\langle a,b \rangle=a^\top b$ is the Euclidean inner product}.
\section{Problem formulation}\label{section:PF}
\subsection{System dynamics and constraints}\label{subsection:PF_sys}
We consider a nonlinear time-invariant system where the state dynamics are given for each time-step $t\in\N$ by
\begin{align}
\label{eq:PF_system}
\bar{x}_{t+1}= f(\bar{x}_t,\bar{u}_t),
\end{align}
with $f$ locally Lipschitz and $\bar{x}_0\in\Rx$ known. We assume that $(\bar{x}_t,\bar{u}_t)=0$ is an equilibrium for \cref{eq:PF_system}. The \textit{state} $\bar{x}_t\in\Rx$ and the \textit{input} $\bar{u}_t\in\Ru$ are subject to polytopic constraints
\begin{align}\label{eq:PF_constraints}
H_x \bar{x}_t \leq h_x, ~~
H_u \bar{u}_t \leq h_u.
\end{align}
The control input $\bar{u}_t$ is determined, at each time-step, by a parameterized control policy $\pi:\R^{n_x} \times \R^{n_p}\to \R^{n_u}$
\begin{align}
\label{eq:PF_policy}
\bar{u}_t = \pi(\bar{x}_t,p).
\end{align}
The \textit{parameter vector} $p\in\R^{n_p}$ parameterizes the control policy $\pi$ at any state $\bar{x}_t$ \rz{(we refer the reader to \cref{subsection:PF_mpc} for a concrete example of $p$ in the context of MPC)}. We require $p$ to satisfy the constraint $p\in \PP$, for some polytopic set $\PP$. Below, we restrict attention to MPC control policies.

The goal of this paper is to minimize an objective function involving $p$ and the closed-loop state and input trajectory $ (\bar{x},\bar{u}):=(\bar{x}_0,\dots,\bar{x}_{T+1},\bar{u}_0,\dots,\bar{u}_{T})$ for some finite time interval $T\in\N_{>0}$, under the constraints in \cref{eq:PF_constraints}.
\begin{equation}\label{eq:PF_problem}
\begin{split}\begin{aligned}
\operatorname*{minimize}_{\bar{x},\bar{u},\p\in \PP} & \quad \mathcal{C}(\bar{x},\bar{u},p) \\
\mathrm{subject~to} & \quad \bar{x}_{t+1}= f(\bar{x}_t,\bar{u}_t), ~ \bar{x}_0 \text{ given},\\
& \quad \bar{u}_t = \pi(\bar{x}_t,p),\\
& \quad H_x \bar{x}_t\leq h_x, ~ H_u \bar{u}_t\leq h_u,~t \in \Z_{[0,T]},
\end{aligned}
\end{split}
\end{equation}
where $\mathcal{C}:\R^{(T+2)n_x} \times \R^{(T+1)n_u} \times \R^{n_p} \to \R_{\geq 0}$ specifies the performance objective. In \cref{eq:PF_problem}, $T$ should be chosen large enough to reach the desired equilibrium condition. Note that problem \cref{eq:PF_problem} may be non-convex. In the following, for simplicity, we consider the case
\begin{align}\label{eq:PF_norm_cost_cl}
\mathcal{C}(\bar{x},\bar{u},p) = \mathcal{C}(\bar{x}) = \sum_{t=0}^{T} \|\bar{x}_t\|_{Q_x}^2
\end{align}
for some $Q_x\in\R^{n_x \times n_x}$ with $Q_x \succ 0$. Our method can easily be extended to more general cost functions as described in \cref{subsection:EXT_nonconvex_cost}.

\subsection{Model predictive control}\label{subsection:PF_mpc}
In this paper, we restrict attention to MPC policies, where the control input is chosen as the solution of an optimal control problem.  Specifically, after measuring the current state $\bar{x}_t$, we use the knowledge we possess about the system \cref{eq:PF_system} to optimize the future prediction of the state-input trajectories of the system. The predicted trajectories are denoted by $x_t:=(x_{0|t},\dots,x_{N|t})\in\R^{(N+1)n_x}$ and $u_t:=(u_{0|t},\dots,u_{N-1|t})\in\R^{Nn_u}$, where $N\in\Np$, with $N\ll T$, is the \textit{prediction horizon} of the MPC. The initial state is chosen to be equal to the true state of the system, $x_{0|t}=\bar{x}_t$ and, to ensure convexity, we approximate the state dynamics as
\begin{align*}
x_{k+1|t} = A_t x_{k|t} + B_t u_{k|t} + c_t,
\end{align*}
where $A_t$, $B_t$, and $c_t$ are known at runtime and should be chosen to accurately approximate the real dynamics \cref{eq:PF_system} in the vicinity of $\bar{x}_t$. We use $S_t:=(A_t,B_t,c_t)$ to compactly represent the approximate dynamics at time $t$.

Each predicted state and input must satisfy the constraints \cref{eq:PF_constraints}. In addition, we generally impose different constraints on the predicted terminal state $x_{N|t}$
\begin{align}
\label{eq:PF_MPC_terminal}
H_{x,N} x_{N|t} \leq h_N.
\end{align}

The objective function in the MPC is an approximation of the objective in \cref{eq:PF_problem}, given by
\begin{align*}
\|x_{N|t}\|_P^2 + \sum_{k=0}^{N-1} \|x_{k|t}\|_{Q_x}^2 + \|u_{k|t}\|_{R_u}^2,
\end{align*}
where we added a terminal penalty $\|x_{N|t}\|_P^2$ and a penalty on the input, with $P,R_u \succ 0$, to ensure that the problem is strongly convex. The MPC problem that is solved online at each time-step is therefore given as follows.
\begin{align}
\begin{split}\hypertarget{PF_mpc}{}
\operatorname*{min.}_{x_t,u_t} & \quad \|x_{N|t}\|_P^2 + \sum_{k=0}^{N-1} \|x_{k|t}\|_{Q_x}^2 + \|u_{k|t}\|_{R_u}^2\\
\mathrm{s.t.} & \quad x_{k+1|t}=A_tx_{k|t}+B_tu_{k|t}+c_t, ~ x_{0|t} = \bar{x}_t,\\
& \quad H_x x_{k|t} \leq h_x,~ H_u u_{k|t} \leq h_u, ~ k\in\Z_{[0,N-1]},\\
& \quad H_{x,N} x_{N|t} \leq h_{x,N},
\end{split}\label{eq:PF_mpc}
\end{align}
At each time-step, after measuring $\bar{x}_t$ and obtaining $S_t$, we solve \cref{eq:PF_mpc} and choose $\pi(\bar{x}_t,p)=u_{0|t}$, where $u_{0|t}$ is the first entry of the input trajectory. We use $\mathrm{MPC}(\bar{x}_t,S_t,p)$ to denote the function that maps a parameter $p$, a nominal system $S_t$, and an initial condition $\bar{x}_t$ to a control input $\bar{u}_t=u_{0|t}$, so that
\begin{align}\label{eq:PF_policy_mpc}
\pi(\bar{x}_t,p)=\mathrm{MPC}(\bar{x}_t,S_t,p).
\end{align}
Here we treat the terminal cost and the input cost as tunable parameters, by letting $p:=(P,R_u)$. However, with the same framework, one can also choose $p$ as any other element appearing in the cost or in the constraints of \cref{eq:PF_mpc}.

\subsection{A projected gradient-based framework}\label{subsection:PF_gd_framework}
For the time being, we assume that $S_t\equiv S$ and drop it from the notation; we deal with the more complex case where $S_t$ is determined online in \cref{subsection:PF_choosing_st}. Combining problem \cref{eq:PF_problem} with the cost function \cref{eq:PF_norm_cost_cl} and the controller \cref{eq:PF_policy_mpc}, leads to the closed-loop control problem
\begin{align}\label{eq:PF_problem_2}
\begin{split}
\operatorname*{minimize}_{\bar{x},p\in\PP} & \quad \sum_{t=0}^{T} \|\bar{x}_t\|_{Q_x}^2 \\
\mathrm{subject~to} & \quad \bar{x}_{t+1} = f(\bar{x}_t,\mpce),~\bar{x}_0 \text{ given},\\
& \quad H_x \bar{x}_t \leq h_x , ~ t\in\Z_{[0,T]},
\end{split}
\end{align}
Note that we can remove the input constraints in \cref{eq:PF_constraints}, as they are automatically satisfied if the inputs are obtained from the MPC \cref{eq:PF_mpc}. As shown in \cref{appendix:appA}, \cref{eq:PF_problem_2} can be compactly rewritten as follows.
\begin{align}
\begin{split}\label{eq:PF_closed_loop_compact_problem}
\operatorname*{minimize}_{p\in\PP} & \quad \mathcal{C}(\bar{x}(p))\\ 
\mathrm{subject~to}& \quad \mathcal{H}(\bar{x}(p)) \leq 0,
\end{split}
\end{align}
where $\bar{x}(p)$ is the closed-loop state trajectory generated by the dynamics \cref{eq:PF_system} under controller \cref{eq:PF_policy_mpc} for a given value of $p$. In the following section, we derive an efficient procedure to obtain generalized gradients of the function $\mathcal{C}$ with respect to $p$.
\section{Conservative Jacobians}\label{section:CJ}
In the upcoming sections, we repeatedly deal with the problem of minimizing a nonsmooth, nonconvex function. These problems admit a simple solution strategy based on a descent algorithm. However, because of the nonsmoothness, we cannot always guarantee the existence of a gradient. Luckily, we can still devise descent algorithms if the function is almost everywhere differentiable thanks to the concept of \textit{conservative Jacobian}. This section describes how conservative Jacobians generalize the notion of gradient to functions that are almost everywhere differentiable.

A \textit{path} is an absolutely continuous function $x:[0,1]\to\R^n$ which admits a derivative $\dot{x}$ for almost every $t\in[0,1]$, and for which $x(t)-x(0)$ is the Lebesgue integral of $\dot{x}$ between $0$ and $t$ for all $t\in[0,1]$.

\begin{definition}[\hspace{1sp}{\cite[Section 2]{bolte2021nonsmooth}}]
A locally Lipschitz function $\f:\R^n\to\R^m$ admits $\mathcal{J}_{\f}:\R^n\rightrightarrows\R^{m \times n}$ as a \textit{conservative Jacobian}, if $\mathcal{J}_{\f}$ is nonempty-valued, outer semicontinuous, locally bounded, and for all paths $x:[0,1]\to\R^n$ and almost all $t\in[0,1]$
\begin{align}\label{eq:CJ_chain_rule}
\odv{}{t}\f(x(t)) = \langle v,\dot{x}(t) \rangle ,~ \forall v\in \mathcal{J}_{\f}(x(t)).
\end{align}
A locally Lipschitz function $\f$ that admits a conservative Jacobian $\mathcal{J}_{\f}$ is called \textit{path-differentiable}.
\end{definition}
\rz{If $\f:\R^n \times \R^p \to \R^n$ is a function of two arguments $p$ and $x$, we define $\mathcal{J}_{\f,x}(\tilde{p},\tilde{x})=\{ V:[U~V]\in\J_{\f}(\tilde{p},\tilde{x}) \}$ as the conservative Jacobian of $\f$ with respect to $x$ (and similarly for $\J_{\f,p}(\tilde{p},\tilde{x})$). Note that $\mathcal{J}_{\f,x}(\tilde{p},\tilde{x})$ and $\mathcal{J}_{\f,p}(\tilde{p},\tilde{x})$ are obtained through the projections of the conservative Jacobian $\J_\f$ onto the $\tilde{p}$ and $\tilde{x}$ coordinates.}

Conservative Jacobians extend the concept of gradient to nonsmooth almost everywhere differentiable functions. Moreover, the conservative Jacobian $\mathcal{J}_{\f}$ coincides with the gradient $\nabla_x \f(x)$ almost everywhere \cite[Theorem 1]{bolte2021conservative}. \rz{If $\varphi$ is convex, then the standard subdifferential $\partial f$ is a conservative Jacobian for $\varphi$.}

The most useful property that conservative Jacobians possess is that they admit the chain rule \cref{eq:CJ_chain_rule}. This immediately implies the following composition rule.
\begin{lemma}[\hspace{1sp}{\cite[Lemma 6]{bolte2021conservative}}]\label{lemma:CJ_chain_rule}
Given two path-differentiable functions $\f:\Rx\to\Rp$, $\g:\Rp\to\Ru$, with conservative Jacobians $\mathcal{J}_{\f}$ and $\mathcal{J}_{\g}$, the function $\f\circ\g$ is path-differentiable with conservative Jacobian $\mathcal{J}_{\f \circ \g}(x)=\mathcal{J}_{\f}(\g(x))\mathcal{J}_{\g}(x)$.
\end{lemma}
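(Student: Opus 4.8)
The plan is to show that the set-valued map
\[
x \;\longmapsto\; \mathcal{J}_{\f}(\g(x))\,\mathcal{J}_{\g}(x) \;:=\; \bigl\{\, UV \;:\; U\in\mathcal{J}_{\f}(\g(x)),\ V\in\mathcal{J}_{\g}(x) \,\bigr\}
\]
is a conservative Jacobian of $\f\circ\g$. First, $\f\circ\g$ is locally Lipschitz as a composition of locally Lipschitz maps, so it is a legitimate candidate for path-differentiability, and it then remains to verify the four defining properties. Nonemptiness is immediate since $\mathcal{J}_{\f}$ and $\mathcal{J}_{\g}$ are nonempty-valued. For local boundedness at a point $x_0$, take a closed ball $\Omega\ni x_0$; continuity of $\g$ makes $\g(\Omega)$ compact and hence bounded, so $\mathcal{J}_{\f}$ is bounded on $\g(\Omega)$ and $\mathcal{J}_{\g}$ is bounded on $\Omega$, and therefore the matrix products $UV$ stay bounded for $x\in\Omega$.

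Next I would establish outer semicontinuity by showing the graph of the candidate is sequentially closed, which is equivalent to outer semicontinuity given the local boundedness just proven. Take $x_k\to x$ and $M_k\to M$ with $M_k = U_k V_k$, $U_k\in\mathcal{J}_{\f}(\g(x_k))$, $V_k\in\mathcal{J}_{\g}(x_k)$. By local boundedness we may pass to a subsequence along which $U_k\to U$ and $V_k\to V$; continuity of $\g$ gives $\g(x_k)\to\g(x)$, so outer semicontinuity of $\mathcal{J}_{\f}$ and of $\mathcal{J}_{\g}$ yields $U\in\mathcal{J}_{\f}(\g(x))$ and $V\in\mathcal{J}_{\g}(x)$, whence $M = UV\in\mathcal{J}_{\f}(\g(x))\mathcal{J}_{\g}(x)$.

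The heart of the argument is the chain rule \cref{eq:CJ_chain_rule} along paths. Fix a path $x$ in the domain of $\g$ and set $y:=\g\circ x$. The first task is to check that $y$ is again a path: the image $x([0,1])$ is compact, so $\g$ is Lipschitz with some constant $L$ on a bounded neighbourhood of it, giving $\|y(t)-y(s)\|\le L\|x(t)-x(s)\|\le L\int_s^t\|\dot x(\tau)\|\,d\tau$; combined with absolute continuity of $x$ this makes $y$ absolutely continuous, hence differentiable almost everywhere with $y(t)-y(0)=\int_0^t\dot y(\tau)\,d\tau$. Path-differentiability of $\g$ applied to $x$ gives a full-measure set on which $\dot y(t)=V\dot x(t)$ for all $V\in\mathcal{J}_{\g}(x(t))$; path-differentiability of $\f$ applied to the path $y$ gives a full-measure set on which $\odv{}{t}\f(y(t))=U\dot y(t)$ for all $U\in\mathcal{J}_{\f}(y(t))$. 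On the intersection of these two sets, which again has full measure, and using that $(\f\circ\g)\circ x=\f\circ y$ so the two left-hand derivatives coincide wherever they exist,
\[
\odv{}{t}(\f\circ\g)(x(t)) \;=\; \odv{}{t}\f(y(t)) \;=\; U\dot y(t) \;=\; U V\dot x(t) \;=\; (UV)\dot x(t)
\]
for every $U\in\mathcal{J}_{\f}(\g(x(t)))$ and $V\in\mathcal{J}_{\g}(x(t))$. Since every element of $\mathcal{J}_{\f}(\g(x(t)))\mathcal{J}_{\g}(x(t))$ has this form, \cref{eq:CJ_chain_rule} holds for $\f\circ\g$ with the claimed conservative Jacobian, which gives the claim.

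I expect the main obstacle to be the step showing that $\g\circ x$ is again a path — that is, propagating absolute continuity through $\g$, which relies on the local-Lipschitz hypothesis and compactness of $x([0,1])$ — together with the measure-theoretic bookkeeping needed to evaluate the two almost-everywhere identities simultaneously at a common time $t$. The remaining three properties follow routinely from the corresponding properties of $\mathcal{J}_{\f}$ and $\mathcal{J}_{\g}$ and from continuity of $\g$.
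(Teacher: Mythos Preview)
The paper does not actually prove this lemma: it is stated with a citation to \cite[Lemma 6]{bolte2021conservative} and no proof is given in the text. So there is no ``paper's own proof'' to compare against. Your argument is the standard one and is correct; it matches in spirit the proof in the cited reference, which also proceeds by pushing a path through the inner function, checking that the result is again absolutely continuous via local Lipschitzness on the compact image, and then intersecting the two full-measure sets on which the respective chain-rule identities hold. The regularity properties (nonempty, locally bounded, outer semicontinuous) are handled exactly as you do.
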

Unfortunately, not every locally Lipschitz function is path-differentiable. For this reason, we restrict our attention to \textit{\rz{definable} functions}.
%
%
\begin{definition}[\hspace{1sp}{\cite[Definitions 1.4 and 1.5]{coste1999introduction}}]\label{def:definable}
\rz{A collection $\mathcal{O}=(\mathcal{O}_n)_{n\in\N}$, where each $\mathcal{O}_n$ contains subsets of $\R^n$, is an \textit{o-minimal structure} on $(\R,+,\cdot)$ if
\begin{enumerate}
    \item all semialgebraic subsets of $\R^n$ belong to $\mathcal{O}_n$;
    \item the elements of $\mathcal{O}_1$ are precisely the finite unions of points and intervals;
    \item $\mathcal{O}_n$ is a boolean subalgebra of the powerset of $\R^n$;
    \item if $A\in \mathcal{O}_n$ and $B\in\mathcal{O}_m$, then $A\times B \in \mathcal{O}_{n+m}$;
    \item if $A\in\mathcal{O}_{n+1}$, then the set containing the elements of $A$ projected onto their first $n$ coordinates belongs to $\mathcal{O}_n$.
\end{enumerate}
A subset of $\R^n$ which belongs to $\mathcal{O}$ is said to be \textit{definable} (in an o-minimal structure). A function $\varphi:\R^n\to\R^p$ is \textit{definable} if its graph $\{(x,v):v=\varphi(x)\}$ is definable.}
\end{definition}
\rz{Definable} functions possess the following useful property.
\begin{lemma}[\hspace{1sp}{\cite[Proposition 2]{bolte2021conservative}}]\label{lemma:CJ_semialg_implies_path_diff}
All locally Lipschitz \rz{definable} functions are path-differentiable.
\end{lemma}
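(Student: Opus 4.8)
The plan is to exhibit a concrete conservative Jacobian, namely the Clarke generalized Jacobian $\partial^c\f$, and to verify the axioms of the definition. First I would reduce to the scalar case $m=1$: for $\f=(\f_1,\dots,\f_m)$ each component $\f_l$ is again locally Lipschitz and definable, and stacking rows, $\mathcal{J}_\f(x)=\{V:\mathbf{r}(V,l)\in\partial^c\f_l(x)\ \forall l\}$, reduces the identity \cref{eq:CJ_chain_rule} to the scalar statement checked coordinatewise. For scalar $\f$, the three ``regularity'' requirements are classical facts about Clarke subdifferentials of locally Lipschitz functions: near any point $\f$ is $L$-Lipschitz, so $\partial^c\f$ is nonempty, compact-convex-valued and contained in $\bar{B}(0,L)$ (local boundedness), and it has closed graph (outer semicontinuity). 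Thus the entire content of the lemma is the chain-rule identity \cref{eq:CJ_chain_rule} along \emph{arbitrary} absolutely continuous paths — and this is precisely where definability is indispensable.

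The second ingredient I would bring in is tame geometry. By the o-minimal stratification theorem, $\R^n$ admits a finite partition into definable $C^1$ manifolds (strata) $M_1,\dots,M_N$ satisfying the frontier condition ($M_i\cap\overline{M_j}\neq\emptyset$ with $i\neq j$ implies $M_i\subseteq\overline{M_j}$) and with $\f|_{M_i}\in C^1$ for every $i$; moreover one may refine it to a Whitney $(a)$-stratification for which the stratified gradients are \emph{compatible}, i.e.\ whenever $M_i\subseteq\overline{M_j}$, $x\in M_i$ and $x_k\in M_j$ with $x_k\to x$, the orthogonal projection of $\nabla(\f|_{M_j})(x_k)$ onto $T_xM_i$ converges to $\nabla(\f|_{M_i})(x)$. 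I would quote this package from the literature on tame functions rather than re-derive it.

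Then I would fix an absolutely continuous path $x:[0,1]\to\R^n$. Since $\f$ is Lipschitz on the compact set $x([0,1])$, the composition $\f\circ x$ is absolutely continuous, hence differentiable a.e.; writing $E_i=\{t:x(t)\in M_i\}$, I restrict to the full-measure set of $t$ at which $\dot x(t)$ exists, $(\f\circ x)'(t)$ exists, and $t$ is a Lebesgue density point of the unique $E_i$ containing it. The crucial geometric step is the claim that at any such $t$, either $\dot x(t)=0$ or $\dot x(t)\in T_{x(t)}M_i$. I would prove it by using the density property to choose times $t_k\to t$ with $t_k\in E_i$ and $\tfrac12 h_k\le|t_k-t|\le h_k\to0$: then $x(t_k),x(t)\in M_i$, $x(t_k)\to x(t)$, $x(t_k)\neq x(t)$ for large $k$; the unit secants $(x(t_k)-x(t))/\|x(t_k)-x(t)\|$ can accumulate only on $T_{x(t)}M_i$ because a $C^1$ manifold is locally a graph over its tangent space, while $(x(t_k)-x(t))/(t_k-t)\to\dot x(t)$ with $\|x(t_k)-x(t)\|/|t_k-t|\to\|\dot x(t)\|$, so $\dot x(t)$ is a scalar multiple of such an accumulation point, hence tangent to $M_i$.

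Finally I would close the argument. On $\{\dot x(t)=0\}$, local Lipschitzness forces $(\f\circ x)'(t)=0=\langle v,\dot x(t)\rangle$ for every $v$. If $\dot x(t)=\tau\neq0$ lies in $T:=T_{x(t)}M_i$, a short distance estimate gives $(\f\circ x)'(t)=\langle\nabla(\f|_{M_i})(x(t)),\tau\rangle$: write $x(t+h)=x(t)+h\tau+o(h)$ and split $\f(x(t+h))-\f(x(t))$ into the increment of $\f|_{M_i}$ from $x(t)$ to the nearest-point projection of $x(t+h)$ onto $M_i$ — which equals $x(t)+h\tau+o(h)$ since $\tau\in T$, contributing $h\langle\nabla(\f|_{M_i})(x(t)),\tau\rangle+o(h)$ — plus a remainder bounded by $L\,\mathrm{dist}(x(t+h),M_i)=o(h)$, using that $M_i$ is $C^1$ and $\tau$ is tangent. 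It then remains to see $\langle v,\tau\rangle=\langle\nabla(\f|_{M_i})(x(t)),\tau\rangle$ for every $v\in\partial^c\f(x(t))$; since $\tau\in T$ we have $\langle v,\tau\rangle=\langle\mathrm{proj}_T v,\tau\rangle$, and $\partial^c\f(x(t))$ is the closed convex hull of limits $\lim_k\nabla\f(x_k)$ with $x_k\to x(t)$ drawn from a full-measure set of differentiability points, so one may take such $x_k$ inside a single top-dimensional stratum $M_j$ incident to $M_i$ (where $\nabla\f=\nabla(\f|_{M_j})$), and the compatibility of stratified gradients yields $\mathrm{proj}_T(\lim_k\nabla\f(x_k))=\nabla(\f|_{M_i})(x(t))$. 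This compatibility across incident strata is the step I expect to be the crux, and it is exactly the point at which mere local Lipschitzness fails: everything else (the Clarke-calculus bookkeeping, the density-point argument, the distance estimate) is soft, whereas this property genuinely rests on the tameness of $\f$ through Whitney regularity of the stratification.
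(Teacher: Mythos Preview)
The paper does not prove this lemma; it is imported verbatim from \cite{bolte2021conservative} as a black box. So there is no ``paper's own proof'' to compare against beyond that citation. Your sketch is essentially the argument used in that reference and its antecedents (the Bolte--Daniilidis--Lewis--Shiota projection formula for Clarke subgradients of stratifiable functions, combined with the density-point tangency argument for absolutely continuous curves). In that sense you have unpacked exactly what the cited result contains.

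Your outline is sound, with one point worth tightening. In the last paragraph you write that for $v\in\partial^c\f(x(t))$ ``one may take such $x_k$ inside a single top-dimensional stratum $M_j$ incident to $M_i$''. An arbitrary element of $\partial^c\f(x(t))$ is a \emph{convex combination} of limits of gradients, and each limit may come from a different stratum; you should argue per extremal limit (pass to a subsequence lying in one top-dimensional $M_j$ by finiteness of the stratification, then invoke the frontier condition to get $M_i\subseteq\overline{M_j}$, then apply compatibility), and finally observe that $\mathrm{proj}_T$ is linear so the conclusion extends to convex hulls. You clearly have this in mind, but as written the sentence could be misread as asserting that a single $M_j$ suffices for all of $\partial^c\f(x(t))$. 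Everything else---the reduction to $m=1$, the Clarke bookkeeping, the density-point tangency, and the $o(h)$ distance estimate via local graph representation of $M_i$---is correct and standard.
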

Another crucial property of locally Lipschitz \rz{definable} functions is that obey a nonsmooth version of the implicit function theorem.
\begin{lemma}[\hspace{1sp}{\cite[Theorem 5]{bolte2021nonsmooth_extended}}]\label{lemma:CJ_IFT}
Let $\f:\R^{n_x}\times\R^{n_p}\to\R^{n_x}$ be a locally Lipschitz \rz{definable function} and let $\mathcal{J}_{\f}$ be its conservative Jacobian. Suppose $\f(\tilde{x},\tilde{p})=0$ for some $\tilde{x}\in\R^{n_x}$ and $\tilde{p}\in\R^{n_p}$. Assume that $\mathcal{J}_{\f}$ is convex and that for every $[U~V]\in \mathcal{J}_{\f}(\tilde{x},\tilde{p})$ the matrix $U$ is invertible. Then there exists a neighborhood $\mathcal{N}(\tilde{x})\times\mathcal{N}(\tilde{p})$ of $(\tilde{x},\tilde{p})$ and a path differentiable \rz{definable} function $x:\mathcal{N}(\tilde{p})\to\mathcal{N}(\tilde{x})$ such that for all $p\in \mathcal{N}(\tilde{p})$ it holds that $\f(x(p),p)=0$, and the conservative Jacobian $\mathcal{J}_x$ of $x$ is given for all $p\in\mathcal{N}(\tilde{p})$ by
\begin{align*}
\mathcal{J}_x(p)=\{ [-U^{-1}V] : [U~V]\in \mathcal{J}_{\f}(x(p),p) \}.
\end{align*}
\end{lemma}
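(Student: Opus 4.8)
\emph{Proof plan.} The plan is to split the statement into an existence/uniqueness part, which I will reduce to the classical (Clarke) setting, and a sensitivity part, which I will obtain directly from the chain rule \eqref{eq:CJ_chain_rule}. Before either, I would record two consequences of the hypotheses at $(\tilde{x},\tilde{p})$. Since $\mathcal{J}_{\f}$ is outer semicontinuous and locally bounded, $\mathcal{J}_{\f}(\tilde{x},\tilde{p})$ is nonempty and compact; as every $[U~V]$ in it has $U$ invertible and $U\mapsto U^{-1}$ is continuous on the open set of invertible matrices, there is a uniform bound $\|U^{-1}\|\le M$ on this set, and by outer semicontinuity the same invertibility together with a slightly larger uniform bound persists on $\mathcal{J}_{\f}(x,p)$ for all $(x,p)$ in some neighborhood $W$ of $(\tilde{x},\tilde{p})$. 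Moreover, because $\mathcal{J}_{\f}$ is convex-valued it contains the Clarke Jacobian $\partial^{c}\f$: every limit of classical Jacobians at points of differentiability lies in $\mathcal{J}_{\f}$ (apply \eqref{eq:CJ_chain_rule} to straight-line paths and use outer semicontinuity), and $\partial^{c}\f$ is the convex hull of such limits. This is the only place where convexity of $\mathcal{J}_{\f}$ is used, and it is genuinely needed since convex combinations of matrices with invertible blocks need not have invertible blocks.

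With these preliminaries, the hypothesis of Clarke's nonsmooth implicit function theorem (invertibility of the relevant partial generalized Jacobians in the $x$-block) is inherited from the invertibility assumption on $\mathcal{J}_{\f}$ via the inclusion $\partial^{c}\f\subseteq\mathcal{J}_{\f}$. It yields neighborhoods $\mathcal{N}(\tilde{x})$, $\mathcal{N}(\tilde{p})$, which I shrink if necessary so that $\mathcal{N}(\tilde{x})\times\mathcal{N}(\tilde{p})\subseteq W$, and a locally unique Lipschitz map $x:\mathcal{N}(\tilde{p})\to\mathcal{N}(\tilde{x})$ with $\f(x(p),p)=0$ on $\mathcal{N}(\tilde{p})$. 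Taking $\mathcal{N}(\tilde{p})\times\mathcal{N}(\tilde{x})$ to be an open box, local uniqueness makes the graph of $x$ coincide with $\{(p,x):\f(x,p)=0\}\cap(\mathcal{N}(\tilde{p})\times\mathcal{N}(\tilde{x}))$, which is definable since $\f$ is; hence $x$ is a locally Lipschitz definable function and, by \cref{lemma:CJ_semialg_implies_path_diff}, path-differentiable.

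It remains to show that the candidate set-valued map
\[
\mathcal{D}(p):=\bigl\{-U^{-1}V : [U~V]\in\mathcal{J}_{\f}(x(p),p)\bigr\}
\]
is a conservative Jacobian of $x$ on $\mathcal{N}(\tilde{p})$. Nonemptiness is immediate; local boundedness follows from local boundedness of $\mathcal{J}_{\f}$ together with the bound $\|U^{-1}\|\le M$; and outer semicontinuity follows by composing the continuous map $p\mapsto(x(p),p)$, the outer semicontinuous $\mathcal{J}_{\f}$, and the map $[U~V]\mapsto -U^{-1}V$, which is continuous on the locally compact set of pairs whose first block is invertible. For the defining chain-rule identity, take any path $p:[0,1]\to\mathcal{N}(\tilde{p})$ and set $q(s):=x(p(s))$, which is absolutely continuous because $x$ is Lipschitz. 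The concatenated path $r(s):=(x(p(s)),p(s))$ stays in $\mathcal{N}(\tilde{x})\times\mathcal{N}(\tilde{p})$ and satisfies $\f(r(s))\equiv 0$, so $\odv{}{s}\f(r(s))=0$ for a.e.\ $s$; applying \eqref{eq:CJ_chain_rule} to $\f$ along $r$ gives, for a.e.\ $s$ and every $[U~V]\in\mathcal{J}_{\f}(r(s))$, $0=U\dot{q}(s)+V\dot{p}(s)$, hence $\dot{q}(s)=-U^{-1}V\dot{p}(s)$ using invertibility of $U$ inside $W$. Therefore $\odv{}{s}x(p(s))=D\dot{p}(s)$ for all $D\in\mathcal{D}(p(s))$ at almost every $s$, which is exactly the conservativity condition, so $\mathcal{J}_{x}:=\mathcal{D}$ is a conservative Jacobian of $x$ of the claimed form.

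I expect the main obstacle to be the existence/local-uniqueness step: transferring the nonsmooth invertibility hypothesis into a form usable by a classical inverse-function argument (which is where the convexity of $\mathcal{J}_{\f}$ and the inclusion of the Clarke Jacobian are essential), and carefully shrinking all neighborhoods so that the uniform invertibility bound is available along every admissible path. The sensitivity formula itself then falls out cleanly once $x$ is known to be Lipschitz and $\f\circ r\equiv 0$, since it is just the chain rule \eqref{eq:CJ_chain_rule} applied to a path that is constrained to the zero set of $\f$.
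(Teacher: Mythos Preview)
The paper does not prove this lemma; it is quoted from \cite{bolte2021nonsmooth_extended} (Theorem~5 there) and used as a black box throughout. Your proposal is therefore not competing with anything in the paper itself.

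That said, your sketch is a faithful reconstruction of how the result is established in the cited reference: reduce existence and local uniqueness of the implicit map to Clarke's nonsmooth implicit function theorem via the inclusion $\partial^{c}\f\subseteq\mathcal{J}_{\f}$ (which, as you correctly point out, is where convexity of $\mathcal{J}_{\f}$ is genuinely needed), read off definability of $x$ from its graph being a definable set, and verify conservativity of $p\mapsto\{-U^{-1}V\}$ directly from the chain rule \eqref{eq:CJ_chain_rule} applied along paths constrained to the zero set of $\f$. The one place I would tighten is your justification that limits of classical Jacobians lie in $\mathcal{J}_{\f}$: applying \eqref{eq:CJ_chain_rule} to straight-line paths only gives information at almost every $t$, not at a prescribed point. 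It is cleaner to invoke directly that any conservative Jacobian equals the classical Jacobian almost everywhere \cite[Theorem~1]{bolte2021conservative}, then use outer semicontinuity to capture the limits and convexity to capture their hull.
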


Path-differentiable \rz{definable} functions can be minimized using simple projected-gradient based scheme as outlined in \cref{alg:CJ_min_of_path_diff_func}.

\begin{algorithm}
\caption{Minimization of path-differentiable functions}\label{alg:CJ_min_of_path_diff_func}
\begin{algorithmic}[1]
\Input $x^0$, $\{\alpha_k\}_{k\in\N}$.
\Init $k=0$.
\While{not converged}
\State $p^k\in \mathcal{J}_{\f}(x^k)$
\State $x^{k+1}=P_{\PP}[x^k-\alpha_kp^k]$
\State $k\gets k+1$
\EndWhile
\end{algorithmic}
\end{algorithm}

Where we used $P_\PP$ to denote the projector to the set $\PP$. Typically, we stop the algorithm e.g. when $\|p^{k}-p^{k-1}\|<\texttt{tol}$ for some positive tolerance $\texttt{tol}$, or after exceeding a certain number of iterations. The following results demonstrates that, under certain conditions on the step-size $\{\alpha_k\}_{k\in\N}$, \cref{alg:CJ_min_of_path_diff_func} is guaranteed to converge to a critical point of $\f$.
\begin{lemma}[\hspace{1sp}{\cite[Theorem 6.2]{davis2020stochastic}}]\label{lemma:CJ_stepsize_condition}
Assume that $\f$ is path-differentiable and \rz{definable}, that $\PP$ is a polytopic set, that the stepsizes $\{\alpha_k\}_{k\in\N} \subset \Rpp$ satisfy
\begin{align}\label{eq:CJ_stepsizes}
\sum_{k=0}^{\infty} \alpha_k = \infty ,~~~ \sum_{k=0}^{\infty} \alpha_k^2 < \infty,
\end{align}
and that $\sup_k \|x^k\| < \infty$. Then $x^k$ as obtained via \cref{alg:CJ_min_of_path_diff_func} converges to a critical point of $\f$, i.e., a point $\tilde{x}$ for which $0\in\mathcal{J}_{\f}(\tilde{x})$.
\end{lemma}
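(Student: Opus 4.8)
The statement is, in essence, a specialization of \cite[Theorem 6.2]{davis2020stochastic} to the deterministic, purely projected-gradient setting, so my plan is to verify that \cref{alg:CJ_min_of_path_diff_func} fits into the analytical framework used there — the \emph{ODE (differential inclusion) method} of Benaïm–Hofbauer–Sorin — and to check that all of its hypotheses hold under our assumptions. Two structural facts carry the argument. First, a conservative Jacobian is exactly a generalized derivative obeying the chain rule \cref{eq:CJ_chain_rule} along absolutely continuous paths, which is precisely the property the ODE method requires of the oracle $p^k\in\J_\f(x^k)$; moreover local Lipschitzness together with \cref{lemma:CJ_semialg_implies_path_diff} guarantees that $\J_\f$ is nonempty-valued, outer semicontinuous and locally bounded, so the oracle is well defined. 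Second, definability (\cref{def:definable}) supplies the \emph{tameness} that rules out the pathologies — a continuum of critical values, Lyapunov plateaus — that would otherwise obstruct convergence.

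First I would associate to \cref{alg:CJ_min_of_path_diff_func} the constrained differential inclusion
\begin{align*}
\dot{x}(s)\in -\J_\f(x(s)) - N_{\PP}(x(s)),
\end{align*}
where $N_{\PP}(x)$ is the normal cone of the polytope $\PP$ at $x$, and argue that the piecewise-affine interpolation of $\{x^k\}$, with the time axis rescaled by the partial sums of the step-sizes, is a bounded asymptotic pseudo-trajectory of this inclusion. This is where $\sup_k\|x^k\|<\infty$ and the Robbins–Monro conditions \cref{eq:CJ_stepsizes} enter: square-summability forces the discretization error and the gap between $P_{\PP}[x^k-\alpha_k p^k]$ and the ``ideal'' Euler step to vanish in the appropriate Cesàro sense, while non-summability ensures the rescaled time covers $[0,\infty)$. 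Since the scheme is deterministic there is no stochastic noise term to handle, which removes the most delicate estimates of \cite{davis2020stochastic}.

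Next I would verify that $\f$ is a strict Lyapunov function for this inclusion on $\PP$: along a viable (projected) solution $x(\cdot)$ one has $\dot{x}(s) = -v(s)-w(s)$ with $v(s)\in\J_\f(x(s))$, $w(s)\in N_{\PP}(x(s))$ and $\langle w(s),\dot{x}(s)\rangle = 0$, so the chain rule \cref{eq:CJ_chain_rule} yields $\odv{}{s}\f(x(s)) = \langle v(s),\dot{x}(s)\rangle = -\|\dot{x}(s)\|^2 \le 0$, with equality precisely at equilibria, i.e. points $\tilde{x}$ with $0\in\J_\f(\tilde{x})+N_{\PP}(\tilde{x})$ (which reduces to $0\in\J_\f(\tilde{x})$ when no facet of $\PP$ is active, matching the notion of critical point in the statement). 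The convergence theorem for asymptotic pseudo-trajectories then confines the limit set of $\{x^k\}$ to a compact, connected, internally chain-transitive, invariant set on which $\f$ is constant; the o-minimal Sard theorem (a consequence of definability) says the set of critical values of $\f$ is finite, so this forces $\f(x^k)$ to converge and the limit set to consist of critical points.

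The main obstacle is the rigorous treatment of the unbounded projection/normal-cone term: unlike a classical ODE, the right-hand side of the inclusion above is not locally bounded near the boundary of $\PP$, so the standard asymptotic-pseudo-trajectory machinery must be adapted — either by passing to Moreau's sweeping-process / projected-dynamics formulation, or by exploiting that $\PP$ is a polytope (finitely many facets, piecewise-constant normal cone) so that the velocity stays bounded on each face. This is exactly the part carried out carefully in \cite{davis2020stochastic}, so in the write-up I would lean on their Theorem~6.2 for it rather than reprove the estimates, limiting our contribution to checking that path-differentiability and definability of $\f$ — properties we establish for the closed-loop cost in the following sections — place us squarely within their hypotheses.
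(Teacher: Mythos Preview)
The paper does not prove this lemma at all: it is stated as a direct citation of \cite[Theorem 6.2]{davis2020stochastic}, with no accompanying argument. So there is no ``paper's own proof'' to compare against; the authors simply invoke the external result and move on.

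Your proposal is therefore not a competing proof but rather an expository sketch of the machinery behind the cited theorem. As such it is broadly accurate: the ODE/differential-inclusion method of Bena\"im--Hofbauer--Sorin, the role of the chain rule \cref{eq:CJ_chain_rule} in making $\f$ a Lyapunov function, the Robbins--Monro step-size conditions to obtain an asymptotic pseudo-trajectory, and the o-minimal Sard theorem to rule out a continuum of critical values are indeed the ingredients of \cite{davis2020stochastic}. You also correctly flag the main technical wrinkle (the unbounded normal-cone term at the boundary of $\PP$) and appropriately defer to the cited reference for it. One minor point: in the constrained setting the relevant notion of critical point is $0\in\J_\f(\tilde{x})+N_{\PP}(\tilde{x})$, not $0\in\J_\f(\tilde{x})$ as the lemma is (somewhat loosely) stated; you note this yourself, and it is worth keeping straight since the paper uses the projected scheme throughout.
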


The assumption on the boundedness of the conservative Jacobians is not restrictive in practice and it is generally satisfied under a suitable stepsize choice, or if $\PP$ is a bounded set. Readers should refer to \cite{davis2020stochastic} for more details.
\section{Differentiating the MPC policy}\label{section:DIFF}
In this section, we rewrite \cref{eq:PF_mpc} in a more convenient form and then show that, under certain conditions, the map \mpce admits a conservative Jacobian, leading to a descent algorithm for \cref{eq:PF_problem_2}.

\subsection{Writing the MPC problem as a QP}\label{subsection:DIFF_writing_mpc_as_qp}

\rz{Problem \cref{eq:PF_mpc} can be reformulated as a quadratic program in standard form (see e.g. \cite[Section III]{wang2009fast})}
\begin{align}\label{eq:DIFF_qp}
\begin{split}
\operatorname*{minimize}_{y} & \quad \frac{1}{2}y^\top \QQ(p) y + \qq(\bar{p})^\top y\\
\mathrm{subject~to} & \quad \FF(p) y = \ff(\bar{p}),\\
& \quad \GG(p) y \leq \gg(\bar{p}),
\end{split}
\end{align}
where $\p:=(\bar{x}_t,p)$. We denote with $n_\text{in}$ and $n_\text{eq}$ the number of inequality and equality constraints in \cref{eq:DIFF_qp}, respectively. Note that the parameter $p$ can potentially affect all terms in the cost and in the constraints, whereas the initial condition $\bar{x}_t$ can only affect the linear part of the cost and the affine term in the constraints.

To simplify the computation of the conservative Jacobian, we operate on the Lagrange dual problem associated to \qp. In this case, the constraints are parameter-independent, as $\p$ only affects the cost function of the problem. To ensure that the dual problem has a unique solution for every value of $\p$, we impose the following assumption.
\begin{assumption}
\label{ass:DIFF_strong_convexity}
For all parameter vectors $\p$ in some polytopic set $\PPP$, the matrix $\QQ(p)$ in \cref{eq:DIFF_qp} is positive definite, problem \cref{eq:DIFF_qp} is feasible and satisfies the linear independence constraint qualification (LICQ).
\end{assumption}

Recall that \cref{eq:DIFF_qp} satisfies the LICQ if given an optimizer $y(\p)$, the rows of $\GG$ associated with the active inequality constraints and the rows of $\FF$ are linearly independent. Note that the LICQ assumption holds if, for example, the constraints on $x_{k|t}$ and $u_{k|t}$ in \cref{eq:PF_mpc} are simple box constraints
\begin{align*}
x_\text{min} \leq x_{k|t} \leq x_\text{max} ,~~~ u_\text{min} \leq u_{k|t} \leq u_\text{max},
\end{align*}
for some $x_\text{min},x_\text{max}\in\Rx$, $x_\text{min}<x_\text{max}$, and $u_\text{min},u_\text{max}\in\Ru$, $u_\text{min}<u_\text{max}$.

The feasibility condition in \cref{ass:DIFF_strong_convexity} can be restrictive in practical scenarios. We propose a simple extension of our method that can deal with losses of feasibility in \cref{subsection:EXT_infeasibility}.

Under \cref{ass:DIFF_strong_convexity}, we can obtain the Lagrange dual of \qp following the procedure outlined in \cref{appendix:appC}.
\begin{align}\label{eq:DIFF_dual}
\begin{split}
\operatorname*{minimize}_{z} & \quad \frac{1}{2} z^\top \HH(\p) z + \hh(\p)^\top z\\
\mathrm{subject~to} & \quad Ez \geq 0.
\end{split}
\end{align}
Note that in \dual the parameters $p$ and $\bar{x}_t$ only affect the quadratic part $\HH$ and the linear part $\hh$ of the cost, whereas the matrix $E$ in the constraints is parameter-independent.

The primal solution $y(\p)$ can be obtained from the dual solution $z(\p)=(\lambda(\p),\mu(\p))$ as
\begin{align}\label{eq:PF_primal_sol_from_dual_sol}
y(\p)=-\QQ(p)^{-1}(\FF(p)^\top \mu(\p) + \GG(p)^\top \lambda(\p)+\qq(\p)).
\end{align}

\subsection{Writing the dual as fixed point condition}
%
To obtain the conservative Jacobian $\mathcal{J}_z$ of the dual optimizer, we follow the procedure proposed in \cite{bolte2022differentiating}: we write the optimality conditions of \cref{eq:DIFF_dual} as a fixed point equation $\F(z,\p)=0$, obtain the conservative Jacobian of $\F$ with respect to $z$ and $\p$, and apply the implicit function theorem in \cref{lemma:CJ_IFT}. Since \cref{eq:DIFF_dual} is a quadratic program, a necessary and sufficient condition for optimality \cite[Theorem 3.67]{beck2017first} is
\begin{align}\label{eq:DIFF_opt_cond_1}
0 \in \HH(\p)z+\hh(\p) + N_C(z),
\end{align}
where $N_C$ is the normal cone mapping of $C:=\{z\in\R^{n_z}:Ez \geq 0\}$, with $n_z = n_\text{in} + n_\text{eq}$ \cite[Example 3.5]{beck2017first}. Leveraging \cite[Corollary 27.3]{bauschke2017convex}, we have that \cref{eq:DIFF_opt_cond_1} is equivalent to
\begin{align}\label{eq:DIFF_monotone_inclusion}
0\in P_C[ z - \gamma (\HH(\p)z+\hh(\p)) ] - z =: \F(z,\p),
\end{align}
where $\gamma\in\Rpp$ is a positive scalar and $P_C:\R^{n_z}\to C$ is the projection operator to the set $C$.
To ensure the existence of the conservative Jacobian of $\F$, we impose the following assumption.
\begin{assumption}\label{ass:DIFF_semi_alg}
The maps $\QQ(p)$, $\qq(\p)$, $\FF(p)$, $\ff(\p)$, $\GG(p)$, $\gg(\p)$ are locally Lipschitz and \rz{definable}.
\end{assumption}
\rz{\cref{ass:DIFF_semi_alg} is not restrictive in practice as definable functions include most common functions of interest in the field of optimization and control. For example, all semialgebraic functions, real analytic functions (restricted to a definable domain), and any product, sum, inversion, and composition of definable functions are definable \cite{coste1999introduction}. Moreover, derivatives of definable functions are definable \cite[Lemma 6.1]{coste1999introduction}, meaning that if $F$ and $\phi$ are obtained by linearizing $f$ (which is definable by \cref{ass:CLopt_f_is_path_diff}) using the dynamic linearization technique outlined in \cref{subsection:PF_choosing_st}, the definability assumption is immediately satisfied.}


\begin{lemma}\label{lemma:DIFF_F_is_path_diff}
Under \cref{ass:DIFF_semi_alg,ass:DIFF_strong_convexity}, $\F$ is locally Lipschitz \rz{definable}.
\end{lemma}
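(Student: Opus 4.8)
The plan is to write $\F$ as a finite combination and composition of maps that are individually locally Lipschitz and definable, and then invoke the stability of this class under sums, products, inverses and compositions recalled after \cref{ass:DIFF_semi_alg}. Recall that $\F(z,\p)=P_C[\,z-\gamma(\HH(\p)z+\hh(\p))\,]-z$, so it suffices to control the dual data $\HH,\hh$, the inner affine-in-$z$ map, and the projector $P_C$.

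First I would reduce the claim to a statement about $\HH(\p)$ and $\hh(\p)$. By the construction of the dual \dual in \cref{appendix:appC}, $\HH$ and $\hh$ are built from $\QQ(p)$, $\qq(\p)$, $\FF(p)$, $\ff(\p)$, $\GG(p)$, $\gg(\p)$ by finitely many additions, multiplications and transpositions together with a single matrix inversion, that of $\QQ(p)$. Under \cref{ass:DIFF_strong_convexity} the matrix $\QQ(p)$ is positive definite, hence invertible, for every $\p\in\PPP$; since positive definiteness is an open condition and $\QQ$ is continuous, $p\mapsto\QQ(p)^{-1}$ is well defined on an open set containing $\PPP$, where it is locally Lipschitz — the entrywise inverse is smooth on the open set of invertible matrices and $\QQ$ is locally Lipschitz by \cref{ass:DIFF_semi_alg} — and definable, since matrix inversion is a rational, hence semialgebraic, map and compositions of definable maps are definable. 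Combining this with \cref{ass:DIFF_semi_alg} and the closure of the locally Lipschitz definable class under sums, products and compositions, I obtain that $\HH$ and $\hh$ are locally Lipschitz and definable; consequently so is the inner map $(z,\p)\mapsto z-\gamma(\HH(\p)z+\hh(\p))$, which is in addition polynomial in $z$ (so the product with $z$ is Lipschitz on bounded sets, hence locally Lipschitz jointly).

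Next I would show that $P_C$, the projection onto $C=\{z:Ez\ge 0\}$, is locally Lipschitz and definable. Since $C$ is a nonempty closed convex polyhedron ($0\in C$), $P_C$ is single valued and nonexpansive, hence globally $1$-Lipschitz. It is definable because its graph $\{(z,w):w=P_C(z)\}$ is semialgebraic: $w=P_C(z)$ is equivalent to the KKT system of the defining quadratic program, a finite collection of linear equalities, linear inequalities and complementarity conditions — all semialgebraic — so the graph is semialgebraic by the Tarski–Seidenberg projection theorem (equivalently, $P_C$ is piecewise affine over a polyhedral subdivision). Then $\F(z,\p)=P_C[\,z-\gamma(\HH(\p)z+\hh(\p))\,]-z$ is the difference of the smooth (definable) identity in $z$ and the composition of the locally Lipschitz definable map $P_C$ with the locally Lipschitz definable inner map, and is therefore locally Lipschitz and definable.

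The step I expect to be the main obstacle is the first one: certifying that $\QQ(p)^{-1}$, and through it $\HH$ and $\hh$, stay inside the locally Lipschitz definable class — this is exactly where \cref{ass:DIFF_strong_convexity} is essential, and where one must be slightly careful since $\PPP$ need not be compact, so the local Lipschitz statements should be phrased on neighbourhoods of points of $\PPP$ (which is all that is needed here and all that \cref{lemma:CJ_IFT} will use). Everything downstream — the nonexpansiveness and semialgebraicity of $P_C$ and the final assembly — is routine bookkeeping with the closure properties.
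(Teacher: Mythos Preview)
Your proposal is correct and follows essentially the same decomposition as the paper: show that $\HH,\hh$ are locally Lipschitz definable via \cref{ass:DIFF_semi_alg} and the rational (hence semialgebraic) nature of $\QQ^{-1}$, that the inner affine-in-$z$ map inherits these properties, and that $P_C$ is locally Lipschitz definable, then conclude by closure under composition. The only cosmetic difference is in the treatment of $P_C$: the paper exploits the explicit coordinate structure $C=\R_{\geq 0}^{n_\text{in}}\times\R^{n_\text{eq}}$ to write $P_C$ as a diagonal of scalar maps $\max\{0,\cdot\}$ (piecewise linear, hence definable), whereas you argue more abstractly via nonexpansiveness and semialgebraicity of the KKT graph---both are valid, and yours would extend unchanged to a general polyhedral $C$.
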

\begin{proof}
The projector $P_C$ is given by
\begin{align}\label{eq:DIFF_MPC_P_C}
P_C(z)\! = \! \operatorname{diag}(P_{\R_{\geq 0}}(\lambda_1),\dots,P_{\R_{\geq 0}}(\lambda_{n_\text{in}}),\mu_1,\dots,\mu_{n_\text{eq}}),
\end{align}
where $P_{\R_{\geq 0}}:\R\to\R_{\geq 0}$ is the one-dimensional projector to the set of non-negative real numbers
\begin{align}\label{eq:DIFF_MPC_P_nonneg_orth}
P_{\R_{\geq 0}}(z) = \max \{ 0,z \}.
\end{align}
The function $P_{\R_{\geq 0}}$ is locally Lipschitz and piecewise linear, therefore \rz{definable}. We conclude that $P_C$ is locally Lipschitz \rz{definable}.

Next, the function $z\mapsto z-\gamma(\HH(\p)z+\hh(\p))$ is linear in $z$, and therefore both \rz{definable} and Lipschitz. Moreover, thanks to \cref{ass:DIFF_semi_alg}, we have that both $\HH$ and $\hh$ are locally Lipschitz \rz{definable} in $\p$ since they are constructed as products or sums of locally Lipschitz \rz{definable} functions (as shown in \cref{appendix:appC}), and these operations preserve both local Lipschitz continuity and \rz{definability} \cite[Corollary 2.9]{coste2000introduction}. Note that $\QQ^{-1}(p)$ is also \rz{locally Lipschitz definable since each of its entries is the ratio of two polynomial functions (i.e. semialgebraic) of the entries of $\QQ$}. We conclude that $\p\mapsto z-\gamma(\HH(\p)z+\hh(\p))$ is both locally Lipschitz \rz{definable} in $\p$.

We conclude that $\F$ is locally Lipschitz \rz{definable} since it is the composition of locally Lipschitz \rz{definable} functions.
\end{proof}
The conservative Jacobian of the dual variable $z$ can now be readily obtained by applying the implicit function theorem in \cref{lemma:CJ_IFT} to the map $\F$.
\begin{theorem}\label{thm:main_thm}
Under \cref{ass:DIFF_semi_alg,ass:DIFF_strong_convexity}, the optimizer $z(\p)$ of \dual is unique and locally Lipschitz \rz{definable} for any $\bar{p}\in\PPP$. Its conservative Jacobian $\mathcal{J}_z(\bar{p})$ contains elements of the form $-U^{-1}V$, where
\begin{subequations}
\label{eq:DIFF_cons_jac_fixed_point}\begin{align}
U & \in J_{P_C}(I-\gamma \HH(\p)) -I, \label{eq:DIFF_cons_jac_fixed_point_1}\\
V & \in -\gamma J_{P_C}(Az+B),\label{eq:DIFF_cons_jac_fixed_point_2}
\end{align}
\end{subequations}
with $J_{P_C} \in \mathcal{J}_{P_C}(z - \gamma(\HH(\p)z+\hh(\p)))$, $A\in\mathcal{J}_{\HH}(\p)$, $B\in\mathcal{J}_{\hh}(\p)$.
\end{theorem}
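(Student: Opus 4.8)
The plan is to assemble the three ingredients already available in the excerpt: strong convexity of the dual (giving uniqueness of $z(\bar p)$), definability of the fixed-point map $\F$ (\cref{lemma:DIFF_F_is_path_diff}), and the nonsmooth implicit function theorem (\cref{lemma:CJ_IFT}) applied to $\F(z,\bar p)=0$, which by \cref{eq:DIFF_monotone_inclusion} characterizes the optimizer. First I would argue uniqueness: under \cref{ass:DIFF_strong_convexity} the matrix $\QQ(p)\succ0$, so the primal \qp is strongly convex, hence has a unique primal solution; combined with LICQ this forces uniqueness of the dual multipliers, so $z(\bar p)=(\lambda(\bar p),\mu(\bar p))$ is well-defined as a single-valued map on $\PPP$. (One should also remark that $\HH(\bar p)\succeq0$, which is all the implicit-function argument needs.)

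Next I would compute the conservative Jacobian of $\F$ in its two arguments. Writing $w(z,\bar p):=z-\gamma(\HH(\bar p)z+\hh(\bar p))$, we have $\F(z,\bar p)=P_C(w(z,\bar p))-z$. By \cref{lemma:CJ_chain_rule} (the chain rule for conservative Jacobians) together with the fact (from \cref{lemma:DIFF_F_is_path_diff}) that $w$ is definable and that its classical partial derivatives are $\partial w/\partial z = I-\gamma\HH(\bar p)$ and $\partial w/\partial \bar p = -\gamma\big(\mathcal J_{\HH}(\bar p)[\cdot]\,z + \mathcal J_{\hh}(\bar p)\big)$ (writing $A\in\mathcal J_{\HH}(\bar p)$, $B\in\mathcal J_{\hh}(\bar p)$ so that this reads $-\gamma(Az+B)$ in the notation of the statement), the product rule gives
\begin{align*}
U &\in \mathcal J_{P_C}\big(w(z,\bar p)\big)\,(I-\gamma\HH(\bar p)) - I,\\
V &\in -\gamma\,\mathcal J_{P_C}\big(w(z,\bar p)\big)\,(Az+B),
\end{align*}
evaluated at $w(z,\bar p)=z-\gamma(\HH(\bar p)z+\hh(\bar p))$, which is exactly \cref{eq:DIFF_cons_jac_fixed_point}. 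Here the $-I$ comes from differentiating the $-z$ term. One has to be a little careful that the product of a set-valued map with a matrix is taken elementwise and that definability is preserved under this composition, but that is routine given \cref{lemma:CJ_semialg_implies_path_diff} and the calculus rules already cited.

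The one genuine obstacle is verifying the hypotheses of \cref{lemma:CJ_IFT}: namely that (i) $\mathcal J_{\F}$ can be taken convex-valued, and (ii) every $U$ in $\mathcal J_{\F,z}$ is invertible. For (i) I would use that $P_{\R_{\ge0}}(s)=\max\{0,s\}$ has the convex conservative Jacobian $s\mapsto\{1\}$ for $s>0$, $\{0\}$ for $s<0$, and $[0,1]$ at $s=0$; taking the Cartesian product over coordinates (and $\{1\}$ on the equality block) yields a convex-valued $\mathcal J_{P_C}$, and the affine post-composition in the formulas above preserves convexity. For (ii) — the crux — I would show that for any diagonal matrix $D$ with entries in $[0,1]$ (a selection of $\mathcal J_{P_C}$), the matrix $U=D(I-\gamma\HH)-I$ is invertible, equivalently that $D(I-\gamma\HH)-I$ has no kernel. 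Writing $D(I-\gamma\HH)v=v$ and splitting according to whether $D_{ii}\in\{0,1\}$ or $D_{ii}\in(0,1)$, one reduces to showing that a principal submatrix of $\gamma\HH$ restricted to the "active" coordinates has no eigenvalue making $D(I-\gamma\HH)$ unipotent on that block; since $\HH\succeq0$, choosing $\gamma$ small enough that $\|\gamma\HH\|<1$ makes $I-\gamma\HH$ positive definite with spectral radius $<1$, and then $\|D(I-\gamma\HH)\|\le\|I-\gamma\HH\|<1$, so $D(I-\gamma\HH)-I$ is invertible (its spectrum avoids $0$). I expect the proof to invoke exactly such a smallness condition on $\gamma$ — either stated as a standing assumption or derived here — and then conclude by \cref{lemma:CJ_IFT} that $z(\cdot)$ is path-differentiable and definable near $\bar p$ with $\mathcal J_z(\bar p)=\{-U^{-1}V:[U\ V]\in\mathcal J_{\F}(z(\bar p),\bar p)\}$, which is the claimed formula; local definability and the global statement on all of $\PPP$ follow by covering $\PPP$ with such neighborhoods and using that a locally definable function on a definable set is definable.
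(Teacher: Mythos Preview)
Your overall structure---uniqueness of $z(\bar p)$ from strong convexity plus LICQ, derivation of the formulas for $U$ and $V$ via the chain rule applied to $\F(z,\bar p)=P_C(w(z,\bar p))-z$, and then an appeal to \cref{lemma:CJ_IFT}---matches the paper, and the computation of \cref{eq:DIFF_cons_jac_fixed_point} is correct.

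The genuine gap is in the invertibility argument for $U$. Your proposal relies on choosing $\gamma$ small enough that $\|I-\gamma\HH\|<1$ and then bounding $\|D(I-\gamma\HH)\|<1$. This does not work: $\HH$ is only positive \emph{semi}definite (it has the form $[G;F]\,\QQ^{-1}[G;F]^\top$ with $[G;F]$ generally wide), so $\HH$ typically has a nontrivial kernel and the largest eigenvalue of $I-\gamma\HH$ is exactly $1$ for every $\gamma>0$. Hence $\|D(I-\gamma\HH)\|\le 1$ with equality possible, and $D(I-\gamma\HH)-I$ can be singular by your argument alone. No smallness condition on $\gamma$ rescues this.

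The paper's proof proceeds differently and does not need any restriction on $\gamma$. It first establishes, using the primal--dual relation $y=-\QQ^{-1}(\FF^\top\mu+\GG^\top\lambda+\qq)$, the sign pattern of $\bar\lambda_i$: strictly active constraints give $\bar\lambda_i>0$, inactive give $\bar\lambda_i<0$, weakly active give $\bar\lambda_i=0$. This pins down which diagonal entries of $J_{P_C}$ are $1$, $0$, or in $[0,1]$. After eliminating the inactive block (where $D_{ii}=0$ forces $v_i=0$), the equation $Uv=0$ on the remaining (weakly active, strictly active, equality) block reduces to $(\mathcal A_1-\mathcal A_2)v=0$ with $\mathcal A_1\preceq 0$ diagonal and $\mathcal A_2=[\tilde G_{\mathrm{wa}};\tilde G_{\mathrm a};\tilde F][\tilde G_{\mathrm{wa}};\tilde G_{\mathrm a};\tilde F]^\top$. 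The crucial step is that LICQ (part of \cref{ass:DIFF_strong_convexity}) makes $[G_{\mathrm{wa}};G_{\mathrm a};F]$ full row rank, hence $\mathcal A_2\succ 0$, so $\mathcal A_1-\mathcal A_2\prec 0$ is invertible. Your sketch never invokes LICQ in the invertibility step, which is precisely where it is needed; ``$\HH\succeq 0$'' is too weak.
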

\begin{proof}
See \cref{appendix:appD}.
\end{proof}
\begin{remark}
The proof in \cref{appendix:appD} also establishes that we can always choose $J_{P_C}$ of the form
\begin{align*}
J_{P_C} = \operatorname{diag}(\operatorname{sign}(\lambda_1),\dots,\operatorname{sign}(\lambda_{n_\text{in}}),1,\dots,1),
\end{align*}
justifying our choice of working with the dual problem \cref{eq:DIFF_dual} instead of the primal \cref{eq:DIFF_qp}.
\end{remark}
The conservative Jacobian $\mathcal{J}_y(\p)$ of the primal optimizer $y(\p)$ can then easily be retrieved from $\mathcal{J}_z(\p)$ using \cref{eq:PF_primal_sol_from_dual_sol}. For simplicity, define
\begin{align*}
\mathcal{G}(z,\p):=-\QQ(p)^{-1}(\FF(p)^{\top}\mu+\GG(p)^{\top}\lambda+\qq(\p)).
\end{align*}
\begin{corollary}
Under \cref{ass:DIFF_semi_alg,ass:DIFF_strong_convexity}, the optimizer $y(\p)$ of \qp is unique and locally Lipschitz \rz{definable} for any $\p\in\PPP$. Its conservative Jacobian $\mathcal{J}_y(\p)$ contains elements of the form
\begin{align}\label{eq:PF_cons_jac_of_primal}
W -\QQ(p)^{-1}[\GG(p)^{\top}~\FF(p)^{\top}]Z \in \mathcal{J}_y(\p),
\end{align}
where $Z\in\mathcal{J}_z(\p)$ and $W\in \mathcal{J}_{\mathcal{G},\p}(z,\p)$.
\end{corollary}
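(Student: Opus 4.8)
The starting point is the identity $y(\p)=\mathcal{G}(z(\p),\p)$, which is nothing but \cref{eq:PF_primal_sol_from_dual_sol} rewritten in terms of the map $\mathcal{G}$ introduced above. Uniqueness of $y(\p)$ then follows in two equivalent ways: under \cref{ass:DIFF_strong_convexity} the matrix $\QQ(p)$ is positive definite, so the cost of \qp is strongly convex and the feasible QP has a unique minimizer; alternatively, $z(\p)$ is unique by \cref{thm:main_thm} and $y(\p)$ is pinned down by \cref{eq:PF_primal_sol_from_dual_sol}.

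For the regularity claim, the plan is to show that $\mathcal{G}$ itself is locally Lipschitz definable. Indeed $\QQ(p)^{-1}$ is locally Lipschitz definable by the same argument used in the proof of \cref{lemma:DIFF_F_is_path_diff} (each entry is a ratio of polynomials in the entries of $\QQ$), and $\FF$, $\GG$, $\qq$ are locally Lipschitz definable by \cref{ass:DIFF_semi_alg}; since products, transpositions, and sums of locally Lipschitz definable functions are again locally Lipschitz definable \cite[Corollary 2.9]{coste2000introduction}, so is $\mathcal{G}$. By \cref{thm:main_thm}, $\p\mapsto z(\p)$ is locally Lipschitz definable, hence so is the stacked map $\Phi(\p):=(z(\p),\p)$, and therefore $y=\mathcal{G}\circ\Phi$ is locally Lipschitz definable, in particular path-differentiable by \cref{lemma:CJ_semialg_implies_path_diff}.

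To obtain the conservative Jacobian I would apply the composition rule \cref{lemma:CJ_chain_rule} to $y=\mathcal{G}\circ\Phi$. The conservative Jacobian of the stacking $\Phi$ contains the block matrices $\bigl[\begin{smallmatrix} Z \\ I\end{smallmatrix}\bigr]$ with $Z\in\mathcal{J}_z(\p)$, while $\mathcal{J}_{\mathcal{G}}(z,\p)$ splits, by definition of the partial conservative Jacobians, as $[\,\mathcal{J}_{\mathcal{G},z}(z,\p)\ \ \mathcal{J}_{\mathcal{G},\p}(z,\p)\,]$. Multiplying, $\mathcal{J}_y(\p)$ contains $\mathcal{J}_{\mathcal{G},z}(z,\p)\,Z+\mathcal{J}_{\mathcal{G},\p}(z,\p)$. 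The key simplification is that, with $\p$ (hence $p$) held fixed, $\mathcal{G}$ is \emph{linear} in $z=(\lambda,\mu)$, so $\mathcal{J}_{\mathcal{G},z}(z,\p)$ reduces to the single Jacobian matrix obtained by differentiating $-\QQ(p)^{-1}(\FF(p)^{\top}\mu+\GG(p)^{\top}\lambda+\qq(\p))$ in $z$; respecting the ordering $z=(\lambda,\mu)$ this equals $-\QQ(p)^{-1}[\GG(p)^{\top}\ \FF(p)^{\top}]$. Substituting and writing $W$ for an element of $\mathcal{J}_{\mathcal{G},\p}(z,\p)$ yields exactly the claimed form \cref{eq:PF_cons_jac_of_primal}.

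The only point requiring genuine care — and the place I expect to have to be precise rather than wave hands — is the treatment of the stacking map $\Phi$ inside the chain rule and the assertion that the $z$-block of $\mathcal{J}_{\mathcal{G}}$ collapses to the exact partial derivative. Both are standard: the stacking rule for conservative Jacobians follows by checking the path chain rule \cref{eq:CJ_chain_rule} coordinatewise, and a $C^1$ (here linear) dependence on a group of variables forces the corresponding block of any conservative Jacobian to coincide with the classical partial derivative. These should be stated explicitly rather than invoked as folklore, after which the corollary is immediate.
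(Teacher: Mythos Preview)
Your proposal is correct and follows essentially the same route as the paper: the paper's proof is a one-liner invoking preservation of local Lipschitz definability under composition and the chain rule applied to \cref{eq:PF_primal_sol_from_dual_sol}, which is exactly what you unfold in detail via $y=\mathcal{G}\circ\Phi$. Your extra care about the stacking map and the linear $z$-dependence is well placed and simply makes explicit what the paper takes for granted.
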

\begin{proof}
Follows immediately from the fact that composition preserves the local Lipschitz continuity and \rz{definability}, and by applying the chain rule of differentiation to \cref{eq:PF_primal_sol_from_dual_sol}.
\end{proof}

The algorithm below summarizes a procedure for computing the conservative Jacobians $\mathcal{J}_{\operatorname{MPC},\bar{x}_t}$ and $\mathcal{J}_{\operatorname{MPC},p}$.
\begin{algorithm}[H]
\caption{Computing $\mathcal{J}_{\operatorname{MPC}}(\p)$}\label{alg:PF_cons_jac}
\begin{algorithmic}[1]
\Input $\bar{p}$
\State Solve \cref{eq:PF_mpc} and get dual optimizers $z=(\lambda,\mu)$.
\State Compute $[U~V]\in\mathcal{J}_{\F}(z,\p)$ using \cref{eq:DIFF_cons_jac_fixed_point}.
\State Compute $Z = -U^{-1}V \in \mathcal{J}_z(\bar{p})$.
\State Compute $\mathcal{J}_y(\bar{p})$ using \cref{eq:PF_cons_jac_of_primal}.
\State \Return $\mathcal{J}_{\operatorname{MPC}}(\bar{p})$ (extracted from $\mathcal{J}_y(\bar{p})$).
\end{algorithmic}
\end{algorithm}
Notice that $\mathcal{J}_{\operatorname{MPC},\bar{x}_t}$ and $\mathcal{J}_{\operatorname{MPC},p}$ are contained in $\mathcal{J}_y$, and we can therefore retrieve them by selecting the appropriate entries in $\mathcal{J}_y(\bar{x}_t,p)$.

\rz{The procedure outlined so far allows for the computation of conservative Jacobians of quadratic programs. Extending this method to more general classes of problems is a promising direction for future research. One way to proceed could be to apply the implicit function theorem to the optimality conditions of the nonlinear problem, as done in \cite{gros2019data}. In this case, however, it's unclear whether the resulting Jacobian will be conservative.}
\section{Closed-loop optimization scheme}\label{section:CLopt}
\subsection{Backpropagation}\label{subsection:CLopt_backprop}
Next, we develop a modular mechanism, based on backpropagation, to obtain the conservative Jacobian of the entire closed-loop trajectory $\bar{x}$ using the individual conservative Jacobians of each optimization problem.

In machine learning, \textit{backpropagation} is often used to efficiently construct gradients with respect to design parameters of algorithms involving several successive steps. The idea is to compute the gradients of each step and combine them using the chain rule, eliminating redundant calculations and improving efficiency.

In our case, the closed-loop dynamics can be expressed as a recursive equation
\begin{align}
\bar{x}_{t+1} = f(\bar{x}_t, \mpce), \label{eq:CLopt_closed_loop_dynamics}
\end{align}
where every state $\bar{x}_{t+1}$ depends solely on its predecessor $\bar{x}_t$, and the design parameters $p$. To be able to propagate the conservative Jacobians through the dynamics of the system, we require $f$ to be path-differentiable.
\begin{assumption}\label{ass:CLopt_f_is_path_diff}
The function $f$ is locally Lipschitz \rz{definable}.
\end{assumption}
Under \cref{ass:CLopt_f_is_path_diff}, we can compute the conservative Jacobian $\mathcal{J}_{\bar{x}_{t+1}}(p)$ of the state $\bar{x}_{t+1}$ with respect to the design parameters $p$ recursively as follows:
\begin{align}
&\mathcal{J}_{\bar{x}_{t+1}}(p) = \mathcal{J}_{f,x}(\bar{x}_t,\bar{u}_t) \mathcal{J}_{\bar{x}_t}(p) \! + \! \mathcal{J}_{f,u}(\bar{x}_t,\bar{u}_t) \left[ \mathcal{J}_{\operatorname{MPC},\bar{x}_t}(\bar{x}_t,p) \right. \notag \\ & \hspace{1.6cm} \cdot \left. \mathcal{J}_{\bar{x}_t}(p) + \mathcal{J}_{\operatorname{MPC},p}(\bar{x}_t,p) \right]. \label{eq:PF_closed_loop_backprop_jac}
\end{align}
Note that $\mathcal{J}_{\bar{x}_{t+1}}(p)$ depends on $\mathcal{J}_{\bar{x}_t}(p)$, and since $\bar{x}_0$ is given, we have $\mathcal{J}_{\bar{x}_0}(p)=0$. As a result, we can easily construct an algorithm that computes the conservative Jacobian of the closed-loop trajectory $\bar{x}$ for a given value of $p$ iteratively. The algorithm, summarized in \cref{alg:backprop}, can be implemented online, as the closed-loop is being simulated and the values of $\bar{x}_t$ are being measured. Note that the simulation needs to span the entire horizon $T$.

\begin{algorithm}
\caption{Backpropagation}\label{alg:backprop}
\begin{algorithmic}[1]
\Input $\bar{x}_0$, $p$.
\Init $\mathcal{J}_{\bar{x}_0}(p)=0$.
\For{$t=0$ to $T$}
\State Solve \cref{eq:PF_mpc} and set $\bar{u}_t=\mpce$.
\State Get next state $\bar{x}_{t+1}=f(\bar{x}_t,\bar{u}_t)$.
\State Compute $\mathcal{J}_{\operatorname{MPC}}(\bar{x}_0,p)$ using \cref{alg:PF_cons_jac}.
\State Compute $\mathcal{J}_{\bar{x}_{t+1}}(p)$ using \cref{eq:PF_closed_loop_backprop_jac}.
\EndFor
\State \Return $\bar{x}=\bar{x}(p)$ and $\mathcal{J}_{\bar{x}}(p)$.
\end{algorithmic}
\end{algorithm}

\begin{proposition}\label{prop:PF_cons_jac_closed_loop}
Under \cref{ass:DIFF_semi_alg,ass:DIFF_strong_convexity,ass:CLopt_f_is_path_diff}, the closed-loop trajectory $\bar{x}$ is locally Lipschitz \rz{definable} in $p$, with conservative Jacobian $\mathcal{J}_{\bar{x}}(p)$ as given by \cref{alg:backprop}.
\end{proposition}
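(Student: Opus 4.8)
The plan is to prove \cref{prop:PF_cons_jac_closed_loop} by induction on $t$, establishing simultaneously that each closed-loop state $\bar{x}_t$ is locally Lipschitz definable in $p$ and that the recursion \cref{eq:PF_closed_loop_backprop_jac} produces a valid conservative Jacobian for it. The base case $t=0$ is immediate: $\bar{x}_0$ is a given constant, hence trivially locally Lipschitz definable with $\mathcal{J}_{\bar{x}_0}(p)=\{0\}$, matching the initialization in \cref{alg:backprop}.

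For the inductive step, suppose $\bar{x}_t$ is locally Lipschitz definable in $p$ with conservative Jacobian $\mathcal{J}_{\bar{x}_t}(p)$ as computed by the algorithm. First I would observe that, by the Corollary to \cref{thm:main_thm} (applied with $\bar{p}=(\bar{x}_t,p)$), the map $(\bar{x}_t,p)\mapsto\operatorname{MPC}(\bar{x}_t,p)$ is locally Lipschitz definable, since it is extracted from the primal optimizer $y(\bar{p})$; combined with \cref{ass:CLopt_f_is_path_diff} and the fact that composition, products and sums preserve both local Lipschitz continuity and definability \cite{coste1999introduction,coste2000introduction}, the map $p\mapsto\bar{x}_{t+1}=f(\bar{x}_t(p),\operatorname{MPC}(\bar{x}_t(p),p))$ is locally Lipschitz definable. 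Then, by \cref{lemma:CJ_semialg_implies_path_diff}, $\bar{x}_{t+1}$ is path-differentiable. To identify its conservative Jacobian I would apply the chain rule \cref{lemma:CJ_chain_rule}: write $\bar{x}_{t+1}$ as the composition of the definable map $p\mapsto(\bar{x}_t(p),p)$, the definable map $(\bar{x}_t,p)\mapsto(\bar{x}_t,\operatorname{MPC}(\bar{x}_t,p))$, and finally $f$. Bookkeeping the projections $\mathcal{J}_{f,x}$, $\mathcal{J}_{f,u}$, $\mathcal{J}_{\operatorname{MPC},\bar{x}_t}$, $\mathcal{J}_{\operatorname{MPC},p}$ — and using $\mathcal{J}_{\bar{x}_t}(p)$ from the induction hypothesis for the derivative of $p\mapsto\bar{x}_t(p)$ — yields exactly \cref{eq:PF_closed_loop_backprop_jac}. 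Finally, invoking this construction for $t=0,\dots,T$ shows that \cref{alg:backprop} returns a valid conservative Jacobian $\mathcal{J}_{\bar x}(p)$ for the stacked trajectory $\bar x=\bar x(p)$, since stacking path-differentiable definable functions preserves path-differentiability and the conservative Jacobian of the stack is obtained by stacking the individual ones.

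The main obstacle I anticipate is the careful application of the chain rule across a composition of \emph{several} maps, some of which share the argument $p$ (the "diagonal" map $p\mapsto(\bar x_t(p),p)$ that feeds both slots of $\operatorname{MPC}$ and then of $f$). One must be precise that \cref{lemma:CJ_chain_rule} as stated composes two functions, so the multi-layer composition has to be unfolded layer by layer, and the splitting of a joint conservative Jacobian $\mathcal{J}_f$ into its partial projections $\mathcal{J}_{f,x}$ and $\mathcal{J}_{f,u}$ (and likewise for $\operatorname{MPC}$) must be justified — this is exactly the projection convention introduced after the definition of conservative Jacobian in the excerpt. A secondary technical point is that the product/sum of set-valued conservative Jacobians appearing in \cref{eq:PF_closed_loop_backprop_jac} should be read as the set of all products/sums of selections, and one should note the standard caveat that the right-hand side of \cref{eq:PF_closed_loop_backprop_jac} may be a (possibly strictly) larger conservative Jacobian than the minimal one — which is harmless, since any outer-semicontinuous, locally bounded, nonempty-valued selection satisfying the chain rule qualifies. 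Beyond these points the argument is a routine induction, so I would keep the write-up short and refer to \cref{appendix:appA} for the compact reformulation of the closed loop already used in \cref{eq:PF_closed_loop_compact_problem}.
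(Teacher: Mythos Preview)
Your proposal is correct and follows essentially the same approach as the paper: induction on $t$ with base case $\mathcal{J}_{\bar{x}_0}(p)=0$, the inductive step obtained by applying the chain rule \cref{lemma:CJ_chain_rule} to \cref{eq:CLopt_closed_loop_dynamics}, and local Lipschitz definability of $\bar{x}$ inherited from composition of locally Lipschitz definable maps. Your write-up is in fact more detailed than the paper's (which dispatches the whole argument in three sentences), and the technical caveats you raise about the diagonal map and the partial-Jacobian projections are valid but not elaborated in the original proof.
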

\begin{proof}
The closed-loop $\bar{x}$ is locally Lipschitz \rz{definable} since it is given by the composition of locally Lipschitz \rz{definable} functions. We now prove by induction that \cref{alg:backprop} produces $\mathcal{J}_{\bar{x}}(p)$. First, $\mathcal{J}_{\bar{x}_0}(p)=0$ since $\bar{x}_0$ is fixed \emph{a priori}. Next, suppose $\mathcal{J}_{\bar{x}_t}(p)$ has been computed correctly by the algorithm. The correctness of $\mathcal{J}_{\bar{x}_{t+1}}(p)$ follows immediately \cref{eq:PF_closed_loop_backprop_jac} and \cref{lemma:CJ_chain_rule}.
\end{proof}

\subsection{Optimization algorithm}\label{subsection:CLopt_opt}
Once the conservative Jacobian is available, we can utilize it to update the parameter $p$ with a gradient-based scheme. To guarantee convergence, it suffices to meet the conditions of \cref{alg:CJ_min_of_path_diff_func}. We, therefore, choose the following update scheme
\begin{align}\label{eq:CLopt_gd}
p^{k+1} = P_\PP [p^k - \alpha_k J],
\end{align}
for any $J = J_1 J_2$ with
\begin{align*}
J_1 \in \J_\mathcal{C}(\bar{x}^k),~~ J_2 \in \J_{\bar{x}}(p^k),
\end{align*}
where $\bar{x}^k=\bar{x}(p^k)$, and $\alpha_k$ satisfies the conditions in \cref{eq:CJ_stepsizes}. \cref{alg:PF_closed_loop} combines all the steps described so far.

\begingroup
\algrenewcommand\algorithmicindent{1.0em}
\begin{algorithm}
\caption{Closed-loop optimization scheme}\label{alg:PF_closed_loop}
\begin{algorithmic}[1]
\Input $p^0$, $\bar{x}_0$, $\{\alpha_k\}_{k\in\N}$.
\Init $k=0$.
\While{not converged}
\State Compute $\bar{x}^k=\bar{x}(p^k)$ and $\mathcal{J}_{\bar{x}}(p^k)$ with \cref{alg:backprop}.
\State Compute $J^k = J_1^k J_2^k$ with $J_1^k \in \J_\mathcal{C}(\bar{x}^k)$, $J_2^k \in \mathcal{J}_{\bar{x}}(p^k)$.
\State Update $p^{k+1}=P_\PP[p^k-\alpha_kJ]$.
\State $k\gets k+1$.
\EndWhile
\State \Return $p^*=p^k$
\end{algorithmic}
\end{algorithm}
\endgroup

As long as the map \mpc is well-defined, i.e., problem \cref{eq:PF_mpc} admits a feasible solution throughout the entirety of the execution of \cref{alg:PF_closed_loop}, we have the following.
\begin{theorem}\label{thm:CLopt_convergence}
Suppose \cref{ass:CLopt_f_is_path_diff,ass:DIFF_semi_alg,ass:DIFF_strong_convexity} hold, and that \cref{eq:PF_mpc} is feasible for all $\bar{x}_t$ and $p^k$ as setup in \cref{alg:PF_closed_loop}. Suppose $\alpha_k$ satisfies \cref{eq:CJ_stepsizes} and $\sup_k \|p^k\| < \infty$. Then $p^k$ converges to a critical point of problem \cref{eq:PF_problem}.
\end{theorem}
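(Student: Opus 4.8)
The plan is to assemble the pieces already in place and reduce \cref{thm:CLopt_convergence} to an application of \cref{lemma:CJ_stepsize_condition}. First I would observe that \cref{eq:PF_closed_loop_compact_problem} expresses problem \cref{eq:PF_problem} (equivalently \cref{eq:PF_problem_2}) as the minimization of $\mathcal{C}(\bar{x}(p))$ over $p\in\PP$ subject to $\mathcal{H}(\bar{x}(p))\le 0$; since we additionally assume \cref{eq:PF_mpc} is feasible along the whole run, the map \mpce is well-defined at every $(\bar{x}_t,p^k)$ encountered, so the closed-loop trajectory $\bar{x}(p)$ is well-defined on the relevant set. The first real step is therefore to argue that the composite objective $\Phi(p):=\mathcal{C}(\bar{x}(p))$ — together with whatever exact-penalty or indicator encoding of the state constraints $\mathcal{H}$ is being used to fold \cref{eq:PF_problem_2} into the unconstrained-in-$\mathcal{C}$ form of \cref{alg:CJ_min_of_path_diff_func} — is locally Lipschitz and \rz{definable}. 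This is immediate from \cref{prop:PF_cons_jac_closed_loop} (which gives that $\bar{x}(\cdot)$ is locally Lipschitz \rz{definable}), the fact that $\mathcal{C}$ in \cref{eq:PF_norm_cost_cl} is a positive-definite quadratic hence smooth and \rz{definable}, and \cref{lemma:CJ_chain_rule} together with closure of \rz{definability} under composition \cite{coste1999introduction}.

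Next I would identify the iterates of \cref{alg:PF_closed_loop} with those of \cref{alg:CJ_min_of_path_diff_func} applied to $\Phi$. Concretely, step~3 of \cref{alg:PF_closed_loop} selects $J^k=J_1^kJ_2^k$ with $J_1^k\in\J_{\mathcal{C}}(\bar{x}^k)$ and $J_2^k\in\J_{\bar{x}}(p^k)$; by \cref{lemma:CJ_chain_rule} this product lies in $\J_{\mathcal{C}\circ\bar{x}}(p^k)=\J_{\Phi}(p^k)$, so $J^k$ is a valid choice of $p^k\in\J_{\Phi}(p^k)$ in line~2 of \cref{alg:CJ_min_of_path_diff_func}, and step~4, $p^{k+1}=P_{\PP}[p^k-\alpha_kJ^k]$, is exactly line~3 of \cref{alg:CJ_min_of_path_diff_func}. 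Thus the sequence $\{p^k\}$ produced by \cref{alg:PF_closed_loop} is an admissible realization of \cref{alg:CJ_min_of_path_diff_func} run on $\Phi$ over the polytope $\PP$.

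Then the hypotheses of \cref{lemma:CJ_stepsize_condition} are checked one by one: $\Phi$ is path-differentiable and \rz{definable} (by \cref{lemma:CJ_semialg_implies_path_diff} and the previous step), $\PP$ is polytopic by assumption in \cref{section:PF}, the stepsizes satisfy \cref{eq:CJ_stepsizes} by hypothesis, and $\sup_k\|p^k\|<\infty$ is assumed directly. \cref{lemma:CJ_stepsize_condition} then yields that $p^k$ converges to a point $p^*$ with $0\in\J_{\Phi}(p^*)$, i.e.\ a critical point of $\mathcal{C}(\bar{x}(\cdot))$; since \cref{eq:PF_problem}, \cref{eq:PF_problem_2}, and \cref{eq:PF_closed_loop_compact_problem} are equivalent reformulations, this is precisely a critical point of \cref{eq:PF_problem}. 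I would close by remarking that the feasibility hypothesis on \cref{eq:PF_mpc} is what guarantees \cref{ass:DIFF_strong_convexity} (hence \cref{thm:main_thm} and \cref{prop:PF_cons_jac_closed_loop}) can be invoked at every iterate, and point forward to \cref{subsection:EXT_infeasibility} for the case where it fails.

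The main obstacle is not any single calculation but the bookkeeping around how the state constraints $\mathcal{H}(\bar{x}(p))\le 0$ in \cref{eq:PF_closed_loop_compact_problem} enter the function being minimized: \cref{alg:CJ_min_of_path_diff_func} and \cref{lemma:CJ_stepsize_condition} are stated for minimizing a path-differentiable \rz{definable} $\f$ over the \emph{parameter} polytope $\PP$ only, so one must be careful to confirm that the scheme actually targets the constrained problem \cref{eq:PF_problem} — either because the chosen $\mathcal{C}$ and setup make the state constraints inactive/handled separately, or because $\mathcal{H}$ has been absorbed into the objective via a \rz{definable} exact penalty (which preserves local Lipschitzness and \rz{definability}, so the argument goes through unchanged). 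Verifying that this encoding is consistent with the equivalence claimed between \cref{eq:PF_problem} and \cref{eq:PF_closed_loop_compact_problem} is the delicate part; everything else is a direct instantiation of \cref{lemma:CJ_chain_rule} and \cref{lemma:CJ_stepsize_condition}.
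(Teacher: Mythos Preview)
Your proposal is correct and follows essentially the same route as the paper: reduce to \cref{lemma:CJ_stepsize_condition} applied to $\Phi(p)=\mathcal{C}(\bar{x}(p))$ via \cref{prop:PF_cons_jac_closed_loop} and \cref{lemma:CJ_chain_rule}, after checking that \cref{alg:PF_closed_loop} is an instance of \cref{alg:CJ_min_of_path_diff_func}.

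The one point you leave open---how the state constraints $\mathcal{H}(\bar{x}(p))\le 0$ are handled---is resolved in the paper directly, without any exact penalty: since \cref{eq:PF_mpc} enforces $H_x x_{0|t}=H_x\bar{x}_t\le h_x$, the assumed feasibility of \cref{eq:PF_mpc} at every $(\bar{x}_t,p^k)$ already guarantees $\mathcal{H}(\bar{x}(p^k))\le 0$ throughout the run, so the limit $p^*$ is feasible and the critical point of the unconstrained problem $\min_{p\in\PP}\mathcal{C}(\bar{x}(p))$ is automatically a critical point of \cref{eq:PF_closed_loop_compact_problem} (and hence of \cref{eq:PF_problem}). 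The exact-penalty route you mention is precisely what \cref{subsection:EXT_infeasibility} develops for the case where this feasibility assumption fails.
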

\begin{proof}
Since $\mathcal{J}_{\bar{x}}(p^k)$ is the conservative Jacobian of $\bar{x}$ with respect to the design parameter $p$ thanks to \cref{prop:PF_cons_jac_closed_loop}, we have from \cref{lemma:CJ_stepsize_condition} that the iterates $p^k$ are guaranteed to converge to a critical point $\bar{p}$ of the problem
\begin{align}\label{eq:CLopt_unconstrained_closed_loop}
\operatorname*{minimize}_{p\in \PP} \quad \mathcal{C}(p).
\end{align}
Moreover, since the constraints $\mathcal{H}(p)\leq 0$ are automatically satisfied if \mpc is feasible throughout the entire runtime of \cref{alg:PF_closed_loop}, we conclude that the critical point $\bar{p}$ of \cref{eq:CLopt_unconstrained_closed_loop} is also a critical point of \cref{eq:PF_closed_loop_compact_problem}, which is equivalent to \cref{eq:PF_problem}. This concludes the proof.
\end{proof}
\rz{The condition $\sup_k \|p^k\|< \infty$ holds trivially if $\PP$ is compact. Otherwise, one can augment the cost function with a regularizer that ensures boundedness of the iterates, as discussed in \cite[Section 6.1]{davis2020stochastic}.}
\rz{\begin{remark}
Since the horizon of the optimization problem \cref{eq:PF_problem} is finite, we do consider the stability of the closed-loop dynamics \cref{eq:CLopt_closed_loop_dynamics}. Indeed, our method produces MPC schemes that are optimized for a specific finite-horizon task. To obtain a controller that stabilizes \cref{eq:CLopt_closed_loop_dynamics}, a simple solution would be to use the MPC controller \cref{eq:PF_mpc} with parameter $p^*$ for $t\in\Z_{[0,T]}$, and switch to a stabilizing state-feedback controller for $t \geq T$ (e.g., an LQR). Note that if $T$ is chosen appropriately large, the closed-loop state $\bar{x}_T$ should be in a neighborhood of the origin, and a simple LQR controller (obtained by linearizing the dynamics at the origin if the system is nonlinear) should suffice.
\end{remark}}
\section{Extensions}\label{section:EXT}
\subsection{Choosing \texorpdfstring{$S_t$}{St} by linearization}\label{subsection:PF_choosing_st}

The accuracy of the approximate model $S_t$ significantly impacts the control performance. To improve precision, we can allow the nominal dynamics to vary at different time-steps within the same MPC problem
\begin{align*}
x_{k+1|t} = A_{k|t} x_{k|t} + B_{k|t} u_{k|t} + c_{k|t},
\end{align*}
and construct \rz{$S_t=\{A_{k|t},B_{k|t},c_{k|t}\}_{k=0}^{N-1}$} by linearizing $f$ along the state-input trajectory $(x_{t-1},u_{t-1})$
\begin{align*}
A_{k|t} &= \left.\pdv{f(x,u)}{x}\right\lvert_{\substack{x=x_{k+1|t-1}\\u=u_{k+1|t-1}}},~~B_{k|t} = \left.\pdv{f(x,u)}{u}\right\lvert_{\substack{x=x_{k+1|t-1}\\u=u_{k+1|t-1}}},\\
c_{k|t} &= f(x_{k+1|t-1},u_{k+1|t-1})\!-\!A_{k|t} x_{k+1|t-1} \! - \! B_{k|t} u_{k+1|t-1},
\end{align*}
with $u_{N|t-1}=u_{N-1|t-1}$. Alternatively, we can use $\bar{x}_t$ in place of $x_{1|t-1}$. If the state-input trajectory $(x_t,u_t)$ predicted by the MPC at time $t$ does not deviate significantly from $(\bar{x}_t,u_{1|t-1})$, then the linearized dynamics are expected to be a good approximation of the true system dynamics.

Since $S_t$ now depends on the entire solution $y_{t-1}:=(x_{t-1},u_{t-1})$ of the MPC problem at time $t-1$, and possibly also on $\bar{x}_t$, the computation of $\J_{\bar{x}_{t+1}}(p)$ in \cref{alg:backprop} needs to be modified:
\begin{align}
\J_{\bar{x}_{t+1}}(p) &= \J_{f,x}(\bar{x}_t,\bar{u}_t) \mathcal{J}_{\bar{x}_t}(p) \! + \! \J_{f,u}(\bar{x}_t,\bar{u}_t) \cdot \notag \\
& \quad \left[ \J_{\operatorname{MPC},\bar{x}_t}(\bar{x}_t,y_{t-1},p) \J_{\bar{x}_t}(p) \right. \notag \\
& \quad + \J_{\operatorname{MPC},y_{t-1}}(\bar{x}_t,y_{t-1},p) \J_{y_{t-1}}(p) \notag \\ & \quad \left. + \, \J_{\operatorname{MPC},p}(\bar{x}_t,y_{t-1},p)\right], \label{eq:PF_backprop_linearization}
\end{align}
where we used $\operatorname{MPC}(\bar{x}_t,y_{t-1},p)$ instead of \mpce to emphasize the dependency on $y_{t-1}$. The term $\J_{y_{t-1}}(p)$ can be constructed using a simple backpropagation rule
\begin{align}
\J_{y_{t-1}}(p) &= \J_{\text{QP},\bar{x}_{t-1}}(\bar{x}_{t-1},y_{t-2},p) \J_{x_{t-1}}(p) \notag \\ & \quad + \J_{\text{QP},y_{t-2}}(\bar{x}_{t-1},y_{t-2},p) \J_{y_{t-2}}(p) \notag \\ & \quad + \J_{\text{QP},p}(\bar{x}_{t-1},y_{t-2},p) \label{eq:PF_backprop_y_tm1}
\end{align}
where $y_{t-1} =\operatorname{QP}(\bar{x}_{t-1},y_{t-2},p)$. The modified backpropagation algorithm is given in \cref{alg:backprop_linearization}. Note that we can compute $\J_{\operatorname{MPC}}$ using \cref{alg:PF_cons_jac} by setting $\bar{p}:=(\bar{x}_t,y_{t-1},p)$.

Before beginning the simulation of the system, we need to choose the linearization trajectory $y_{-1}$ for time-step $t=0$, either heuristically, or by letting $y_{-1}$ be part of $p$, thus allowing the optimization process to select the value of $y_{-1}$ that yields the best closed-loop performance.

\begin{remark}
The linearization strategy of this section can be replaced with simpler strategies like choosing a fixed $A$ and $B$ throughout the entire MPC horizon or choosing $A_{k|t}\equiv A_{t} = \pdv{f(x,u)}{x}\lvert_{\substack{x=x_t,~~~\,\\u=u_{1|t-1}}}$ and similarly for $B$ and $c$. \rz{This however may negatively impact the performance of the MPC controller, especially when the system dynamics are highly nonlinear. The choice of $S_t$ is therefore a trade-off between computational complexity and control performance. In practice, we observed that the linearization technique of this section has an overall satisfactory performance (compare \cref{section:SIM}).}
\end{remark}

\begin{algorithm}
\caption{Backpropagation with linearization}\label{alg:backprop_linearization}
\begin{algorithmic}[1]
\Input $\bar{x}_0$, $p$, $y_{-1}$.
\Init $\mathcal{J}_{y_{-1}}(p)$, $\mathcal{J}_{\bar{x}_0}(p)=0$.
\For{$t=0$ to $T$}
\State Solve \cref{eq:PF_mpc} and set $\bar{u}_t=\operatorname{MPC}(\bar{x}_t,y_{t-1},p)$.
\State Get next state $\bar{x}_{t+1}=f(\bar{x}_t,\bar{u}_t)$.
\State Compute $\J_{\operatorname{MPC}}(\bar{x}_t,y_{t-1},p)$ using \cref{alg:PF_cons_jac}.
\State Compute $\J_{y_t}(p)$ using \cref{eq:PF_backprop_y_tm1}.
\State Compute $\J_{\bar{x}_{t+1}}(p)$ using \cref{eq:PF_backprop_linearization}.
\EndFor
\State \Return $\bar{x}=\bar{x}(p)$ and $\mathcal{J}_{\bar{x}}(p)$.
\end{algorithmic}
\end{algorithm}

\subsection{State-dependent cost and constraints}\label{subsection:PF_state_dependent_mpc}
The closed-loop performance of receding-horizon MPC schemes can be greatly improved by allowing certain elements in the MPC problem to be adapted online based on the state of the system. For example, in \cite{aboudonia2020distributed} the terminal cost and constraints are constructed online as functions of the state $\bar{x}_t$. This choice is shown to enlarge the region of attraction of the scheme.

Our backpropagation framework allows the incorporation of state-dependent elements in the MPC problem by letting $H_{x,N}$, $h_{x,N}$, and $P$ be functions of both $p$ and $\bar{x}_t$. Since both $\bar{x}_t$ and $p$ are known at runtime, the MPC problem solved online is a QP in the form
\begin{align}
\begin{split}
\operatorname*{minimize}_{y} & \quad \frac{1}{2}y^\top \QQ(\bar{x}_t,p) y + \qq(\bar{x}_t,p)^\top y\\
\mathrm{subject~to} & \quad \FF(\bar{x}_t,p) y = \ff(\bar{x}_t,p),\\
& \quad \GG(\bar{x}_t,p) y \leq \gg(\bar{x}_t,p),
\end{split}\notag
\end{align}
which differs from \qp only because $\QQ$, $\FF$, and $\GG$ now depend on both $\bar{x}_t$ and $p$. As a result, we can perform closed-loop optimization using the same algorithmic procedure as in \cref{alg:PF_closed_loop} without any modification, exception made for the symbolic expression of $\QQ$, $\FF$, and $\GG$ which depend on $\bar{p}=(\bar{x}_t,p)$ instead of only $p$.
\begin{remark}
The same procedure can be applied to the case where $H_x$, $H_u$, $h_x$, $h_u$, $Q_x$ and $R_u$ depend on $\bar{x}_t$. Moreover, one can easily incorporate cost matrices $Q_x$ and $R_u$ that also depend on $y_{t-1}$, for example by linearizing the possibly nonlinear cost function $\mathcal{C}$ along the trajectory $y_{t-1}$ and adding sufficient regularization to ensure the positive definiteness of both $Q_x$ and $R_u$. We leave such cases for future work and emphasize that our framework is flexible to tune any component of the underlying MPC problem.
\end{remark}

\subsection{Non-convex cost}\label{subsection:EXT_nonconvex_cost}
Our framework easily extends to scenarios where the quadratic cost in \cref{eq:PF_problem} is replaced with more sophisticated costs that can possibly involve other terms in addition to $\bar{x}$. Consider, for example, problem \cref{eq:PF_problem} with the cost $\mathcal{C}(\bar{x},y,z,p)$, where $y:=(y_0,\dots,y_{T})$ and $z=(z_0,\dots,z_{T})$ are the primal-dual optimizers of \qp at all time-steps. Through \cref{alg:backprop_linearization} we can include any of the optimization variables in the cost and still manage to efficiently compute the gradient of the objective by storing the conservative Jacobians $\J_{y_t}(p)$ and $\J_{z_t}(p)$ and then applying the Leibniz rule
\begin{align*}
\J_\mathcal{C}(p) = \J_{\mathcal{C},\bar{x}}(p)\J_{\bar{x}}(p) + \J_{\mathcal{C},y}(p)\J_y(p) + \J_{\mathcal{C},z}(p)\J_z(p).
\end{align*}
The conservative Jacobians $\J_y$ and $\J_z$ are already available as a by-product of \cref{alg:PF_cons_jac}.

To ensure that \cref{lemma:DIFF_F_is_path_diff} is still applicable, we only require $\mathcal{C}$ to be path-differentiable jointly in its arguments. Under this condition, the results of \cref{thm:CLopt_convergence} still hold. Note that the class of path-differentiable functions is quite large, and comprises a large selection of non-convex functions.

\subsection{Dealing with infeasibility}\label{subsection:EXT_infeasibility}
So far, we have not considered the situation where \cref{eq:PF_mpc} becomes infeasible. This can happen frequently in practice since the gradient-based optimization scheme modifies the behavior of the MPC map without guaranteeing recursive feasibility. There is, however, a simple procedure that can be used to recover from infeasible scenarios. The modification comprises two steps: first we modify \cref{eq:PF_mpc} to ensure its feasibility, then we change the cost function $\mathcal{C}$ to ensure that $p$ minimizes constraint violations.

To ensure that \cref{eq:PF_mpc} is always feasible, \rz{we introduce the slack variables $\epsilon_t$ and soften the state constraints \cite{kerrigan2000soft}} (the input constraints can always be satisfied)
\begin{align}
\operatorname*{min.}_{x_t,u_t,\epsilon_t} & \quad \mathcal{P}^{\operatorname{MPC}}_\epsilon(\epsilon_t) + \|x_{N|t}\|_P^2 + \! \sum_{k=0}^{N-1} \|x_{k|t}\|_{Q_x}^2 \! + \|u_{k|t}\|_{R_u}^2 \notag \\
\mathrm{s.t.} & \quad x_{k+1|t}=A_tx_{k|t}+B_tu_{k|t}+c_t, ~ x_{0|t} = \bar{x}_t, \notag \\
& \quad H_x x_{k|t} \leq h_x + \epsilon_{k|t} , ~ H_u u_{k|t} \leq h_u, \label{eq:EXT_mpc_feasible} \\
& \quad \epsilon_{k|t} \geq 0,~ k\in\Z_{[0,N-1]}, \notag \\
& \quad H_{x,N} x_{N|t} \leq h_{x,N} + \epsilon_{k|N}, \notag
\end{align}
To avoid unnecessary constraint violation, we penalize nonzero values of $\epsilon_t$ with the penalty function $\mathcal{P}^{\operatorname{MPC}}_\epsilon(\epsilon) = c_1 \|\epsilon\|_2^2 + c_2 \|\epsilon\|_1$, with $c_1,c_2 >0$. If $c_2$ is large enough, one can prove that $\mathcal{P}^{\operatorname{MPC}}_\epsilon$ is an exact penalty function.

\begin{lemma}[\hspace{1sp}{\cite[Theorem 1]{kerrigan2000soft}}]\label{lemma:EXT_exact_penalty_MPC}
Problem \cref{eq:EXT_mpc_feasible} has the same solution as \cref{eq:PF_mpc} as long as \cref{eq:PF_mpc} admits a solution and $c_2 > \| \lambda \|_\infty$, where $\lambda$ are the multipliers associated to the inequality constraints affecting the state in \cref{eq:PF_mpc}.
\end{lemma}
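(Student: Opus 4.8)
The plan is to read \cref{eq:EXT_mpc_feasible} as an $\ell_1$/$\ell_2$ exact-penalty reformulation of \cref{eq:PF_mpc} and to show that, once $c_2$ exceeds the norm of the optimal state multipliers, the optimal slack is forced to zero. Write $(\cdot)_+$ for the componentwise positive part, denote by $f(x_t,u_t)$ the nominal MPC cost $\|x_{N|t}\|_P^2+\sum_{k=0}^{N-1}(\|x_{k|t}\|_{Q_x}^2+\|u_{k|t}\|_{R_u}^2)$, let $X$ be the polyhedron cut out by the dynamics and the input constraints $H_u u_{k|t}\le h_u$ (common to both problems), and let $g(x_t)$ stack all state-constraint residuals $H_xx_{k|t}-h_x$ together with the terminal residual $H_{x,N}x_{N|t}-h_{x,N}$. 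Then \cref{eq:PF_mpc} is $\min_{(x_t,u_t)\in X} f(x_t,u_t)$ s.t.\ $g(x_t)\le 0$, and \cref{eq:EXT_mpc_feasible} is $\min_{(x_t,u_t)\in X,\ \epsilon_t\ge 0} f(x_t,u_t)+\mathcal{P}^{\operatorname{MPC}}_\epsilon(\epsilon_t)$ s.t.\ $g(x_t)\le\epsilon_t$.

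First I would eliminate $\epsilon_t$: for fixed $(x_t,u_t)$ the scalar map $s\mapsto c_1 s^2+c_2 s$ is strictly increasing on $\R_{\geq 0}$, so the inner minimization over $\epsilon_t\ge\max\{0,g(x_t)\}$ is uniquely attained at $\epsilon_t^\star=(g(x_t))_+$; substituting, \cref{eq:EXT_mpc_feasible} becomes $\min_{(x_t,u_t)\in X}\ f(x_t,u_t)+c_1\|(g(x_t))_+\|_2^2+c_2\|(g(x_t))_+\|_1$, which agrees with $f(x_t,u_t)$ at every point feasible for \cref{eq:PF_mpc}. Next I would invoke strong duality: because \cref{eq:PF_mpc} is a feasible convex QP with only affine constraints (\cref{ass:DIFF_strong_convexity}), the partial dual obtained by dualizing only the state inequalities has no gap, so there exist multipliers $\lambda\ge 0$ such that the (unique, since $Q_x,R_u,P\succ 0$) primal optimizer $(x_t^\star,u_t^\star)$ minimizes $L(x_t,u_t):=f(x_t,u_t)+\langle\lambda,g(x_t)\rangle$ over $X$, with $\langle\lambda,g(x_t^\star)\rangle=0$ by complementary slackness, hence $L(x_t^\star,u_t^\star)=f(x_t^\star,u_t^\star)$.

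The core estimate is then: for every $(x_t,u_t)\in X$, abbreviating $v:=(g(x_t))_+\ge 0$,
\[
f(x_t,u_t)+c_1\|v\|_2^2+c_2\|v\|_1\ \ge\ f(x_t,u_t)+c_2\|v\|_1\ \ge\ f(x_t,u_t)+\langle\lambda,v\rangle\ \ge\ L(x_t,u_t)\ \ge\ f(x_t^\star,u_t^\star),
\]
where the inequalities use, respectively, $c_1\ge 0$; Hölder's inequality $\langle\lambda,v\rangle\le\|\lambda\|_\infty\|v\|_1$ together with $c_2>\|\lambda\|_\infty$ (this step is \emph{strict} as soon as $v\ne 0$); $v\ge g(x_t)$ and $\lambda\ge 0$; and the previous paragraph. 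Since the right-hand side is the optimal value of \cref{eq:PF_mpc} and is attained in \cref{eq:EXT_mpc_feasible} by $(x_t^\star,u_t^\star,0)$, while the display is strict whenever $(g(x_t))_+\ne 0$, every optimizer of \cref{eq:EXT_mpc_feasible} must have $\epsilon_t=(g(x_t))_+=0$; it is therefore feasible for \cref{eq:PF_mpc} with the same objective value, and by uniqueness it equals $(x_t^\star,u_t^\star)$.

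I expect the main obstacle to be placing the strict inequality correctly: it is exactly the hypothesis $c_2>\|\lambda\|_\infty$ that upgrades the penalty-domination bound from $\ge$ to $>$ off the feasible set, and this must be combined with the strong convexity of \cref{eq:EXT_mpc_feasible} to rule out any minimizer that pays a nonzero penalty. The remaining points are routine: keeping the dynamics equalities inside the common base set $X$ so that only the state inequalities get dualized, and noting that strong duality for \cref{eq:PF_mpc} needs no Slater point because all its constraints are affine. Since the claim is verbatim \cite[Theorem~1]{kerrigan2000soft}, one could instead simply cite that reference; the sketch above reconstructs it for completeness.
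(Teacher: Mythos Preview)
The paper does not prove this lemma; it is stated with a direct citation to \cite[Theorem~1]{kerrigan2000soft} and no argument is given. Your reconstruction of the exact-penalty argument is correct and is essentially the standard proof behind that reference: eliminate the slack to obtain a pure $\ell_1$/$\ell_2$ penalty on $(g(x_t))_+$, use strong duality for the affine-constrained convex QP to produce multipliers $\lambda$, and then dominate the penalty by the Lagrangian via H\"older, with the strict inequality $c_2>\|\lambda\|_\infty$ forcing any minimizer to have zero slack. Your final appeal to strong convexity to identify the minimizer with $(x_t^\star,u_t^\star)$ is the right closing step. As you note yourself, simply citing \cite{kerrigan2000soft} is exactly what the paper does.
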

%

To ensure that, if possible, $p$ is chosen to have no constraint violations in closed-loop, we introduce a penalty function in the objective of \cref{eq:PF_problem}
\begin{align}\label{eq:EXT_opt_feasible}
\begin{split}
\operatorname*{minimize}_{p,\bar{x},y,\epsilon} & \quad \sum_{t=0}^{T} \|\bar{x}_t\|^2_{Q_x} + \mathcal{P}_\epsilon(\epsilon)\\
\mathrm{subject~to} & \quad (y_t,u_{0|t},\epsilon_t) \sim \operatorname{MPC}(\bar{x}_t,y_{t-1},p),\\
& \quad \bar{x}_{t+1}=f(\bar{x}_t,u_{0|t}), ~ \bar{x}_0,\,y_{-1}~\text{given},\\
& \quad t\in\Z_{[0,T]},
\end{split}
\end{align}
where $\epsilon:=(\epsilon_0,\dots,\epsilon_{T-1})$ and $\mathcal{P}_\epsilon(\epsilon)=c_3 \|\epsilon\|_1$ for some $c_3>0$. The introduction of $\mathcal{P}_\epsilon$ should ensure that the solution of \cref{eq:EXT_opt_feasible} satisfies $\epsilon_t=0$ for all $t\in\Z_{[0,T-1]}$. If this is the case, the optimizers $x_t$ and $u_t$ of each MPC problem \cref{eq:EXT_mpc_feasible} satisfy the nominal constraints \cref{eq:PF_constraints}, thus ensuring that $\bar{x}$ and $\bar{u}$ do too.

With some reformulation, we can equivalently write \cref{eq:EXT_opt_feasible} as
\begin{align}\label{eq:EXT_opt_feasible_2}
\underset{p}{\operatorname{minimize}} & \quad \mathcal{C}(p) + \mathcal{P}_\epsilon(\epsilon(p)),
\end{align}
where $\mathcal{C}$ and $\epsilon$ are locally Lipschitz \rz{definable} functions of $p$, and $\epsilon$ is the function that maps $p$ to the value of $\epsilon$ that solves \cref{eq:EXT_opt_feasible}. Since closed-loop constraint satisfaction is equivalent to $\epsilon(p)=0$, the goal is to obtain a solution of
\begin{align}\label{eq:EXT_opt_feasible_with_explicit_constr_2}
\begin{split}
\underset{p}{\operatorname{minimize}} & ~~ \mathcal{C}(p) \quad \text{s.t.} ~~ \epsilon(p)=0.
\end{split}
\end{align}
Under certain conditions on $\mathcal{P}_\epsilon$ and on the nature of the minimizers of \cref{eq:EXT_opt_feasible_with_explicit_constr_2}, we can prove that \cref{eq:EXT_opt_feasible_2} and \cref{eq:EXT_opt_feasible_with_explicit_constr_2} are equivalent, \rz{in which case $\mathcal{P}_\epsilon$ is an \emph{exact penalty function}}. For this, we need the following definition.
\begin{definition}
Let $p^*$ be such that $\epsilon(p^*)=0$. Problem \cref{eq:EXT_opt_feasible_with_explicit_constr_2} is \textit{calm} at $p^*$ if there exists some $\bar{\alpha}\geq 0$ and some $\epsilon>0$ such that for all $(p,u)$ with $\|p-p^*\|\leq \epsilon$ and $\epsilon(p)=u$, we have
\begin{align*}
\mathcal{C}(p)+\bar{\alpha}\|u\|\geq \mathcal{C}(p^*).
\end{align*}
\end{definition}
Calmness is a rather weak regularity condition that is verified in many situations. In finite dimensions, it holds for a dense subset of the perturbations \cite[Proposition 2.1]{burke1991exact}. For calm minimizers of \cref{eq:EXT_opt_feasible_2}, we have the following.
\begin{proposition}[{\hspace{1sp}{\cite[Theorem 2.1]{burke1991exact}}}]\label{prop:EXT_exact_penalty}
The set of local minima $p^*$ of \cref{eq:EXT_opt_feasible_with_explicit_constr_2} for which \cref{eq:EXT_opt_feasible_with_explicit_constr_2} is calm at $p^*$ coincide with the local minima of \cref{eq:EXT_opt_feasible_2} provided that the penalty parameter $c_3$ is chosen at least as large as the calmness modulus.
\end{proposition}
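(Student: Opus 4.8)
The plan is to derive \cref{prop:EXT_exact_penalty} as a direct instance of Burke's exact‑penalization theorem \cite[Theorem 2.1]{burke1991exact}. First I would recast \cref{eq:EXT_opt_feasible_with_explicit_constr_2} in the abstract form $\min_p \mathcal{C}(p)$ subject to $\epsilon(p)\in C$ with $C=\{0\}$, so that the distance of $\epsilon(p)$ to the feasible set equals $\|\epsilon(p)\|$; the penalized objective in \cref{eq:EXT_opt_feasible_2} is then $\mathcal{C}(p)+c_3\|\epsilon(p)\|_1$, i.e.\ exactly the distance‑type penalty used by Burke, up to the finite constants relating $\|\cdot\|_1$ and $\|\cdot\|$ on the slack space. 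I would also record that $\mathcal{C}$ and $\epsilon$ are continuous — in fact locally Lipschitz definable, by the same reasoning as in \cref{prop:PF_cons_jac_closed_loop} applied to the softened scheme \cref{eq:EXT_mpc_feasible}, with $\epsilon(p)$ well defined since $\mathcal{P}^{\operatorname{MPC}}_\epsilon$ is an exact penalty (\cref{lemma:EXT_exact_penalty_MPC}) — and that $\mathcal{C}\geq 0$; this is ample regularity for Burke's result.

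For the inclusion ``calm local minimizer of \cref{eq:EXT_opt_feasible_with_explicit_constr_2} $\Rightarrow$ local minimizer of \cref{eq:EXT_opt_feasible_2}'' I would argue directly. Let $p^*$ be feasible and such that \cref{eq:EXT_opt_feasible_with_explicit_constr_2} is calm at $p^*$ with modulus $\bar{\alpha}\leq c_3$ and radius $\epsilon>0$. For every $p$ with $\|p-p^*\|\leq\epsilon$, set $u=\epsilon(p)$; calmness together with $c_3\geq\bar{\alpha}$ and $\|u\|_1\geq\|u\|$ yields
\[
\mathcal{C}(p) + c_3\|\epsilon(p)\|_1 \;\geq\; \mathcal{C}(p) + \bar{\alpha}\,\|u\| \;\geq\; \mathcal{C}(p^*) \;=\; \mathcal{C}(p^*) + \mathcal{P}_\epsilon(\epsilon(p^*)),
\]
since $\epsilon(p^*)=0$; hence $p^*$ is a local minimizer of \cref{eq:EXT_opt_feasible_2}.

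The reverse inclusion is where I expect the actual difficulty, and it is the step I would delegate to \cite[Theorem 2.1]{burke1991exact}: a local minimizer $p^\dagger$ of the penalized objective need not be feasible \emph{a priori}, and the whole point of the exact‑penalty theorem is that, once $c_3$ is at least the calmness modulus, such a $p^\dagger$ must satisfy $\epsilon(p^\dagger)=0$; restricting the penalized local‑optimality inequality to the feasible set then shows $p^\dagger$ is a local minimizer of \cref{eq:EXT_opt_feasible_with_explicit_constr_2} and simultaneously exhibits a calmness estimate there. Two bookkeeping points I would carry through carefully: (i) passing between the $\ell_1$ penalty and the Euclidean norm appearing in the calmness definition costs only the finite factor $\sqrt{\dim\epsilon}$, which I would keep explicit so the ``modulus'' and ``$c_3$'' thresholds are stated consistently; and (ii) whatever lower‑semicontinuity/properness Burke's statement assumes on the data is immediate here from continuity of $\mathcal{C}$ and $\epsilon$ and nonnegativity of $\mathcal{C}$. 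Combining the two inclusions gives the claimed coincidence of the two solution sets.
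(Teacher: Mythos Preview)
The paper does not supply its own proof of this proposition: it is stated purely as a citation of \cite[Theorem~2.1]{burke1991exact}, with no argument beyond the reference. Your proposal is therefore not competing with a proof in the paper but rather filling in the instantiation details the paper omits; your plan --- cast \cref{eq:EXT_opt_feasible_with_explicit_constr_2} in Burke's abstract form, verify the forward inclusion directly from the calmness inequality, and invoke Burke for the reverse inclusion and feasibility of penalized minimizers --- is exactly the intended route, and your bookkeeping remark about the $\ell_1$/$\ell_2$ norm discrepancy between $\mathcal{P}_\epsilon$ and the calmness definition (the paper uses $\|\cdot\|=\|\cdot\|_2$) is a genuine detail the paper glosses over.
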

Generally, it may be challenging to obtain an accurate estimate of the calmness module. Nevertheless, for practical purposes, a large enough value of $c_3$ typically produces the desired effect $\epsilon=0$.

With this in mind, we can use \cref{alg:PF_closed_loop} replacing $\mathcal{C}$ with $\mathcal{C}+\mathcal{P}_\epsilon$ and the MPC problem \cref{eq:PF_mpc} with \cref{eq:EXT_mpc_feasible}. If $c_2$ and $c_3$ are sufficiently large, and under the calmness assumption of \cref{prop:EXT_exact_penalty}, we can guarantee convergence to a local minimizer of the problem with constraints \cref{eq:EXT_opt_feasible_with_explicit_constr_2}. \rz{Combining \cref{lemma:EXT_exact_penalty_MPC} and \cref{prop:EXT_exact_penalty} yields the following.}
\begin{theorem}\label{thm:EXT_rec_feas}\rz{%
Let \cref{ass:DIFF_semi_alg,ass:DIFF_strong_convexity,ass:CLopt_f_is_path_diff} hold, and let $p^*$ be the optimal parameter obtained with \cref{alg:backprop} applied to \cref{eq:EXT_opt_feasible_2}. If \cref{eq:EXT_opt_feasible_with_explicit_constr_2} is calm at $p^*$ and $c_2$ and $c_3$ are sufficiently large, then the MPC controller given in \cref{eq:PF_mpc} (without constraint relaxation) is recursively feasible for the dynamics \cref{eq:CLopt_closed_loop_dynamics}, and the closed-loop constraints \cref{eq:PF_constraints} are satisfied for all $t\in\Z_{[0,T]}$.}
\end{theorem}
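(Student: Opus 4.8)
The plan is to assemble the desired result by chaining together the convergence guarantee of \cref{thm:CLopt_convergence} with the two exact-penalty statements already available, namely \cref{lemma:EXT_exact_penalty_MPC} (exactness of $\mathcal{P}^{\operatorname{MPC}}_\epsilon$ at the MPC level) and \cref{prop:EXT_exact_penalty} (exactness of $\mathcal{P}_\epsilon$ at the closed-loop level). First I would check that the softened MPC problem \cref{eq:EXT_mpc_feasible} is itself an instance of the framework of \cref{section:DIFF}: it is a QP, it is always feasible (the slacks $\epsilon_{k|t}$ can absorb any state-constraint violation, and the input constraints are feasible by assumption), and with sufficient regularization ($c_1>0$, together with $P,R_u,Q_x\succ 0$) its cost is strongly convex, so \cref{ass:DIFF_strong_convexity} holds; \cref{ass:DIFF_semi_alg} and \cref{ass:CLopt_f_is_path_diff} are inherited from the hypotheses since the added penalty $\mathcal{P}^{\operatorname{MPC}}_\epsilon$ and the slacked constraints are piecewise polynomial, hence definable. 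Consequently \cref{alg:backprop} (in its linearized form \cref{alg:backprop_linearization}) correctly computes the conservative Jacobian $\mathcal{J}_{\bar{x},\epsilon}(p)$ of the closed-loop state-and-slack trajectory, and \cref{alg:PF_closed_loop} applied to the augmented objective $\mathcal{C}+\mathcal{P}_\epsilon$ produces iterates $p^k$ converging to a critical point of \cref{eq:EXT_opt_feasible_2}, by the same argument as in \cref{thm:CLopt_convergence}.

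Next I would invoke \cref{prop:EXT_exact_penalty}: since \cref{eq:EXT_opt_feasible_with_explicit_constr_2} is calm at $p^*$ by hypothesis and $c_3$ is chosen at least as large as the calmness modulus, $p^*$ is a local minimizer of the constrained problem \cref{eq:EXT_opt_feasible_with_explicit_constr_2}, which forces $\epsilon(p^*)=0$. Unwinding the definition of $\epsilon(p)$: $\epsilon(p^*)=0$ means that at the parameter $p^*$, every MPC problem \cref{eq:EXT_mpc_feasible} solved along the closed-loop trajectory returns a solution with all slacks equal to zero. A QP solution with zero slack is feasible for the original (unslacked) constraints, and by \cref{lemma:EXT_exact_penalty_MPC} — here I would note that $c_2$ large enough, in particular exceeding $\|\lambda\|_\infty$ along the trajectory, is exactly the hypothesis — it coincides with the solution of the original MPC \cref{eq:PF_mpc}. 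Hence at $p=p^*$ the original MPC problem \cref{eq:PF_mpc} is feasible at every $\bar{x}_t$ encountered in closed loop, which is precisely recursive feasibility of the controller \cref{eq:PF_mpc} for the dynamics \cref{eq:CLopt_closed_loop_dynamics}, and the predicted states $x_{k|t}$ (in particular $x_{0|t}=\bar{x}_t$ and $u_{0|t}$) satisfy \cref{eq:PF_constraints} for all $t\in\Z_{[0,T]}$.

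The main obstacle I anticipate is the interface between the two penalty mechanisms: the closed-loop exact-penalty result \cref{prop:EXT_exact_penalty} only guarantees $\epsilon(p^*)=0$, i.e. that the \emph{outer} slacks vanish, but to apply \cref{lemma:EXT_exact_penalty_MPC} one needs the \emph{MPC-level} condition $c_2>\|\lambda\|_\infty$ to hold along the actual closed-loop trajectory generated by $p^*$ — and the multipliers $\lambda$ are themselves trajectory-dependent quantities that are only known \emph{a posteriori}. I would handle this by arguing that along the (necessarily bounded, since $\sup_k\|p^k\|<\infty$ and the horizon $T$ is finite) closed-loop trajectory there are only finitely many MPC instances, each with uniformly bounded optimal multipliers under \cref{ass:DIFF_strong_convexity} and LICQ, so a finite uniform bound on $\|\lambda\|_\infty$ exists and ``$c_2$ sufficiently large'' can be made precise; this is also why the theorem statement hedges with ``sufficiently large $c_2$ and $c_3$'' rather than giving explicit constants. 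A secondary subtlety worth flagging is that $p^*$ is only a \emph{critical point} of \cref{eq:EXT_opt_feasible_2} in general, whereas \cref{prop:EXT_exact_penalty} speaks of \emph{local minima}; I would either strengthen the hypothesis to assume $p^*$ is a local minimizer (reasonable, since \cref{alg:PF_closed_loop} is a descent-type scheme) or remark that calmness at a critical point together with the penalty construction already suffices to rule out $\epsilon(p^*)\neq 0$.
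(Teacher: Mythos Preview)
Your proposal is correct and follows exactly the approach the paper takes: the paper's own ``proof'' is the single sentence preceding the theorem --- ``Combining \cref{lemma:EXT_exact_penalty_MPC} and \cref{prop:EXT_exact_penalty} yields the following'' --- and you have simply spelled out what that combination entails. Your observation about the critical-point-versus-local-minimum gap is a genuine subtlety that the paper glosses over; the paper implicitly assumes $p^*$ is a local minimizer (consistent with its use of ``optimal parameter'' in the statement), and your suggested fixes are the natural ones.
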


\section{Simulation example}\label{section:SIM}
All simulations are done in CasADi \cite{andersson2019casadi} with the active set solver DAQP \cite{arnstrom2022dual} on a laptop with $32$ GB of RAM and an Intel(R) Core (TM) processor i7-1165G7 @ 2.80GHz. The code is available and open source\footnote{At the link \url{https://gitlab.nccr-automation.ch/rzuliani/bp_mpc_improving_closed_loop_performance_of_mpc}}. \rz{Average computation times for each BP-MPC iterations (including the simulation of $T$ time-steps and the computation of the conservative Jacobians) are 499.130 ms and 205.486} ms for the examples in \cref{subsection:SIM_cart_pend,subsec:SIM_loss_of_feasibility}, respectively.

\subsection{Input-constrained example}\label{subsection:SIM_cart_pend}
We begin by deploying our optimization scheme to solve problem \cref{eq:PF_problem_2} for the continuous time pendulum on cart system of \cite{guemghar2002predictive}
\begin{subequations}
\label{eq:SIM_ode}
\begin{align}
\ddot{x}(t) &= \frac{m\mu g \sin(\phi) - \mu \cos(\phi)(u + \mu \dot{\phi}^2\sin(\phi))}{mJ-\mu^2\cos(\phi)^2},\\
\ddot{\phi} &= \frac{J(u+\mu \dot{\phi}^2\sin(\phi))-\mu^2g\sin(\phi)\cos(\phi)}{mJ-\mu^2\cos(\phi)^2},
\end{align}
\end{subequations}
where $x$ and $\dot{x}$ are the position and velocity of the cart, respectively, and $\phi$ and $\dot{\phi}$ are the angular position and velocity of the pendulum, respectively. The goal is to steer the system to the upright equilibrium position $\bar{x}=(0,0,0,0)$ starting from $\bar{x}_0=(0,0,-\pi,0)$ (i.e., pendulum down). For the time being, we only consider the input constraints $\bar{u}(t)\in[-4,4]$ and postpone state constraints to \cref{subsec:SIM_loss_of_feasibility}. We discretize the \rz{nonlinear} ODE \cref{eq:SIM_ode} using Runge-Kutta 4 with a sampling time of $0.015$ seconds. The closed-loop objective is to minimize
\begin{align}\label{eq:SIM_cl_objective}
\mathcal{C}(\bar{x},\bar{u}) = \sum_{t=0}^{170} \|\bar{x}_t\|^2_{Q_x} + 10^{-6} \|\bar{u}_t\|^2,
\end{align}
with $Q_x=\operatorname{diag}(100,1,100,1)$. The MPC \cref{eq:PF_mpc} utilizes the same input constraints and state cost matrix $Q_x$, and an input and terminal cost parameterized as
\begin{align*}
R_u = p_0^2 + 10^{-6} ,~ P = \tilde{P}\tilde{P}^\top+ 10^{-8} I,~ \tilde{P}=\left[\begin{smallmatrix}
p_1&0&0&0\\
p_2&p_3&0&0\\
p_4&p_5&p_6&0\\
p_7&p_8&p_9&p_{10}
\end{smallmatrix}\right].
\end{align*}
We set $p=(p_0,\dots,p_{10})$, initialized with $p_0=0$ and $P$ equal to the solution of the discrete-time Algebraic Riccati equation (computed on the linearized dynamics at the origin). Note that this choice of $P$ and $R$ ensures that $P \succ 0$, and $R \succ 0$ for all $p$. We use the linearization strategy of \cref{subsection:PF_choosing_st} to obtain linear dynamics for \cref{eq:PF_mpc}. We choose a short horizon of $N=11$. In \cref{alg:PF_closed_loop} we set
\begin{align*}
\alpha_k = \frac{\rho\operatorname{log}(k+1)}{(k+1)^\eta},
\end{align*}
which fullfills the assumptions in \cref{thm:CLopt_convergence} for any $\eta\in(0.5,1]$ and $\rho>0$. Through manual tuning, we chose $\rho=5\cdot 10^{-4}$ and $\eta=0.51$.

\cref{fig:linear_iter} shows the evolution of the relative difference between the closed-loop cost attained by applying \cref{alg:PF_closed_loop} and the globally optimal cost for problem \cref{eq:PF_problem} for different numbers of iterations. Here the global optimum is obtained by solving \cref{eq:PF_problem} directly using the nonlinear solver IPOPT \cite{biegler2009large}. Note that this coincides with the solution of a nonlinear MPC problem with no terminal cost and a control horizon greater than or equal to $170$ time steps, after which the system reaches the origin. The suboptimality is negligible (less than $0.01\%$) after only a few iterations. The figure additionally shows that our method achieves a lower cost compared to an MPC with $R_u=10^{-6}$ and a fixed terminal cost derived from solving the Discrete Time Algebraic Riccati Equation (DARE), where $A$ and $B$ are chosen as the linearized dynamics at the origin.

\cref{fig:linear_time} shows the closed-loop trajectories of the horizontal and angular position, and of the input under different control policies. We note that the trajectory obtained after $10000$ iterations of \cref{alg:PF_closed_loop} effectively coincides with the optimal one, whereas the one obtained by using $p^0$ in \cref{eq:PF_mpc} differs substantially.

\rz{For this simulation example, the linearization procedure of \cref{subsection:PF_choosing_st} is crucial. Using a fixed linear model, obtained by linearizing the dynamics at the origin, results in an MPC controller that is not able to stabilize the system given the same horizon $N=11$. This highlights the importance of choosing a sufficiently accurate linear prediction model for the MPC problem. In our experience, the linearization procedure outlined in \cref{subsection:PF_choosing_st} suffices in most cases.}

\rz{In \cref{fig:nlp_no_dare}, we compare the performance of our scheme against a nonlinear MPC controller with different control horizons. The NMPC is implemented using the SQP method offered by Acados \cite{verschueren2022acados} with the solver HPIPM. At every time-step, we warm-start the next NMPC using the solution obtained in the previous time-step. At the initial time-step, the warm start trajectory is obtained by solving the NMPC problem with horizon $T$ (similar results can be obtained with shorter horizons). The initial warm starting, which we decided to add to make the comparison with our method more fair, is crucial to ensure fast convergence of the solver and to avoid numerical failures. As the horizon $N$ grows, the suboptimality of the nonlinear controller decreases; however, it never reaches the performance of our controller, which utilizes a fixed horizon $N=11$. Coincidentally, the worst-case computation time needed to solve the nonlinear MPC problem grows significantly.}

\rz{Tuning the nonlinear MPC can significantly improve its performance. This is showcased in \cref{fig:nlp}, where we added the terminal cost $x_{N|t}^\top P x_{N|t}$ (with $P$ chosen as the solution of the Algebraic Riccati Equation for the linearized dynamics at the origin) to the nonlinear MPC. In this case, the NMPC is able to outperform our scheme for horizons $N\geq 25$. For $N=25$, however, the NMPC requires about $10$ times more computation time in the worst-case scenario compared to the worst-case scenario of our scheme. If the horizon of the NMPC is chosen equal to ours (i.e., $N=11$), then our scheme attains a cost that is $10^5$ smaller compared to the NMPC.}

One might argue that BP-MPC is unnecessary when the system dynamics \cref{eq:SIM_ode} are known and noise-free. The optimal performance can more efficiently be obtained by solving a nonlinear trajectory optimization problem with a horizon larger than $170$ and applying the optimal input $u_t^{\texttt{t\_opt}}$ open loop. This choice, however, is very fragile against process noise. \cref{fig:nlp_noise} shows the closed-loop cost of our scheme (applied in receding horizon) and that of $u_t^{\texttt{t\_opt}}$ (applied in open loop), assuming that the dynamics \cref{eq:SIM_ode} are affected by a stochastic additive noise sampled uniformly from the set $\{ 0 \} \times [0,w_\text{max}] \times [0,w_\text{max}] \times \{ 0 \} $, for different values of $w_\text{max}$. Our scheme compensates for the noise, maintaining good performance.

\rz{\cref{fig:robustness} shows the percentage suboptimality (i.e., the ratio between the closed-loop cost attained by the controller, and the best achievable cost) of our controller (blue line) for $1000$ different initial conditions sampled from the set ${\bar{x}_0 + [\omega_1,\omega_2,0,0]}$, with $\omega_1,\omega_2$ sampled uniformly from the interval $[-0.02,0.02]$. We can see that the tuned MPC (with horizon $11$) performs well for about $90\%$ of all samples, but achieves a much larger cost in about $10\%$ of the cases. In addition, our scheme outperforms a nonlinear MPC with horizon $N=19$ (with terminal cost set to the solution of the DARE), and outperforms a NMPC with $N=20$ in about $90\%$ of the cases. Note however that a NMPC with horizon $N=21$ performs strictly better than our tuned controller. This indicates that the optimized MPC may not generalize well to unseen initial conditions. The question of how to robustify the tuning algorithm is a promising direction for future work.}

\begin{figure}
\centering
\input{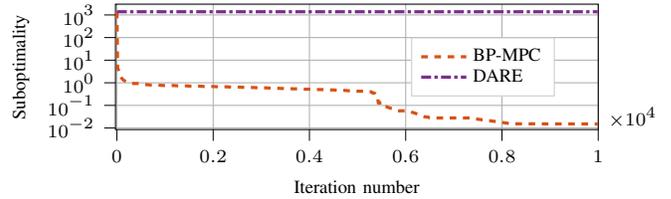}
\caption{Suboptimality between closed-loop cost and optimal cost.}\label{fig:linear_iter}
\end{figure}


\begin{figure}
\centering
\input{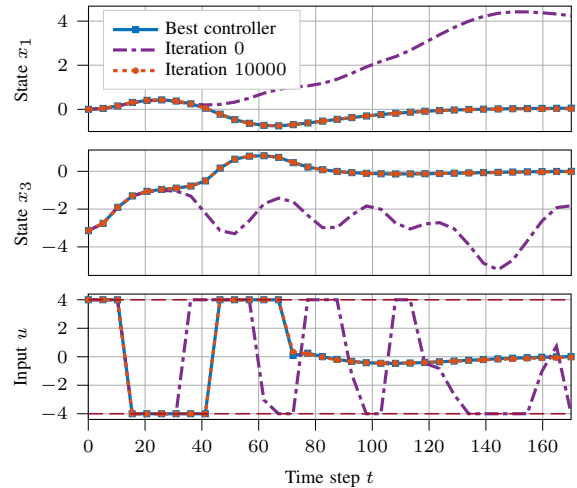}
\caption{Comparison of closed-loop state and input trajectories.}\label{fig:linear_time}
\end{figure}

\begin{figure}
\centering
\begin{tikzpicture}

\definecolor{brown1611946}{RGB}{161,19,46}
\definecolor{darkcyan0113188}{RGB}{0,113,188}
\definecolor{darkgray176}{RGB}{176,176,176}

\begin{axis}[
clip mode=individual,
height=4cm,
label style={font=\scriptsize},
legend style={font=\scriptsize},
log basis y={10},
tick align=outside,
tick label style={font=\scriptsize},
tick pos=left,
width=7.5cm,
x grid style={darkgray176},
xlabel={NLP horizon},
xmajorgrids,
xmin=11, xmax=70,
xtick style={color=black},
y grid style={darkgray176},
ylabel=\textcolor{brown1611946}{Suboptimality},
ymajorgrids,
xlabel style={yshift=8pt},
extra x ticks = {11},
extra x tick labels = {11},
extra x tick style = {yshift=0.8pt},
ymin=0.005, ymax=2000,
ymode=log,
ytick style={color=black},
ytick={0.0001,0.001,0.01,0.1,1,10,100,1000,10000,100000},
yticklabels={
  \(\displaystyle {10^{-4}}\),
  \(\displaystyle {10^{-3}}\),
  \(\displaystyle {10^{-2}}\),
  \(\displaystyle {10^{-1}}\),
  \(\displaystyle {10^{0}}\),
  \(\displaystyle {10^{1}}\),
  \(\displaystyle {10^{2}}\),
  \(\displaystyle {10^{3}}\),
  \(\displaystyle {10^{4}}\),
  \(\displaystyle {10^{5}}\)
}
]
\addplot [very thick, brown1611946, dash pattern=on 4.5pt off 1.5pt on 1.5pt off 1.5pt]
table {%
11 1143.02790031949
12 1123.04124923121
13 1012.95876206537
14 804.696306548504
15 647.877309421141
16 535.473258777775
17 431.755595160093
18 339.241533347514
19 259.931620724502
20 194.96671746536
21 143.750595431189
22 104.815309479343
23 76.078347879327
24 55.2840645845239
25 40.5786430744142
26 30.1169559369654
27 22.5788375994746
28 17.0709436269953
29 12.9968121409048
30 9.92408031950358
31 7.58865030112985
32 5.78418094845327
33 4.41450715390543
34 3.40915190624358
35 2.62931570247724
36 2.05908824460756
37 1.64456921797366
38 1.32765951065478
39 1.0679648094071
40 0.857728782068607
41 0.691020503512535
42 0.555099210386488
43 0.443411211836792
44 0.352717635293986
45 0.278830858053286
46 0.220161288943966
47 0.173661625091076
48 0.137626660429296
49 0.109535982472724
50 0.0876249796343284
51 0.0707559967685338
52 0.0581343187628857
53 0.0487079557635696
54 0.0418150056356618
55 0.0374216308182807
56 0.0348950235734925
57 0.0334373581078814
58 0.0331704246809853
59 0.0329966753975488
60 0.032745186577154
61 0.0329616671918091
62 0.0331529728971011
63 0.0332233343391798
64 0.0334861452166267
65 0.0335191183356469
66 0.0332953342683509
67 0.0325327671135041
68 0.0313456866612052
69 0.0298526752786688
70 0.0281661922196027
};
\addplot [very thick, brown1611946]
table {%
11 0.0150708437960195
12 0.0150708437960195
13 0.0150708437960195
14 0.0150708437960195
15 0.0150708437960195
16 0.0150708437960195
17 0.0150708437960195
18 0.0150708437960195
19 0.0150708437960195
20 0.0150708437960195
21 0.0150708437960195
22 0.0150708437960195
23 0.0150708437960195
24 0.0150708437960195
25 0.0150708437960195
26 0.0150708437960195
27 0.0150708437960195
28 0.0150708437960195
29 0.0150708437960195
30 0.0150708437960195
31 0.0150708437960195
32 0.0150708437960195
33 0.0150708437960195
34 0.0150708437960195
35 0.0150708437960195
36 0.0150708437960195
37 0.0150708437960195
38 0.0150708437960195
39 0.0150708437960195
40 0.0150708437960195
41 0.0150708437960195
42 0.0150708437960195
43 0.0150708437960195
44 0.0150708437960195
45 0.0150708437960195
46 0.0150708437960195
47 0.0150708437960195
48 0.0150708437960195
49 0.0150708437960195
50 0.0150708437960195
51 0.0150708437960195
52 0.0150708437960195
53 0.0150708437960195
54 0.0150708437960195
55 0.0150708437960195
56 0.0150708437960195
57 0.0150708437960195
58 0.0150708437960195
59 0.0150708437960195
60 0.0150708437960195
61 0.0150708437960195
62 0.0150708437960195
63 0.0150708437960195
64 0.0150708437960195
65 0.0150708437960195
66 0.0150708437960195
67 0.0150708437960195
68 0.0150708437960195
69 0.0150708437960195
70 0.0150708437960195
};
\end{axis}

\begin{axis}[
axis y line*=right,
clip mode=individual,
height=4cm,
label style={font=\scriptsize},
legend style={font=\scriptsize},
log basis y={10},
tick align=outside,
tick label style={font=\scriptsize},
width=7.5cm,
x grid style={darkgray176},
xmin=11, xmax=70,
xtick pos=left,
xtick style={color=black},
xtick = {},
y grid style={darkgray176},
ylabel=\textcolor{darkcyan0113188}{Computation time [ms]},
ymin=0.137007681999914, ymax=11.3777931868426,
ymode=log,
xmajorticks = false,
xticklabels={},
ytick pos=right,
ytick style={color=black},
ytick={0.01,0.1,1,10,100,1000},
yticklabel style={anchor=west},
yticklabels={
  \(\displaystyle {10^{-2}}\),
  \(\displaystyle {10^{-1}}\),
  \(\displaystyle {10^{0}}\),
  \(\displaystyle {10^{1}}\),
  \(\displaystyle {10^{2}}\),
  \(\displaystyle {10^{3}}\)
}
]
\addplot [very thick, darkcyan0113188, dash pattern=on 4.5pt off 1.5pt on 1.5pt off 1.5pt]
table {%
11 1.45435333251953
12 1.39999389648438
13 1.6169548034668
14 1.61004066467285
15 1.00564956665039
16 1.04379653930664
17 1.13558769226074
18 1.19900703430176
19 1.2214183807373
20 1.20449066162109
21 1.2664794921875
22 1.54376029968262
23 1.31702423095703
24 1.45053863525391
25 1.49035453796387
26 2.3338794708252
27 1.93238258361816
28 2.1662712097168
29 1.63793563842773
30 1.8153190612793
31 1.85823440551758
32 1.7545223236084
33 1.87110900878906
34 2.24661827087402
35 2.41279602050781
36 2.62093544006348
37 2.20680236816406
38 3.2651424407959
39 3.84306907653809
40 4.11105155944824
41 7.02929496765137
42 3.94916534423828
43 4.44293022155762
44 4.16994094848633
45 4.30464744567871
46 4.45842742919922
47 5.6304931640625
48 5.15270233154297
49 4.58264350891113
50 9.3071460723877
51 4.85682487487793
52 8.03136825561523
53 3.15403938293457
54 3.16882133483887
55 3.46970558166504
56 3.32117080688477
57 4.22191619873047
58 4.03499603271484
59 3.51572036743164
60 3.76176834106445
61 3.76677513122559
62 3.20696830749512
63 3.26728820800781
64 3.86810302734375
65 3.50427627563477
66 3.98087501525879
67 3.85141372680664
68 4.27746772766113
69 4.45771217346191
70 4.1196346282959
};
\addplot [very thick, darkcyan0113188]
table {%
11 0.167489051818848
12 0.167489051818848
13 0.167489051818848
14 0.167489051818848
15 0.167489051818848
16 0.167489051818848
17 0.167489051818848
18 0.167489051818848
19 0.167489051818848
20 0.167489051818848
21 0.167489051818848
22 0.167489051818848
23 0.167489051818848
24 0.167489051818848
25 0.167489051818848
26 0.167489051818848
27 0.167489051818848
28 0.167489051818848
29 0.167489051818848
30 0.167489051818848
31 0.167489051818848
32 0.167489051818848
33 0.167489051818848
34 0.167489051818848
35 0.167489051818848
36 0.167489051818848
37 0.167489051818848
38 0.167489051818848
39 0.167489051818848
40 0.167489051818848
41 0.167489051818848
42 0.167489051818848
43 0.167489051818848
44 0.167489051818848
45 0.167489051818848
46 0.167489051818848
47 0.167489051818848
48 0.167489051818848
49 0.167489051818848
50 0.167489051818848
51 0.167489051818848
52 0.167489051818848
53 0.167489051818848
54 0.167489051818848
55 0.167489051818848
56 0.167489051818848
57 0.167489051818848
58 0.167489051818848
59 0.167489051818848
60 0.167489051818848
61 0.167489051818848
62 0.167489051818848
63 0.167489051818848
64 0.167489051818848
65 0.167489051818848
66 0.167489051818848
67 0.167489051818848
68 0.167489051818848
69 0.167489051818848
70 0.167489051818848
};
\end{axis}

\end{tikzpicture}
\caption{Comparison of the relative suboptimality and the worst-case computation times (dashed lines) of a nonlinear MPC with different horizon lengths, and our scheme with fixed horizon $N=11$ (solid lines).}\label{fig:nlp_no_dare}
\end{figure}

\begin{figure}
\centering
\begin{tikzpicture}

\definecolor{brown1611946}{RGB}{161,19,46}
\definecolor{darkcyan0113188}{RGB}{0,113,188}
\definecolor{darkgray176}{RGB}{176,176,176}

\begin{axis}[
clip mode=individual,
height=4cm,
label style={font=\scriptsize},
legend style={font=\scriptsize},
log basis y={10},
tick align=outside,
tick label style={font=\scriptsize},
tick pos=left,
width=7.5cm,
x grid style={darkgray176},
xlabel={NLP horizon},
xmajorgrids,
xmin=11, xmax=31,
y grid style={darkgray176},
extra x ticks = {11},
extra x tick labels = {11},
extra x tick style = {yshift=0.8pt},
xlabel style={yshift=8pt},
ylabel=\textcolor{brown1611946}{Suboptimality},
ymajorgrids,
ymin=0.005, ymax=2000,
ymode=log,
ytick style={color=black},
ytick={0.0001,0.001,0.01,0.1,1,10,100,1000,10000,100000},
yticklabels={
  \(\displaystyle {10^{-4}}\),
  \(\displaystyle {10^{-3}}\),
  \(\displaystyle {10^{-2}}\),
  \(\displaystyle {10^{-1}}\),
  \(\displaystyle {10^{0}}\),
  \(\displaystyle {10^{1}}\),
  \(\displaystyle {10^{2}}\),
  \(\displaystyle {10^{3}}\),
  \(\displaystyle {10^{4}}\),
  \(\displaystyle {10^{5}}\)
}
]
\addplot [very thick, brown1611946, dash pattern=on 4.5pt off 1.5pt on 1.5pt off 1.5pt]
table {%
11 1370.96044621936
12 1308.1949889379
13 1154.1569916677
14 796.682449898282
15 739.513944931629
16 472.583460105171
17 297.089262015888
18 135.901087274748
19 4.10500194243068
20 0.470917973707449
21 0.0381823070219779
22 0.0168737282822021
23 0.0177740826582051
24 0.0161361827549899
25 0.0148520820657621
26 0.0141058095508409
27 0.0138088149956097
28 0.0137694545633794
29 0.0137051650374013
30 0.0136912197471534
31 0.013693471155107
32 0.0136923104431268
33 0.0136894796375812
34 0.0136870826442811
35 0.0136854217100725
36 0.0136842966724952
37 0.0136835161788493
38 0.0136829616051662
39 0.0136825644017874
40 0.0136822816953111
41 0.0136820829378141
42 0.0136819421600329
43 0.0136818355019947
44 0.0136817410199081
45 0.0136816399462665
46 0.0136815182989556
47 0.013681368452824
48 0.0136811898562324
49 0.0136809889653828
50 0.0136807777016892
51 0.0136805714839298
52 0.0136803867014815
53 0.0136802383702461
54 0.0136801381403591
55 0.0136800932706552
56 0.0136801060304343
57 0.0136801740393601
58 0.0136802910475595
59 0.0136804478479767
60 0.0136806338830817
61 0.0136808378292286
62 0.0136810490021069
63 0.0136812577905326
64 0.0136814562195032
65 0.0136816381303855
66 0.0136817992129337
67 0.0136819369278213
68 0.0136820502122127
69 0.0136821392283022
70 0.0136822051328174
};
\addplot [very thick, brown1611946]
table {%
11 0.0150708437960195
12 0.0150708437960195
13 0.0150708437960195
14 0.0150708437960195
15 0.0150708437960195
16 0.0150708437960195
17 0.0150708437960195
18 0.0150708437960195
19 0.0150708437960195
20 0.0150708437960195
21 0.0150708437960195
22 0.0150708437960195
23 0.0150708437960195
24 0.0150708437960195
25 0.0150708437960195
26 0.0150708437960195
27 0.0150708437960195
28 0.0150708437960195
29 0.0150708437960195
30 0.0150708437960195
31 0.0150708437960195
32 0.0150708437960195
33 0.0150708437960195
34 0.0150708437960195
35 0.0150708437960195
36 0.0150708437960195
37 0.0150708437960195
38 0.0150708437960195
39 0.0150708437960195
40 0.0150708437960195
41 0.0150708437960195
42 0.0150708437960195
43 0.0150708437960195
44 0.0150708437960195
45 0.0150708437960195
46 0.0150708437960195
47 0.0150708437960195
48 0.0150708437960195
49 0.0150708437960195
50 0.0150708437960195
51 0.0150708437960195
52 0.0150708437960195
53 0.0150708437960195
54 0.0150708437960195
55 0.0150708437960195
56 0.0150708437960195
57 0.0150708437960195
58 0.0150708437960195
59 0.0150708437960195
60 0.0150708437960195
61 0.0150708437960195
62 0.0150708437960195
63 0.0150708437960195
64 0.0150708437960195
65 0.0150708437960195
66 0.0150708437960195
67 0.0150708437960195
68 0.0150708437960195
69 0.0150708437960195
70 0.0150708437960195
};
\draw[latex-, thick, darkcyan0113188] (axis cs:25,0.02) -- node[pos=0.5, anchor=west] {\scriptsize $\div 10$} (axis cs:25,6);
\draw[-latex, thick, brown1611946] (axis cs:11.5,0.02) -- node[pos=0.7, anchor=west] {\scriptsize $\times 10^5$} (axis cs:11.5,1000);
\end{axis}

\begin{axis}[
axis y line*=right,
clip mode=individual,
height=4cm,
label style={font=\scriptsize},
legend style={font=\scriptsize},
log basis y={10},
tick align=outside,
tick label style={font=\scriptsize},
width=7.5cm,
x grid style={darkgray176},
xmin=11, xmax=31,
xtick pos=left,
xtick style={color=black},
xmajorticks = false,
xticklabels={},
y grid style={darkgray176},
ylabel=\textcolor{darkcyan0113188}{Computation time [ms]},
ymin=0.137710660601218, ymax=10.2183913940636,
ymode=log,
ytick pos=right,
ytick style={color=black},
ytick={0.01,0.1,1,10,100,1000},
yticklabel style={anchor=west},
yticklabels={
  \(\displaystyle {10^{-2}}\),
  \(\displaystyle {10^{-1}}\),
  \(\displaystyle {10^{0}}\),
  \(\displaystyle {10^{1}}\),
  \(\displaystyle {10^{2}}\),
  \(\displaystyle {10^{3}}\)
}
]
\addplot [very thick, darkcyan0113188, dash pattern=on 4.5pt off 1.5pt on 1.5pt off 1.5pt]
table {%
11 0.875473022460938
12 0.8544921875
13 0.79035758972168
14 0.884294509887695
15 0.973939895629883
16 0.936031341552734
17 1.22570991516113
18 1.33943557739258
19 1.36804580688477
20 1.19972229003906
21 1.37519836425781
22 1.42049789428711
23 1.43098831176758
24 1.66654586791992
25 1.65843963623047
26 1.82962417602539
27 2.19202041625977
28 1.70326232910156
29 1.97219848632812
30 2.15530395507812
31 3.23295593261719
32 2.41684913635254
33 2.07972526550293
34 2.32410430908203
35 2.4569034576416
36 2.50840187072754
37 3.1282901763916
38 2.66265869140625
39 2.84290313720703
40 2.98786163330078
41 2.92325019836426
42 3.41439247131348
43 3.39174270629883
44 3.07035446166992
45 3.56149673461914
46 3.32736968994141
47 3.40437889099121
48 3.79323959350586
49 3.81731986999512
50 4.12869453430176
51 6.56771659851074
52 6.17551803588867
53 7.16447830200195
54 7.00664520263672
55 6.93178176879883
56 6.28995895385742
57 8.40163230895996
58 6.75010681152344
59 6.64305686950684
60 6.61754608154297
61 6.60157203674316
62 6.7901611328125
63 6.59370422363281
64 6.8361759185791
65 7.06887245178223
66 5.04946708679199
67 4.75358963012695
68 4.52947616577148
69 4.91642951965332
70 5.23853302001953
};
\addplot [very thick, darkcyan0113188]
table {%
11 0.167489051818848
12 0.167489051818848
13 0.167489051818848
14 0.167489051818848
15 0.167489051818848
16 0.167489051818848
17 0.167489051818848
18 0.167489051818848
19 0.167489051818848
20 0.167489051818848
21 0.167489051818848
22 0.167489051818848
23 0.167489051818848
24 0.167489051818848
25 0.167489051818848
26 0.167489051818848
27 0.167489051818848
28 0.167489051818848
29 0.167489051818848
30 0.167489051818848
31 0.167489051818848
32 0.167489051818848
33 0.167489051818848
34 0.167489051818848
35 0.167489051818848
36 0.167489051818848
37 0.167489051818848
38 0.167489051818848
39 0.167489051818848
40 0.167489051818848
41 0.167489051818848
42 0.167489051818848
43 0.167489051818848
44 0.167489051818848
45 0.167489051818848
46 0.167489051818848
47 0.167489051818848
48 0.167489051818848
49 0.167489051818848
50 0.167489051818848
51 0.167489051818848
52 0.167489051818848
53 0.167489051818848
54 0.167489051818848
55 0.167489051818848
56 0.167489051818848
57 0.167489051818848
58 0.167489051818848
59 0.167489051818848
60 0.167489051818848
61 0.167489051818848
62 0.167489051818848
63 0.167489051818848
64 0.167489051818848
65 0.167489051818848
66 0.167489051818848
67 0.167489051818848
68 0.167489051818848
69 0.167489051818848
70 0.167489051818848
};
\end{axis}

\end{tikzpicture}
\caption{Comparison of the relative suboptimality and the worst-case computation times of a nonlinear MPC with terminal cost and different horizon lengths (dashed lines), and our scheme with fixed horizon $N=11$ (solid lines).}\label{fig:nlp}
\end{figure}

\begin{figure}
\centering
\begin{tikzpicture}

\definecolor{brown1611946}{RGB}{161,19,46}
\definecolor{darkcyan0113188}{RGB}{0,113,188}
\definecolor{darkgray176}{RGB}{176,176,176}
\definecolor{lightgray204}{RGB}{204,204,204}

\begin{axis}[
clip mode=individual,
height=3.25cm,
label style={font=\scriptsize},
legend cell align={left},
legend style={
  fill opacity=0.8,
  draw opacity=1,
  text opacity=1,
  at={(0.03,0.97)},
  anchor=north west,
  draw=lightgray204,
  row sep = -2.5pt
},
legend style={font=\scriptsize},
log basis x={10},
tick align=outside,
tick label style={font=\scriptsize},
tick pos=left,
width=8cm,
x grid style={darkgray176},
xlabel={Noise magnitude},
xmajorgrids,
xmin=9.99e-07, xmax=0.0001,
xmode=log,
xtick style={color=black},
xtick={1e-08,1e-07,1e-06,1e-05,0.0001,0.001,0.01},
xticklabels={
  \(\displaystyle {10^{-8}}\),
  \(\displaystyle {10^{-7}}\),
  \(\displaystyle {10^{-6}}\),
  \(\displaystyle {10^{-5}}\),
  \(\displaystyle {10^{-4}}\),
  \(\displaystyle {10^{-3}}\),
  \(\displaystyle {10^{-2}}\)
},
y grid style={darkgray176},
ylabel={Closed-loop cost},
ymajorgrids,
ymin=0, ymax=350000,
ytick style={color=black},
tick scale binop = \times,
y tick scale label style={at={(axis description cs:0,1.2)}, anchor=west},
]
\addplot [very thick, darkcyan0113188]
table {%
1e-06 17147.1189831241
3.53846153846154e-06 17145.0907451918
6.07692307692308e-06 17143.1995797194
8.61538461538462e-06 17141.3162968271
1.11538461538462e-05 17139.190924268
1.36923076923077e-05 17137.1969947332
1.62307692307692e-05 17135.4871757794
1.87692307692308e-05 17133.6326316857
2.13076923076923e-05 17131.3662890244
2.38461538461538e-05 17129.266848638
2.63846153846154e-05 17127.4492434547
2.89230769230769e-05 17125.0724623527
3.14615384615385e-05 17123.8090115175
3.4e-05 17121.648248329
3.65384615384615e-05 17119.6723303782
3.90769230769231e-05 17117.5534939346
4.16153846153846e-05 17114.9303003818
4.41538461538462e-05 17113.1636380325
4.66923076923077e-05 17111.7199054227
4.92307692307692e-05 17109.3772374659
5.17692307692308e-05 17107.2717010205
5.43076923076923e-05 17105.3757682277
5.68461538461538e-05 17104.7781764564
5.93846153846154e-05 17101.9980603934
6.19230769230769e-05 17097.7646705754
6.44615384615385e-05 17096.7184883051
6.7e-05 17093.8662792277
6.95384615384615e-05 17093.7629220625
7.20769230769231e-05 17089.1696632388
7.46153846153846e-05 17089.9618121146
7.71538461538462e-05 17086.5179807036
7.96923076923077e-05 17087.5075004397
8.22307692307692e-05 17083.1904679973
8.47692307692308e-05 17081.0682951971
8.73076923076923e-05 17080.0380706313
8.98461538461538e-05 17077.3380130656
9.23846153846154e-05 17075.8391088813
9.49230769230769e-05 17074.513162592
9.74615384615385e-05 17072.9902238114
0.0001 17070.4864623971
};
\addlegendentry{BP-MPC}
\addplot [very thick, brown1611946, dash pattern=on 4.5pt off 1.5pt on 1.5pt off 1.5pt]
table {%
1e-06 124464.653642965
3.53846153846154e-06 138361.968222062
6.07692307692308e-06 150108.419946674
8.61538461538462e-06 158468.916152035
1.11538461538462e-05 162989.526097872
1.36923076923077e-05 166759.996416106
1.62307692307692e-05 168710.85776104
1.87692307692308e-05 170471.766320651
2.13076923076923e-05 171702.337543821
2.38461538461538e-05 171983.105585361
2.63846153846154e-05 172169.949428127
2.89230769230769e-05 172178.632199954
3.14615384615385e-05 172180.53401569
3.4e-05 172191.504588332
3.65384615384615e-05 172309.450817278
3.90769230769231e-05 172430.852518599
4.16153846153846e-05 172784.713858317
4.41538461538462e-05 173905.964756271
4.66923076923077e-05 174423.594856939
4.92307692307692e-05 175194.145274503
5.17692307692308e-05 176304.153325717
5.43076923076923e-05 178569.700146504
5.68461538461538e-05 178663.582365407
5.93846153846154e-05 182653.666938271
6.19230769230769e-05 185240.16316663
6.44615384615385e-05 188495.998572743
6.7e-05 189669.100471754
6.95384615384615e-05 191893.10449653
7.20769230769231e-05 192175.863644068
7.46153846153846e-05 194068.751277983
7.71538461538462e-05 194512.53254229
7.96923076923077e-05 192271.645522379
8.22307692307692e-05 196008.915290045
8.47692307692308e-05 195633.535496869
8.73076923076923e-05 198221.082749375
8.98461538461538e-05 939878.171979803
9.23846153846154e-05 1215978.26430356
9.49230769230769e-05 1254605.6841986
9.74615384615385e-05 1768118.93596987
0.0001 1938268.65881523
};
\addlegendentry{Trajectory Optimization}
\end{axis}

\end{tikzpicture}
\caption{Closed-loop median performance (over $1000$ random noise samples) of nonlinear trajectory optimization (feedforward) and BP-MPC (receding horizon) with different noise magnitudes.}\label{fig:nlp_noise}
\end{figure}
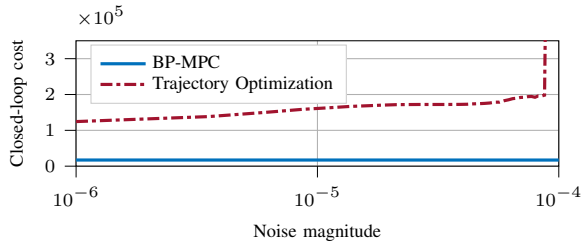

%
\begin{figure}
\centering
\input{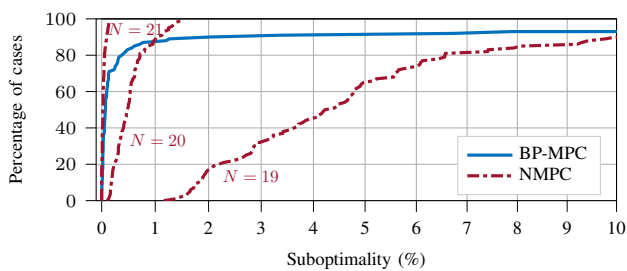}
\caption{Relative suboptimality of the tuned MPC scheme (with horizon $11$) and nonlinear MPC schemes with variable horizon for a set of $1000$ initial conditions.}\label{fig:robustness}
\end{figure}

\subsection{Input and state-constrained example}\label{subsec:SIM_loss_of_feasibility}
\rz{In this section we consider the same dynamics \cref{eq:SIM_ode}, discretized using RK4 with a sampling time of $0.05$, but this time with initial condition $x(0)=(-3,0,0,0)$ (i.e., pendulum up, cart at $-3$ meters from the origin). In this case, the goal is to steer the system to the origin while mainting the pendulum close to the upright position as not to exceed the constraints $\dot{x}(t)\in[-0.6,0.6]$, $\phi(t)\in[-0.1,0.1]$ and $\dot{\phi}(t)\in[-0.6,0.6]$. The input constraints are $u(t)\in[-0.9,0.9]$. We choose the MPC horizon as $N=6$, with $T=120$, and closed-loop cost $Q_x=\operatorname*{diag}(1,0.01,1,0.1)$, $R_u=0.01$. Moreover, we use the same choice of $p$ as in \cref{subsection:SIM_cart_pend} with the same initialization.}

\rz{This new control task is very challenging from a safety perspective, as the controller needs to reduce the aggressiveness of the control action to avoid violating the tight constraints. This is similar to the problem of controlling the position of a segway without falling.}
%
We therefore utilize the soft-constrained MPC described in \cref{eq:EXT_mpc_feasible} with penalty parameters $c_1=15$ and $c_2=15$, and apply the optimization scheme with the objective function in \cref{eq:EXT_opt_feasible_2}, with $P_\epsilon(\epsilon)=60\mathbf{1}^\top \epsilon + 40 \epsilon^\top \epsilon$ (where $\epsilon:=(\epsilon_1,\epsilon_2,\dots,\epsilon_T)$ contains the slack variables of all the optimization problems, each of which spans $N$ time-steps, and $\mathbf{1}$ is the vector of all ones). The results can be seen in \cref{fig:slack_state}, where the constraints are satisfied and $\epsilon=0$. The effect of a penalty on the constraint violation induces the optimization algorithm to favor values of $p$ that maintain small constraint violations. This does not happen if $c_3=c_4=0$, as evidenced by \cref{fig:slack_state} (\rz{note that in this case we still penalize the slacks within each MPC problem, i.e., $c_1=c_2=15$}). The closed-loop cost and constraint violations of the BP-MPC and the MPC with fixed terminal cost (equal to the solution of the DARE) are summarized in \cref{table:cost_and_violation}.
\begin{table}[h]
\centering
\begin{tabular}{c c c }
\toprule
& \textbf{Closed-loop cost} & \textbf{Constraint violation} \\
\midrule
DARE & $377.391$ & $0.713$ \\
BP-MPC (penalty) & $390.824$ & $0$ \\
BP-MPC (no penalty) & $180.979$ & $43.829$ \\
Best achievable & $380.244$ & $0$ \\
\bottomrule
\end{tabular}
\caption{Comparison of closed-loop cost and constraint violation.}\label{table:cost_and_violation}
\end{table}
\begin{figure}
\centering
\input{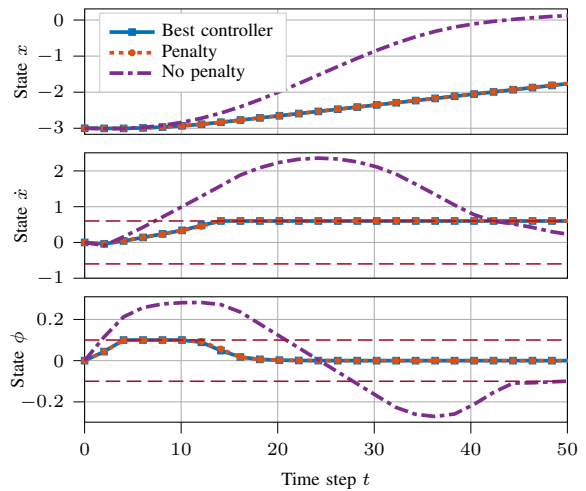}
\caption{Comparison of closed-loop state trajectories {\sffamily\normalfont $x(t)$, $\dot{x}(t)$, and $\dot{\phi}(t)$} under different control policies.}\label{fig:slack_state}
\end{figure}

\subsection{Comparison with \mytcite{gros2019data}}\label{subsection:SIM_gros}

\rz{For completeness, we compare our method with \cite{gros2019data} on the example presented in \cref{subsection:SIM_cart_pend}. When using the method in \cite{gros2019data}, we use the same cost parameterization we used with our method, but employ a fully nonlinear MPC. Using open source code\footnote{\url{https://github.com/FilippoAiraldi/mpc-reinforcement-learning}}, we test both a quasi-Newton and a gradient-based update. However, despite some manual tuning of the stepsizes, both methods fail to achieve a performance comparable to the one of our method (the best observed cost is 37404.04, compared to 17147.9 of our method).}

\rz{The situation changes if we consider the example in \cref{subsec:SIM_loss_of_feasibility}. To make the comparison fair, we use a fixed linear prediction model for both methods (obtained by linearizing the dynamics at the origin). In this case, the method of \cite{gros2019data} converges rapidly to the optimal performance (same as our method) in a smaller number of iterations than our method. This is due to the fundamental difference in the two update schemes: \cite{gros2019data} is using a quasi-Newton scheme and it is performing one update every time-step within each iteration, requiring $1$ iteration to converge (hence $120$ parameter updates are necessary to reach optimal performance); our scheme, on the other hand, uses a gradient-based scheme and only updates once per iteration, requiring $3$ iterations in total.}

\rz{This comparison empirically suggests that, in scenarios where the dynamics are highly nonlinear (e.g., the example of \cref{subsection:SIM_cart_pend}), our method can outperform the method of \cite{gros2019data}, achieving optimal performance despite using a simpler MPC architecture. However, in scenarios where the dynamics are linear or mildly nonlinear (e.g., the example of \cref{subsec:SIM_loss_of_feasibility}), the method of \cite{gros2019data} can achieve similar performance with fewer iterations, thanks to the quasi-Newton update scheme and the possibility of updating the parameters at each time-step of every iteration. It is important to note that, additionally, our method has convergence guarantees.}
\section{Conclusion}
We proposed a backpropagation algorithm to optimally design an MPC scheme to maximize closed-loop performance. The cost and the constraints in the MPC can depend on the current state of the system, as well as on past solutions of previous MPC problems. We employed conservative Jacobians to compute the sensitivity of the closed-loop trajectory with respect to variations of the design parameter. Leveraging a non-smooth version of the implicit function theorem, we derived sufficient conditions under which the gradient-based optimization procedure converges to a critical point of the problem. Further, we extended our framework to cases where the MPC problem becomes infeasible, using nonsmooth penalty functions and derived conditions under which the closed-loop is guaranteed to converge to a safe solution.

Current work focuses on deploying our optimization scheme on more realistic real-life examples and extending it to scenarios where the system dynamics are only partially known and / or affected by stochastic noise.

\rz{Future work will also investigate the use of more advanced optimization techniques, such as second-order or accelerated methods, to improve the convergence rate of the optimization procedure and to obtain better convergence guarantees (e.g. to a local or global minimizer).}
\appendix
\subsection{Obtaining \texorpdfstring{\cref{eq:PF_closed_loop_compact_problem}}{(\ref*{eq:PF_closed_loop_compact_problem})} from \texorpdfstring{\cref{eq:PF_problem_2}}{(\ref*{eq:PF_problem_2})}}\label{appendix:appA}
Let $\bar{f}:\Rx \times \Rp \to \Rx$ be defined as $\bar{f}(\bar{x}_t,p)=f(\bar{x}_t,\mathrm{MPC}(\bar{x}_t,p))$ and $\bar{f}^n:= \bar{f}\circ \bar{f}^{n-1}$, with $\bar{f}^1:=\bar{f}$. Then the entire closed-loop state trajectory $\{\bar{x}_t\}_{t=0}^T$ can be written for all $t\in\Z_{[1,T]}$ as $\bar{x}_t=\bar{f}^t(\bar{x}_0,p)$. We define, with a slight abuse of notation, $\bar{x}: \R^{n_p} \to \R^{(T+1)n_x}$ as $\bar{x}(p):=(\bar{x}_0,\bar{f}(\bar{x}_0,p),\bar{f}^2(\bar{x}_0,p),\dots,\bar{f}^T(\bar{x}_0,p))$, where we omitted the dependency of $\bar{x}$ on $\bar{x}_0$ for simplicity. We can write the constraints in \cref{eq:PF_problem_2} compactly as $\mathcal{H}(p)\leq 0$, where $\mathcal{H}:\R^p \to \R^{(T+1)n_x}$ is defined as $\mathcal{H}(p):=\bar{H}_x \bar{x}(p) -\bar{h}_x$, with $\bar{H}_x:=\operatorname{diag}(H_x,\dots,H_x)$ and $\bar{h}_x:=(h_x,\dots,h_x)$.

\subsection{Deriving the Dual of the QP \texorpdfstring{\cref{eq:DIFF_qp}}{(\ref*{eq:DIFF_qp})}}\label{appendix:appC}

In this section, we derive the dual problem \dual of the quadratic program \qp. To do so, consider the Lagrangian function $\L$ of \qp, given by
\begin{align*}
\L(y,\lambda,\mu) = \frac{1}{2}y^\top \QQ y +\qq^\top y + \mu^\top (\FF y-\ff) + \lambda^\top (\GG y-\gg),
\end{align*}
where $\lambda \geq 0$ and $\mu$ are the Lagrange multipliers of \qp. The Lagrange dual function $d$ can be obtained by minimizing $\L$ with respect to its first argument:
\begin{align*}
d(\lambda,\mu) = \inf_{y} \frac{1}{2}y^\top \QQ y + \qq^\top y + \mu^\top (\FF y-\ff) + \lambda^\top (\GG y-\gg).
\end{align*}
Since $\L$ is a convex function of $x$, we can find its infimum by setting $\nabla_y \L(y,\lambda,\mu)$ to zero and solving for $y$:
\begin{align}
\label{eq:APP_dual_kkt1}
\nabla_y \L(y,\lambda,\mu) = \QQ y+\qq+\FF^\top \mu + \GG^\top \lambda = 0.
\end{align}
Note that since $\QQ$ is positive definite, it is invertible and its inverse is also positive definite. Therefore, the only value of $y$ solving \cref{eq:APP_dual_kkt1} is given by
\begin{align}
\label{eq:APP_dual_primal_opt}
y=-\QQ^{-1}(\FF^\top \mu + \GG^\top \lambda + \qq).
\end{align}
Substituting this value of $y$ in $\L$ we get
\begin{align*}
d(\lambda,\mu)=d(z)=-\frac{1}{2}z^\top \HH z - \hh^\top z - c,
\end{align*}
where $z=(\lambda,\mu)$, $c:= -1/2 \qq^\top \QQ^{-1} \qq$ is a constant independent of $z$, and
\begin{align}\label{eq:APP_dual_matrices}
\HH:=\begin{bmatrix}
\GG\QQ^{-1}\GG^\top & \GG\QQ^{-1}\FF^\top \\ \FF\QQ^{-1}\GG^\top & \FF\QQ^{-1}\FF^\top
\end{bmatrix} ,~
\hh := \begin{bmatrix} \GG\QQ^{-1}\qq+\gg \\ \FF\QQ^{-1}\qq+\ff \end{bmatrix}.
\end{align}
The Lagrange dual problem aims at maximizing $d$ over $z$ with the constraint $\lambda \geq 0$. Equivalently, we can minimize $-d$ subject to the same constraint:
\begin{align*}
\operatorname*{minimize}_{z} & \quad \frac{1}{2}z^\top \HH z + \hh^\top z + c\\
\text{subject to}& \quad Ez \geq 0,
\end{align*}
where \rz{$E=[I~0]$} is the matrix satisfying $Ez=\lambda$.

\begin{figure*}[!t]
\normalsize
\setcounter{MYtempeqncnt}{\value{equation}}
\setcounter{equation}{43}
\begin{align}\label{eq:APP_big_mat_1}
\begin{bmatrix}
0 &&&\\
&J_\text{wa}&&\\
&&I&\\
&&&I
\end{bmatrix}\left[
I
-\gamma\begin{bmatrix}
\GG_{\text{i}} \QQ^{-1} \GG_\text{i}^\top & \GG_\text{i} \QQ^{-1} \GG_\text{wa}^\top & \GG_\text{i} \QQ^{-1} \GG_\text{a} & \GG_\text{i} \QQ^{-1} \FF^\top \\
\GG_{\text{wa}} \QQ^{-1} \GG_\text{i}^\top & \GG_\text{wa} \QQ^{-1} \GG_\text{wa}^\top & \GG_\text{wa} \QQ^{-1} \GG_\text{a} & \GG_\text{wa} \QQ^{-1} \FF^\top \\
\GG_{\text{a}} \QQ^{-1} \GG_\text{i}^\top & \GG_\text{a} \QQ^{-1} \GG_\text{wa}^\top & \GG_\text{a} \QQ^{-1} \GG_\text{a} & \GG_\text{a} \QQ^{-1} \FF^\top \\
\FF \QQ^{-1} \GG_\text{i}^\top & \FF \QQ^{-1} \GG_\text{wa}^\top & \FF \QQ^{-1} \GG_\text{a} & \FF \QQ^{-1} \FF^\top
\end{bmatrix}\right]
\begin{bmatrix}
v_\text{i}\\v_\text{wa}\\v_\text{a}\\v_\text{eq}
\end{bmatrix}=
\begin{bmatrix}
v_\text{i}\\v_\text{wa}\\v_\text{a}\\v_\text{eq}
\end{bmatrix}
\end{align}
\begin{align}\label{eq:APP_big_mat_2}
\begin{bmatrix}
J_\text{wa}&&\\
&I&\\
&&I
\end{bmatrix}\left[
I
-\gamma\begin{bmatrix}
\GG_\text{wa} \QQ^{-1} \GG_\text{wa}^\top & \GG_\text{wa} \QQ^{-1} \GG_\text{a} & \GG_\text{wa} \QQ^{-1} \FF^\top \\
\GG_\text{a} \QQ^{-1} \GG_\text{wa}^\top & \GG_\text{a} \QQ^{-1} \GG_\text{a} & \GG_\text{a} \QQ^{-1} \FF^\top \\
\FF \QQ^{-1} \GG_\text{wa}^\top & \FF \QQ^{-1} \GG_\text{a} & \FF \QQ^{-1} \FF^\top
\end{bmatrix}\right]
\begin{bmatrix}
v_\text{wa}\\v_\text{a}\\v_\text{eq}
\end{bmatrix}=
\begin{bmatrix}
v_\text{wa}\\v_\text{a}\\v_\text{eq}
\end{bmatrix}
\end{align}
\begin{align}\label{eq:APP_big_mat_3}
\begin{bmatrix}
J_\text{wa}-I&&\\
&0&\\
&&0
\end{bmatrix}
\begin{bmatrix}
v_\text{wa}\\v_\text{a}\\v_\text{eq}
\end{bmatrix}-
\begin{bmatrix}
J_\text{wa}&&\\
&I&\\
&&I
\end{bmatrix}
\begin{bmatrix}
\tilde{\GG}_\text{wa} \tilde{\GG}_\text{wa}^\top & \tilde{\GG}_\text{wa} \tilde{\GG}_\text{a} & \tilde{\GG}_\text{wa} \tilde{\FF}^\top \\
\tilde{\GG}_\text{a} \tilde{\GG}_\text{wa}^\top & \tilde{\GG}_\text{a} \tilde{\GG}_\text{a} & \tilde{\GG}_\text{a} \tilde{\FF}^\top \\
\tilde{\FF} \tilde{\GG}_\text{wa}^\top & \tilde{\FF} \tilde{\GG}_\text{a} & \tilde{\FF} \tilde{\FF}^\top
\end{bmatrix}
\begin{bmatrix}
v_\text{wa}\\v_\text{a}\\v_\text{eq}
\end{bmatrix}=\begin{bmatrix}
0\\0\\0
\end{bmatrix}
\end{align}
\begin{align}\label{eq:APP_big_mat_4}
\underbrace{\begin{bmatrix}
I-J_\text{wa}^{-1}&&\\
&0&\\
&&0
\end{bmatrix}}_{:=\mathcal{A}_1}
\begin{bmatrix}
v_\text{wa}\\v_\text{a}\\v_\text{eq}
\end{bmatrix}-
\underbrace{\begin{bmatrix}
\tilde{\GG}_\text{wa} \tilde{\GG}_\text{wa}^\top & \tilde{\GG}_\text{wa} \tilde{\GG}_\text{a} & \tilde{\GG}_\text{wa} \tilde{\FF}^\top \\
\tilde{\GG}_\text{a} \tilde{\GG}_\text{wa}^\top & \tilde{\GG}_\text{a} \tilde{\GG}_\text{a} & \tilde{\GG}_\text{a} \tilde{\FF}^\top \\
\tilde{\FF} \tilde{\GG}_\text{wa}^\top & \tilde{\FF} \tilde{\GG}_\text{a} & \tilde{\FF} \tilde{\FF}^\top
\end{bmatrix}}_{:=\mathcal{A}_2}
\begin{bmatrix}
v_\text{wa}\\v_\text{a}\\v_\text{eq}
\end{bmatrix}=\begin{bmatrix}
0\\0\\0
\end{bmatrix}
\end{align}
\hrulefill
\end{figure*}
\setcounter{equation}{\value{MYtempeqncnt}}

\subsection{Proof of \texorpdfstring{\cref{thm:main_thm}}{Theorem \ref*{thm:main_thm}}} \label{appendix:appD}

By \cite[Theorem 1]{wachsmuth2013licq}, if LICQ holds for a convex optimization problem like \qp, then the set of multipliers is a singleton. Hence $z(p)$ is unique since it is a multiplier for \qp.

We first show that the expressions in \cref{eq:DIFF_cons_jac_fixed_point} represent the elements of the conservative Jacobian $\J_{\mathcal{F}}(z,\p)$. Note that
\begin{align}\label{eq:APP_cons_jac_1}
\J_{\mathcal{F},z}(z,\p) &= J_{P_C} \J_{[z-\gamma(\HH(\p)z+\hh(\p))],z}(z,\p) - I \\
&=J_{P_C} (I- \gamma \HH(\p) ) - I,\label{eq:APP_cons_jac_2}
\end{align}
where $C=\{ z :Ez \geq 0 \} = \R_{\geq 0}^{n_\text{in}} \times \R^{n_\text{eq}}$, and $J_{P_C}\in \J_{P_C}(\bar{z})$ with
\begin{align}\label{eq:APP_lam_bar}
\bar{z}=(\bar{\lambda},\bar{\mu}):=z-\gamma(\HH(\p)z+\hh(\p)),
\end{align}
and we have used the fact that $z-\gamma(\HH(\p)z-\hh(\p))$ is differentiable in $z$ with Jacobian equal to $I-\gamma \HH(\p)$. Next, we have
\begin{align*}
\J_{\mathcal{F},\p}(z,\p) &= J_{P_C} \J_{[z-\gamma(\HH(\p)z+\hh(\p))],\p}(z,\p) \\
&= - \gamma J_{P_C} (Az + B),
\end{align*}
where $A\in\J_{\HH}(\p)$, $B\in\J_{\hh}(\p)$, and we have used the fact that $\HH$ and $\hh$ are path-differentiable as a by-product of \cref{lemma:DIFF_F_is_path_diff}. This proves that \cref{eq:DIFF_cons_jac_fixed_point_1} and \cref{eq:DIFF_cons_jac_fixed_point_2} are the elements of the conservative Jacobian $\J_{\mathcal{F}}(z,\p)$. 

The conservative Jacobian of $P_C$ can be easily computed by recognizing from \cref{eq:DIFF_MPC_P_C} that $P_C$ is composed by a number of scalar projectors to the non-negative orthant, and a number of scalar identity functions. The identity functions have unitary Jacobian, whereas the projectors, whose explicit representation is given in \cref{eq:DIFF_MPC_P_nonneg_orth}, are convex and path differentiable (since they are Lipschitz and convex). Therefore, from \cite[Corollary 2]{bolte2021conservative}, we conclude that their subdifferential (in the sense of convex analysis) is a conservative field. This means that we can choose for each $i\in\Z_{[1,n_\text{in}]}$
\begin{align*}
\mathcal{J}_{P_{\R_{\geq 0}}}(\lambda_i)=\begin{cases}
0 & \text{if } \lambda_i < 0,\\
1 & \text{if } \lambda_i > 0,\\
[0,1] & \text{if } \lambda_i =0.
\end{cases}
\end{align*}
We can then retrieve the conservative Jacobian of $P_C$ by combining the conservative Jacobians of all the projectors
\begin{align*}
\mathcal{J}_{P_C}(\bar{z})= \operatorname{diag}(\mathcal{J}_{P_{\R_{\geq 0}}}(\bar{\lambda}_1),\dots,\mathcal{J}_{P_{\R_{\geq 0}}}(\bar{\lambda}_{n_\text{in}}),1,\dots,1).
\end{align*}
Assuming, for the time being, that every element of $\mathcal{J}_{\mathcal{F},z}(z(\p),\p)$ is invertible, where $z(\p)$ is the solution of \dual, we immediately have that $z$ is path-differentiable with conservative Jacobian $\J_z(\p)$, and $-U^{-1}V\in\J_z(\p)$ for every $[U~V]\in \J_{\mathcal{F}}(z,\p)$ thanks to \cref{lemma:CJ_IFT}.
We now only need to prove that for every $[U~V]\in \J_{\mathcal{F}}(z,\p)$, $U$ is invertible.

Let $\mathcal{I} \subset \Z_{[1,n_\text{in}]}$ denote the active inequality constraints. Remembering that $\bar{\lambda}_i$ is defined in \cref{eq:APP_lam_bar}, and that $\lambda_i=P_{\R_{\geq 0}}[\bar{\lambda}_i]$, we now prove the following:
\begin{enumerate}
    \item $\bar{\lambda}_i>0$ if $i\in\mathcal{I}$ with $\lambda_i>0$ (active constraint);
    \item $\bar{\lambda}_i<0$ if $i\notin\mathcal{I}$ with $\lambda_i=0$ (inactive constraint);
    \item $\bar{\lambda}_i=0$ if $i\in\mathcal{I}$ with $\lambda_i=0$ (weakly active constraint).
\end{enumerate}
1) is obvious, since if $\bar{\lambda}_i>0$ then $\lambda_i=\max\{ 0, \bar{\lambda}_i\}=\bar{\lambda}_i$. To prove 2), suppose $\mathbf{r}(\GG y,i)<\gg_i$ with $i \notin \mathcal{I}$. Then since $y=-\QQ^{-1}(\FF^\top \mu + \GG^\top \lambda + \qq)$, we have
\begin{align}\label{eq:APP_proof_2}
\mathbf{r}(-\GG \QQ^{-1}\FF^\top \mu - \GG\QQ^{-1} \GG^\top \lambda - \GG\QQ^{-1}\qq,i)<\gg_i.
\end{align}
From the definition of $\bar{\lambda}_i$ in \cref{eq:APP_lam_bar} we have
\begin{align*}
\bar{\lambda}_i&\stackrel{\hphantom{\cref{eq:APP_proof_2}}}{=} \lambda_i - \gamma \mathbf{r}(\HH_{11}\lambda+\HH_{12}\mu+\hh_1,i)\\
&\stackrel{\hphantom{\cref{eq:APP_proof_2}}}{=}\lambda_i+\gamma \mathbf{r}(-\GG \QQ^{-1}\GG^\top \lambda - \GG \QQ^{-1}\FF^\top \mu - \GG\QQ^{-1}\qq,i)-\gamma \gg_i\\
&\stackrel{\cref{eq:APP_proof_2}}{<} \lambda_i + \gamma \gg_i - \gamma \gg_i = \lambda_i = 0,
\end{align*}
since $i\notin\mathcal{I}$. This proves 2).

To prove 3), consider the case where $i\in \mathcal{I}$ and $\lambda_i=0$. Then \cref{eq:APP_proof_2} becomes
\begin{align}\label{eq:APP_proof_3}
\mathbf{r}(-\GG\QQ^{-1}\FF^\top \mu - \GG\QQ^{-1} \GG^\top \lambda - \GG\QQ^{-1}\qq,i)=\gg_i,
\end{align}
and similarly to before we have
\begin{align*}
\bar{\lambda}_i&\stackrel{\hphantom{\cref{eq:APP_proof_2}}}{=}\lambda_i+\gamma \mathbf{r}(-\GG\QQ^{-1}\GG^\top \lambda - \GG\QQ^{-1}\FF^\top \mu - \GG\QQ^{-1}\qq,i)-\gamma \gg_i\\
&\stackrel{\cref{eq:APP_proof_3}}{=} \lambda_i + \gamma \gg_i - \gamma \gg_i = \lambda_i = 0.
\end{align*}
We let $n_\text{i}$, $n_\text{wa}$, $n_\text{a}$ denote the number of inactive, weakly active, and strongly active inequality constraints, respectively. Without loss of generality, we can assume that $J_{P_C}=\operatorname{diag}(J_{\text{i}},J_\text{wa},J_\text{a},I)$, where $J_\text{i}=\operatorname{diag}(0,\dots,0)$ is associated to inactive constraints, $J_\text{wa}=\operatorname{diag}(\beta_1,\dots,\beta_{n_\text{wa}})$ is associated to weakly active constraints (with $\beta_i\in[0,1]$), and $J_\text{a}=I$ is associated to (strongly) active constraints.
Similarly, let $\GG=(\GG_\text{i},\GG_\text{wa},\GG_\text{a})$ and $v=(v_\text{i},v_\text{wa},v_\text{a},v_\text{eq})$, where $v_\text{eq}$ is associated to the equality constraints.

Assume now, for the sake of contradiction, that there exists $v\in\R^{n_\text{in}+n_\text{eq}}$, $v \neq 0$, satisfying $Uv = 0$.
This condition can be written using \cref{eq:APP_cons_jac_2} as
\begin{align}\label{eq:APP_nullspace_original}
J_{P_C}[I - \gamma \HH(\p)]v - v = 0,
\end{align}
or equivalently as in \cref{eq:APP_big_mat_1}.
Since the top left part of the $J_{P_C}$ matrix is full of zeros, the condition for $v_\text{i}$ is $v_\text{i}=0$, and we can therefore rewrite \cref{eq:APP_big_mat_1} as in \cref{eq:APP_big_mat_2}.
Next, let $\tilde{\GG}_\text{wa}:=\sqrt{\gamma}\GG_\text{wa}L$, $\tilde{\GG}_\text{a}:=\sqrt{\gamma}\GG_\text{a}L$ and $\tilde{\FF}:=\sqrt{\gamma}\FF_\text{wa}L$, where $\QQ^{-1}=LL^\top$ is the Cholesky decomposition of $\QQ^{-1}$.
With this notation \cref{eq:APP_big_mat_2} can be rewritten as in \cref{eq:APP_big_mat_3}.
We can assume that $J_\text{wa}$ is invertible, as if any $\beta_i=0$, we get the condition $\mathbf{r}(v_{\text{wa}},i)=0$ and we can treat the constraint the same way we did with the inactive constraints.
By multiplying both matrices in \cref{eq:APP_big_mat_3} by $\operatorname{diag}(J_\text{wa}^{-1},I,I)$ we obtain \cref{eq:APP_big_mat_4}.

Note that $\mathcal{A}_1$ is negative semi-definite since $J_\text{wa}^{-1}$ is a diagonal matrix with entries $1/\beta_i\in [1,+\infty)$, where $+\infty$ is not included, and $-\mathcal{A}_2$ is also negative definite since $\mathcal{A}_2=V^\top V$ where $V^\top=(\tilde{\GG}_\text{wa},\tilde{\GG}_\text{a},\tilde{\FF})$ is full row rank because of \cref{ass:DIFF_strong_convexity} and $L$ is invertible. Hence $\mathcal{A}_1-\mathcal{A}_2$ is negative definite and invertible, and the only way to verify \cref{eq:APP_nullspace_original} is to have $v=0$. We conclude that every $U$ is invertible by contradiction.
\ifBio
\addtolength{\textheight}{-3.5cm}
\else
\addtolength{\textheight}{-12cm}
\fi
\bibliographystyle{IEEEtran}
\bibliography{Sources/ref.bib}

\begin{thebibliography}{10}
\providecommand{\url}[1]{#1}
\csname url@rmstyle\endcsname
\providecommand{\newblock}{\relax}
\providecommand{\bibinfo}[2]{#2}
\providecommand\BIBentrySTDinterwordspacing{\spaceskip=0pt\relax}
\providecommand\BIBentryALTinterwordstretchfactor{4}
\providecommand\BIBentryALTinterwordspacing{\spaceskip=\fontdimen2\font plus
\BIBentryALTinterwordstretchfactor\fontdimen3\font minus \fontdimen4\font\relax}
\providecommand\BIBforeignlanguage[2]{{%
\expandafter\ifx\csname l@#1\endcsname\relax
\typeout{** WARNING: IEEEtran.bst: No hyphenation pattern has been}%
\typeout{** loaded for the language `#1'. Using the pattern for}%
\typeout{** the default language instead.}%
\else
\language=\csname l@#1\endcsname
\fi
#2}}

\bibitem{chen1998quasi}
H.~Chen and F.~Allg{\"o}wer, ``{A quasi-infinite horizon nonlinear model predictive control scheme with guaranteed stability},'' \emph{Automatica}, vol.~34, no.~10, pp. 1205--1217, 1998.

\bibitem{di2009model}
S.~Di~Cairano and A.~Bemporad, ``{Model predictive control tuning by controller matching},'' \emph{IEEE Transactions on Automatic Control}, vol.~55, no.~1, pp. 185--190, 2009.

\bibitem{soloperto2020augmenting}
R.~Soloperto, J.~K{\"o}hler, and F.~Allg{\"o}wer, ``{Augmenting MPC schemes with active learning: Intuitive tuning and guaranteed performance},'' \emph{IEEE Control Systems Letters}, vol.~4, no.~3, pp. 713--718, 2020.

\bibitem{silver2014deterministic}
D.~Silver, G.~Lever, N.~Heess, T.~Degris, D.~Wierstra, and M.~Riedmiller, ``{Deterministic policy gradient algorithms},'' in \emph{International conference on machine learning}.\hskip 1em plus 0.5em minus 0.4em\relax Pmlr, 2014, pp. 387--395.

\bibitem{amos2017optnet}
B.~Amos and J.~Z. Kolter, ``{Optnet: Differentiable optimization as a layer in neural networks},'' in \emph{International Conference on Machine Learning}.\hskip 1em plus 0.5em minus 0.4em\relax PMLR, 2017, pp. 136--145.

\bibitem{dontchev2009implicit}
A.~L. Dontchev, R.~T. Rockafellar, and R.~T. Rockafellar, \emph{{Implicit functions and solution mappings: A view from variational analysis}}.\hskip 1em plus 0.5em minus 0.4em\relax Springer, 2009, vol. 616.

\bibitem{amos2018differentiable}
B.~Amos, I.~Jimenez, J.~Sacks, B.~Boots, and J.~Z. Kolter, ``{Differentiable MPC for end-to-end planning and control},'' \emph{Advances in neural information processing systems}, vol.~31, 2018.

\bibitem{oshin2023differentiable}
A.~Oshin and E.~A. Theodorou, ``{Differentiable Robust Model Predictive Control},'' \emph{arXiv preprint arXiv:2308.08426}, 2023.

\bibitem{cortez2023robust}
W.~S. Cortez, J.~Drgona, D.~Vrabie, and M.~Halappanavar, ``{Robust Differentiable Predictive Control with Safety Guarantees: A Predictive Safety Filter Approach},'' \emph{arXiv preprint arXiv:2311.08496}, 2023.

\bibitem{gros2019data}
S.~Gros and M.~Zanon, ``{Data-driven economic NMPC using reinforcement learning},'' \emph{IEEE Transactions on Automatic Control}, vol.~65, no.~2, pp. 636--648, 2019.

\bibitem{zanon2020safe}
M.~Zanon and S.~Gros, ``{Safe reinforcement learning using robust MPC},'' \emph{IEEE Transactions on Automatic Control}, vol.~66, no.~8, pp. 3638--3652, 2020.

\bibitem{zanon2020reinforcement}
M.~Zanon, V.~Kungurtsev, and S.~Gros, ``{Reinforcement learning based on real-time iteration NMPC},'' \emph{IFAC-PapersOnLine}, vol.~53, no.~2, pp. 5213--5218, 2020.

\bibitem{bednarczuk2021lipschitz}
E.~M. Bednarczuk and K.~E. Rutkowski, ``{On Lipschitz continuity of projections onto polyhedral moving sets},'' \emph{Applied Mathematics \& Optimization}, vol.~84, no.~2, pp. 2147--2175, 2021.

\bibitem{bolte2021conservative}
J.~Bolte and E.~Pauwels, ``{Conservative set valued fields, automatic differentiation, stochastic gradient methods and deep learning},'' \emph{Mathematical Programming}, vol. 188, pp. 19--51, 2021.

\bibitem{davis2020stochastic}
D.~Davis, D.~Drusvyatskiy, S.~Kakade, and J.~D. Lee, ``{Stochastic subgradient method converges on tame functions},'' \emph{Foundations of computational mathematics}, vol.~20, no.~1, pp. 119--154, 2020.

\bibitem{bolte2021nonsmooth}
J.~Bolte, T.~Le, E.~Pauwels, and T.~Silveti-Falls, ``{Nonsmooth implicit differentiation for machine-learning and optimization},'' \emph{Advances in neural information processing systems}, vol.~34, pp. 13\,537--13\,549, 2021.

\bibitem{agrawal2020learning}
A.~Agrawal, S.~Barratt, S.~Boyd, and B.~Stellato, ``{Learning convex optimization control policies},'' in \emph{Learning for Dynamics and Control}.\hskip 1em plus 0.5em minus 0.4em\relax PMLR, 2020, pp. 361--373.

\bibitem{agrawal2019differentiable}
A.~Agrawal, B.~Amos, S.~Barratt, S.~Boyd, S.~Diamond, and J.~Z. Kolter, ``{Differentiable convex optimization layers},'' \emph{Advances in neural information processing systems}, vol.~32, 2019.

\bibitem{diehl2005real}
M.~Diehl, H.~G. Bock, and J.~P. Schl{\"o}der, ``A real-time iteration scheme for nonlinear optimization in optimal feedback control,'' \emph{SIAM Journal on control and optimization}, vol.~43, no.~5, pp. 1714--1736, 2005.

\bibitem{bolte2022differentiating}
J.~Bolte, E.~Pauwels, and A.~J. Silveti-Falls, ``{Differentiating nonsmooth solutions to parametric monotone inclusion problems},'' \emph{arXiv preprint arXiv:2212.07844}, 2022.

\bibitem{zuliani2024closed}
R.~Zuliani, E.~C. Balta, and J.~Lygeros, ``{Closed-loop Performance Optimization of Model Predictive Control with Robustness Guarantees},'' \emph{arXiv preprint arXiv:2403.04655}, 2024.

\bibitem{coste1999introduction}
M.~Coste, \emph{Introduction to o-minimal geometry}.\hskip 1em plus 0.5em minus 0.4em\relax Rennes, France: Institut de recherche math{\'e}matique de Rennes (IRMAR), 1999.

\bibitem{bolte2021nonsmooth_extended}
\BIBentryALTinterwordspacing
J.~Bolte, T.~Le, E.~Pauwels, and A.~Silveti{-}Falls, ``Nonsmooth implicit differentiation for machine learning and optimization,'' \emph{CoRR}, vol. abs/2106.04350, 2021. [Online]. Available: \url{https://arxiv.org/abs/2106.04350}
\BIBentrySTDinterwordspacing

\bibitem{wang2009fast}
Y.~Wang and S.~Boyd, ``Fast model predictive control using online optimization,'' \emph{IEEE Transactions on control systems technology}, vol.~18, no.~2, pp. 267--278, 2009.

\bibitem{beck2017first}
A.~Beck, \emph{{First-order methods in optimization}}.\hskip 1em plus 0.5em minus 0.4em\relax SIAM, 2017.

\bibitem{bauschke2017convex}
H.~H. Bauschke, P.~L. Combettes, H.~H. Bauschke, and P.~L. Combettes, \emph{{Convex Analysis and Monotone Operator Theory in Hilbert Spaces}}.\hskip 1em plus 0.5em minus 0.4em\relax Springer, 2017.

\bibitem{coste2000introduction}
M.~Coste, ``{An introduction to semialgebraic geometry},'' 2000.

\bibitem{aboudonia2020distributed}
A.~Aboudonia, A.~Eichler, and J.~Lygeros, ``{Distributed model predictive control with asymmetric adaptive terminal sets for the regulation of large-scale systems},'' \emph{IFAC-PapersOnLine}, vol.~53, no.~2, pp. 6899--6904, 2020.

\bibitem{kerrigan2000soft}
E.~C. Kerrigan and J.~M. Maciejowski, ``{Soft constraints and exact penalty functions in model predictive control},'' in \emph{UKACC International Conference (Control 2000), Cambridge}, 2000.

\bibitem{burke1991exact}
J.~V. Burke, ``{An exact penalization viewpoint of constrained optimization},'' \emph{SIAM Journal on control and optimization}, vol.~29, no.~4, pp. 968--998, 1991.

\bibitem{andersson2019casadi}
J.~A. Andersson, J.~Gillis, G.~Horn, J.~B. Rawlings, and M.~Diehl, ``{CasADi: a software framework for nonlinear optimization and optimal control},'' \emph{Mathematical Programming Computation}, vol.~11, pp. 1--36, 2019.

\bibitem{arnstrom2022dual}
D.~Arnstr{\"o}m, A.~Bemporad, and D.~Axehill, ``A dual active-set solver for embedded quadratic programming using recursive ldl updates,'' \emph{IEEE Transactions on Automatic Control}, vol.~67, no.~8, pp. 4362--4369, 2022.

\bibitem{guemghar2002predictive}
K.~Guemghar, B.~Srinivasan, P.~Mullhaupt, and D.~Bonvin, ``{Predictive control of fast unstable and nonminimum-phase nonlinear systems},'' in \emph{Proceedings of the 2002 American Control Conference (IEEE Cat. No. CH37301)}, vol.~6.\hskip 1em plus 0.5em minus 0.4em\relax IEEE, 2002, pp. 4764--4769.

\bibitem{biegler2009large}
L.~T. Biegler and V.~M. Zavala, ``{Large-scale nonlinear programming using IPOPT: An integrating framework for enterprise-wide dynamic optimization},'' \emph{Computers \& Chemical Engineering}, vol.~33, no.~3, pp. 575--582, 2009.

\bibitem{verschueren2022acados}
R.~Verschueren, G.~Frison, D.~Kouzoupis, J.~Frey, N.~v. Duijkeren, A.~Zanelli, B.~Novoselnik, T.~Albin, R.~Quirynen, and M.~Diehl, ``acados—a modular open-source framework for fast embedded optimal control,'' \emph{Mathematical Programming Computation}, vol.~14, no.~1, pp. 147--183, 2022.

\bibitem{wachsmuth2013licq}
G.~Wachsmuth, ``{On LICQ and the uniqueness of Lagrange multipliers},'' \emph{Operations Research Letters}, vol.~41, no.~1, pp. 78--80, 2013.

\end{thebibliography}
\ifBio
\vskip -2\baselineskip plus -1fil

\begin{IEEEbiography}[{\vspace*{-0.5cm}\includegraphics[height=1in,clip]{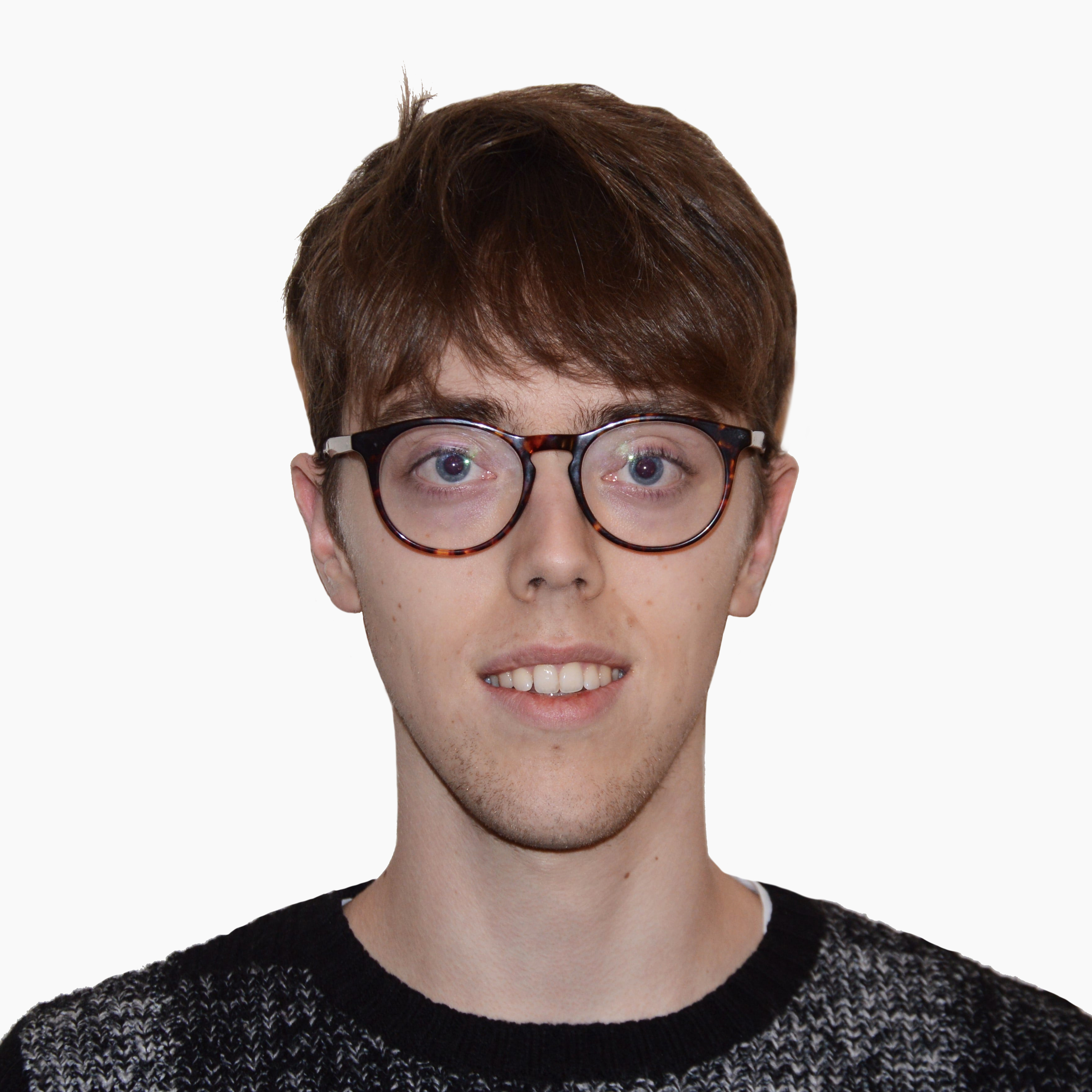}}]{Riccardo Zuliani} received the B.S. degree in mechatronics from the University of Padua (Italy), and the M.S. degree in robotics, systems, and control from ETH Zürich. He is currently pursuing his PhD at the Automatic Control Laboratory (IfA), ETH Zürich. His research interests include model predictive control, differentiable optimization, iterative learning control, and additive manufacturing.
\end{IEEEbiography}

\vskip -2\baselineskip plus -1fil

\begin{IEEEbiography}[{\includegraphics[width=1in,height=1.25in,clip,keepaspectratio]{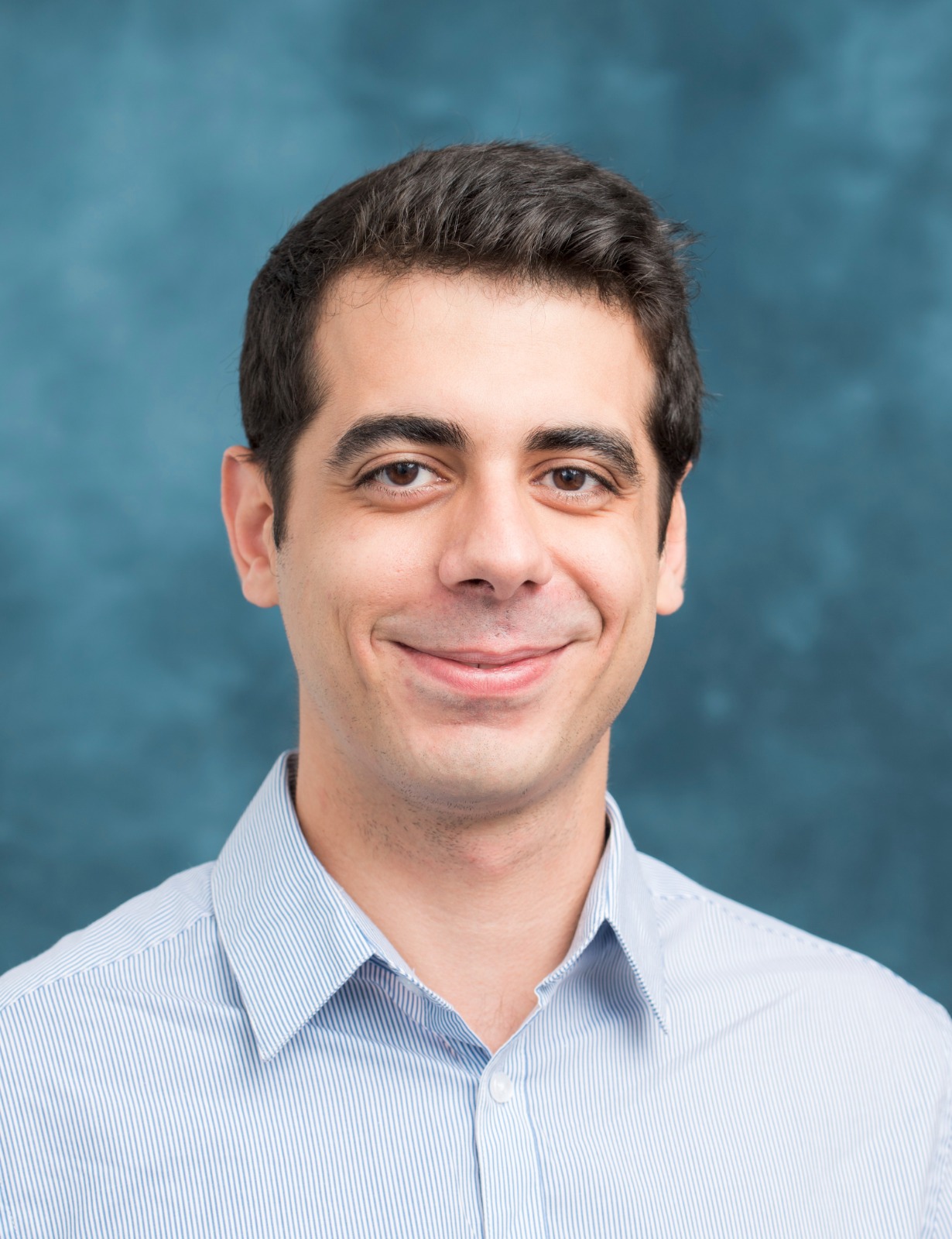}}]{Efe C. Balta} received the B.S.
degree in manufacturing engineering from the Faculty of Mechanical Engineering, Istanbul Technical University, in 2016, and the M.S. and Ph.D. degrees in mechanical engineering from the University of Michigan, Ann Arbor, MI, USA, in 2018 and 2021, respectively. He was a Post-Doctoral Researcher with the Automatic Control Laboratory (IfA), ETH Zürich between 2021 and 2023. Since September 2023, he has been leading the Control and Automation research group at inspire AG. His research interests include control theory, optimization, statistical learning, robotics, cyber-physical systems, and additive manufacturing.
\end{IEEEbiography}

\vskip -2\baselineskip plus -1fil

\begin{IEEEbiography}[{\includegraphics[width=1.05in,clip]{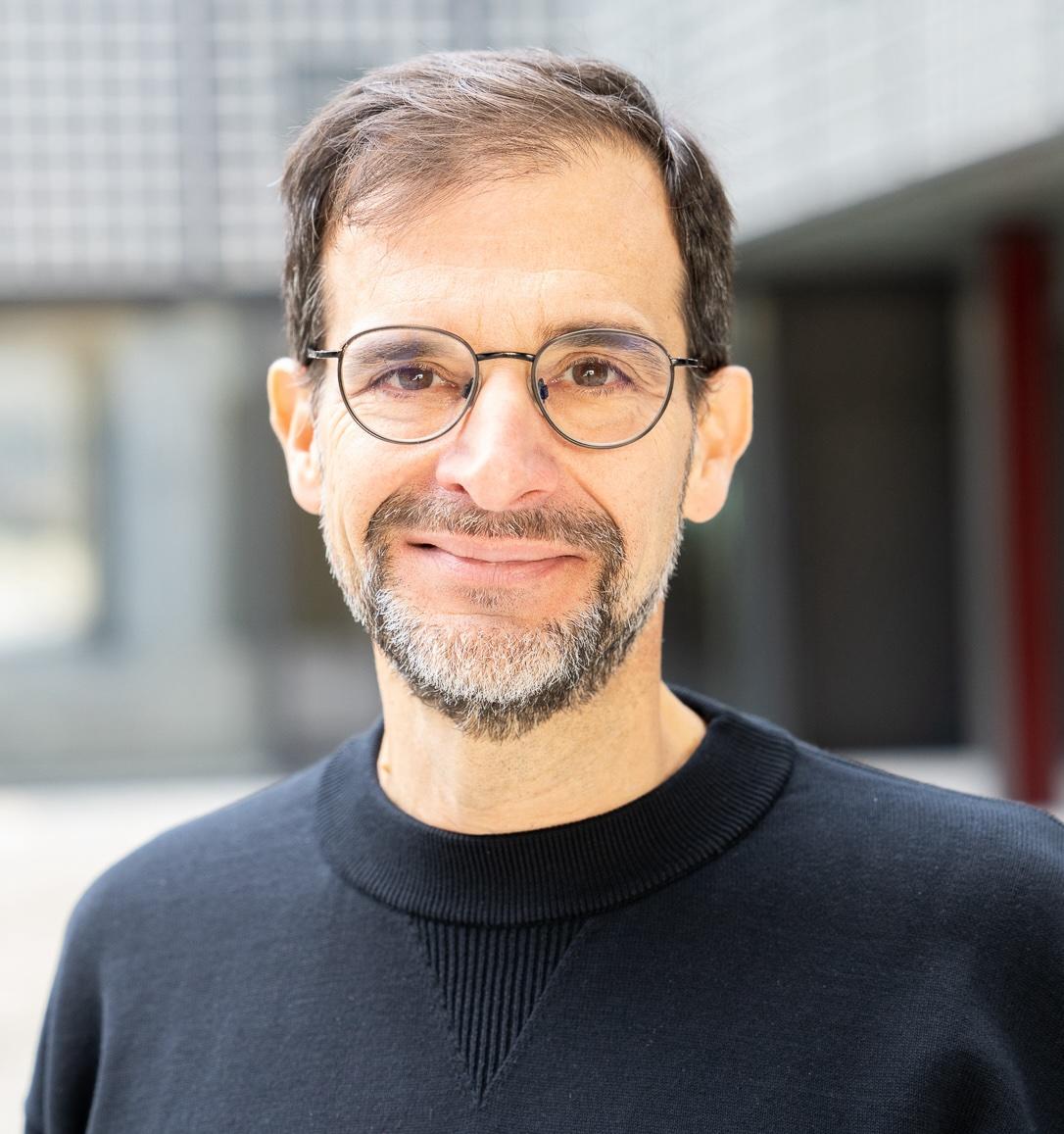}}]{John Lygeros} completed a B.Eng. degree in electrical engineering in 1990 and an M.Sc. degree in Systems Control in 1991, both at Imperial College of Science Technology and Medicine, London, U.K.. In 1996 he obtained a Ph.D. degree from the Electrical Engineering and Computer Science Department, University of California, Berkeley. During the period 1996-2000 he held research appointments at the National Automated Highway Systems Consortium, Berkeley, the Laboratory for Computer Science, M.I.T., and the Electrical Engineering and Computer Science Department at U.C. Berkeley. Between 2000 and 2003 he was a University Lecturer at the Department of Engineering, University of Cambridge, U.K., and a Fellow of Churchill College. Between 2003 and 2006 he was an Assistant Professor at the department of Electrical and Computer Engineering, University of Patras, Greece. In July 2006 he joined the Automatic Control Laboratory at ETH Zurich, where he is currently serving as the Head of the laboratory. His research interests include modelling, analysis, and control of hierarchical, hybrid, and stochastic systems, with applications to biochemical networks, transportation systems, energy systems, and industrial processes. John Lygeros is a Fellow of the IEEE, and a member of the IET and the Technical Chamber of Greece; between 2013 and 2023 he served as the Vice President for Finances and a Council Member of the international Federation of Automatic Control (IFAC), as well as on the Board of the IFAC Foundation.
\end{IEEEbiography}
\fi
\end{document}